	\newtheorem{theorem}{Theorem}
    \newtheorem{problem}[theorem]{Problem}
	\newtheorem{lemma}[theorem]{Lemma}
	\newtheorem{corollary}[theorem]{Corollary}
	\newtheorem{definition}[theorem]{Definition}
    \newtheorem{property}[theorem]{Property}
\DeclareFontFamily{T1}{calligra}{}
\DeclareFontShape{T1}{calligra}{m}{n}{<->s*[1.44]callig15}{}
\DeclareMathAlphabet\mathcalligra   {T1}{calligra} {m} {n}
\DeclareMathAlphabet\mathzapf       {T1}{pzc} {mb} {it}
\DeclareMathAlphabet\mathchorus     {T1}{qzc} {m} {n}
\DeclareMathAlphabet\mathrsfso      {U}{rsfso}{m}{n}
\newcommand{\floor}[1]{\lfloor #1 \rfloor}
\newcommand*{\equal}{=}
\newcommand{\convexpath}[2]{
  [   
  create hullcoords/.code={
    \global\edef\namelist{#1}
    \foreach [count=\counter] \nodename in \namelist {
      \global\edef\numberofnodes{\counter}
      \coordinate (hullcoord\counter) at (\nodename);
    }
    \coordinate (hullcoord0) at (hullcoord\numberofnodes);
    \pgfmathtruncatemacro\lastnumber{\numberofnodes+1}
    \coordinate (hullcoord\lastnumber) at (hullcoord1);
  },
  create hullcoords
  ]
  ($(hullcoord1)!#2!-90:(hullcoord0)$)
  \foreach [
  evaluate=\currentnode as \previousnode using \currentnode-1,
  evaluate=\currentnode as \nextnode using \currentnode+1
  ] \currentnode in {1,...,\numberofnodes} {
    let \p1 = ($(hullcoord\currentnode) - (hullcoord\previousnode)$),
    \n1 = {atan2(\y1,\x1) + 90},
    \p2 = ($(hullcoord\nextnode) - (hullcoord\currentnode)$),
    \n2 = {atan2(\y2,\x2) + 90},
    \n{delta} = {mod(\n2-\n1,360) - 360}
    in 
    {arc [start angle=\n1, delta angle=\n{delta}, radius=#2]}
    -- ($(hullcoord\nextnode)!#2!-90:(hullcoord\currentnode)$) 
  }
}
\newcolumntype{C}[1]{>{\centering\let\newline\\\arraybackslash\hspace{0pt}}m{#1}}
\newcolumntype{L}[1]{>{\raggedright\let\newline\\\arraybackslash\hspace{0pt}}m{#1}}
\newcolumntype{R}[1]{>{\raggedleft\let\newline\\\arraybackslash\hspace{0pt}}m{#1}}
\def\myAbovefromStoT@overarrow@#1#2#3{\vbox{\ialign{##\crcr{\scriptsize{$s$}}#1#2{\scriptsize$t$}\crcr\noalign{\nointerlineskip}$\m@th \hfil #2#3\hfil $\crcr}}}
\newcommand{\myOverArrowFromStoT}{%
  \mathpalette{\small\myAbovefromStoT@overarrow@\vectfill@}}
\def\vectfill@{\arrowfill@\relbar\relbar{\raisebox{-3.81pt}[\p@][\p@]{$\mathord\mathchar"017E$}}}
\def\myAbovefromTtoS@overarrow@#1#2#3{\vbox{\ialign{##\crcr{\scriptsize{$t$}}#1#2{\scriptsize$s$}\crcr\noalign{\nointerlineskip}$\m@th \hfil #2#3\hfil $\crcr}}}
\newcommand{\myOverArrowFromTtoS}{%
  \mathpalette{\small\myAbovefromTtoS@overarrow@\vectfill@}}
\def\vectfill@{\arrowfill@\relbar\relbar{\raisebox{-3.81pt}[\p@][\p@]{$\mathord\mathchar"017E$}}}
\def\myAbovefromStoI@overarrow@#1#2#3{\vbox{\ialign{##\crcr{\scriptsize{$s$}}#1#2{\scriptsize$t^\prime$}\crcr \noalign{\nointerlineskip}$\m@th \hfil #2#3\hfil $\crcr}}}
\newcommand{\myOverArrowFromStoI}{%
  \mathpalette {\small\myAbovefromStoI@overarrow@\vectfill@}}
\def\vectfill@{\arrowfill@\relbar\relbar{\raisebox{-3.81pt}[\p@][\p@]{$\mathord\mathchar"017E$}}}
\def\myAbovefromItoT@overarrow@#1#2#3{\vbox{\ialign{##\crcr{\scriptsize{$t^\prime$}}#1#2{\scriptsize{$t$}}\crcr \noalign{\nointerlineskip}$\m@th \hfil #2#3\hfil $\crcr}}}
\newcommand{\myOverArrowFromItoT}{%
  \mathpalette {\small\myAbovefromItoT@overarrow@\vectfill@}}
\def\vectfill@{\arrowfill@\relbar\relbar{\raisebox{-3.81pt}[\p@][\p@]{$\mathord\mathchar"017E$}}}
\def\mySimpleAbove@overarrow@#1#2#3{\vbox{\ialign{##\crcr{\scriptsize{}}#1#2{\scriptsize{}}\crcr \noalign{\nointerlineskip}$\m@th \hfil #2#3\hfil $\crcr}}}
\newcommand{\mySimpleOverArrow}{%
  \protect\mathpalette {\small\mySimpleAbove@overarrow@\vectfill@}}
\def\vectfill@{\arrowfill@\relbar\relbar{\raisebox{-3.81pt}[\p@][\p@]{$\mathord\mathchar"017E$}}}
\newcommand{\myOverArrow}[4]
{
  \ifnum`#2=115 
    \ifnum`#3=116 
      \ifx&#4&
        \protect\myOverArrowFromStoT{#1}
      \else
        \protect\myOverArrowFromStoT{#1}_{\hspace{-0.1cm}#4}
      \fi
    \else
      \ifx&#4&
        \protect\myOverArrowFromStoI{#1}
      \else
        \protect\myOverArrowFromStoI{#1}_{\hspace{-0.1cm}#4}
      \fi
    \fi
  \else
    \ifnum`#2=116 
        \ifx&#4&
          \protect\myOverArrowFromTtoS{#1}
        \else
          \protect\myOverArrowFromTtoS{#1}_{\hspace{-0.1cm}#4}
        \fi
    \else
        \ifx&#4&
          \protect\myOverArrowFromItoT{#1}
        \else
          \protect\myOverArrowFromItoT{#1}_{\hspace{-0.1cm}#4}
        \fi
    \fi
  \fi
}
\renewcommand{\vec}[1]{\mathbf{#1}}
\title{Efficient Enumeration of the Optimal Solutions to the Correlation Clustering problem}
\author{Nejat Ar\i n\i k, Rosa Figueiredo \& Vincent Labatut}
\setlist{nolistsep}
\begin{document}
\maketitle
\sloppy

\abstract{According to the \textit{structural balance} theory, a signed graph is considered structurally balanced when it can be partitioned into a number of modules such that positive and negative edges are respectively located inside and between the modules. In practice, real-world networks are rarely structurally balanced, though. In this case, one may want to measure the \textit{magnitude} of their imbalance, and to identify the set of edges causing this imbalance. The correlation clustering (CC) problem precisely consists in looking for the signed graph partition having the least imbalance. Recently, it has been shown that the space of the optimal solutions of the CC problem can be constituted of numerous and diverse optimal solutions. Yet, this space is difficult to explore, as the CC problem is NP-hard, and exact approaches do not scale well even when looking for a \textit{single} optimal solution. To alleviate this issue, in this work we propose an efficient enumeration method allowing to retrieve the complete space of optimal solutions of the CC problem. It combines an exhaustive enumeration strategy with neighborhoods of varying sizes, to achieve computational effectiveness. Results obtained for middle-sized networks confirm the usefulness of our method.}

\textbf{Keywords:} Correlation Clustering, Enumeration Strategy, Local Search, Optimal Solution Space, Signed Graph, Structural Balance.


\textcolor{red}{\textbf{Cite as:} N. Ar\i n\i k, R. Figueiredo \& V. Labatut. Efficient Enumeration of the Optimal Solutions to the Correlation Clustering problem. Journal of Global Optimization, 2023 (forthcoming)}

\section{Introduction}
\label{sec:Intro}

Many real-world social systems involve antagonistic relations, a feature that can be directly modeled through signed networks, which contain both positive and negative edges. Alternatively, in some cases, signed edges can be extracted from originally unsigned relations~\cite{Gursoy2020}. The exact semantic of these edges depends on the considered system: friendship/foe~\cite{Kunegis2009a} and trust/distrust~\cite{Massa2005} in social networks, inhibition/activation in biological networks~\cite{DasGupta2007,Tan2016a}, agreement/disagreement in political vote networks~\cite{Doreian2013,Arinik2017,Arinik2020} and so on.

Determining the structural balance of a signed network is a key aspect in the investigation of the structure of polarized relations. According to the \textit{structural balance} theory, a signed graph is considered to be balanced if it can be partitioned into two~\cite{Cartwright1956} or more~\cite{Davis1967} modules (i.e. clusters), such that all positive edges are located inside these modules, whereas negative ones lie between them.

However, it is very rare for a real-world network to be \textit{perfectly} balanced, in which case one wants to assess the \textit{magnitude} and the \textit{causes} of the imbalance. For a given partition, this imbalance is traditionally measured by counting the number of \textit{frustrated edges}~\cite{Cartwright1956,Zaslavsky1987}, i.e. positive edges located in-between modules and negative ones located inside them. Computing the graph imbalance amounts to identifying the partition corresponding to the lowest imbalance measure over the space of all possible partitions. The optimal solution found exhibits the source of the imbalance, i.e., the frustrated edges. This minimization problem is known as the \textit{correlation clustering} (CC) problem, proven to be NP-hard~\cite{Bansal2002}.

When solving an instance of the CC problem, the standard approach in the literature is to find a \textit{single} solution and focus the rest of the analysis on it, as if it were the \textit{only} optimal solution~\cite{Figueiredo2013b,Queiroga2021}. Yet, as shown empirically, it is possible that several, and even many, alternative optimal solutions exist for the considered instance~\cite{Camm1996,Brusco2010a,Arinik2020b}. All these alternate solutions are equally relevant in terms of the objective function of the CC problem. Yet, they can be very different, in terms of how they partition the graph. One can then wonder how many solutions shape the space of optimal solutions, and if many, how different/diverse they are. 
From an application point of view, these are important questions to answer in order to identify how many of these solutions are more suitable to the situation at hand.


In order to answer these questions, we recently conducted an empirical study in our previous work~\cite{Arinik2020b}. Therein, we first enumerated the optimal solutions of a collection of instances of the CC problem through Integer Linear Programming (ILP) modeling~\cite{Demaine2006}. We applied the best state-of-the-art optimal solution enumeration method, as proposed by Danna \textit{et al.}~\cite{Danna2007} and incorporated in the commercial solver CPLEX~\cite{cplex12}. Then, we studied the obtained spaces of optimal solutions through complex network analysis. In that work, we found out that slightly imbalanced networks tend to have fewer and structurally more similar optimal solutions forming a single solution class, whereas a higher imbalance leads to many diverse solutions possibly forming multiple classes of solutions. Consequently, we concluded that it is of great importance that the decision-maker generates the full set, or as many optimal solutions as possible, for a given instance.

In such an analysis, guaranteeing the completeness of the space of optimal solutions requires employing exact approaches. However, it is well known that, due to their NP-hard nature, generic exact approaches able to solve most clustering problems (including the CC problem) are very costly in terms of execution time and do not scale well, even when looking for a \textit{single} optimal solution~\cite{Grotschel1989}. In that respect, the enumeration method by Danna \textit{et al.} is the best state-of-the-art enumeration method able to handle the CC problem so far. However, as it is a generic method designed to handle any combinatorial problem formulated in ILP, it cannot take full advantage of the problem knowledge. Consequently, the maximum number of vertices (i.e. graph order) that we could handle with their method in our previous work 
was only $36$. Moreover, the execution time of the enumeration process for such small graphs was very long (typically more than one day). This highlights the need for an efficient optimal space enumeration method for the CC problem.

In this work, we aim to obtain this efficiency by putting the problem knowledge into the enumeration process. Based on the high similarity observed empirically between optimal solutions belonging to the same class of solutions, it integrates into an exact approach a local search mechanism, which relies on the use of neighborhoods of different sizes. In order to accelerate the proposed enumeration method, we develop pruning strategies based on optimal conditions satisfied by partial solutions. Our main contribution is the combination of local search and pruning strategies, which gives rise to an efficient enumeration method. We present extensive computational experiments established for the proposed method, relying on sparse and dense signed networks.

The rest of the article is organized as follows. In Section~\ref{sec:RelatedWork}, we review the literature related to the exact enumeration of all optimal solutions for the CC problem. In Section~\ref{sec:CCproblem}, we give the formal definition of the CC problem. Next, we introduce our enumeration method in Section~\ref{sec:EnumarationCCspace}, and detail our complete neighborhood search and pruning strategies in Section~\ref{sec:RecurrentNeighborhoodSearch} and~\ref{sec:PruningStrategies}, respectively. We put the proposed method into practice on a selection of partially and fully connected signed networks in Section~\ref{sec:Dataset} and discuss our results in Section~\ref{sec:Results}. Finally, we review our main findings in Section~\ref{sec:Conclusion}, and identify some perspectives for our work.

\section{Related Work}
\label{sec:RelatedWork}
There are several methods proposed in the literature to solve the CC problem exactly~\cite{Demaine2006,Figueiredo2013b,Keuper2020}, however very few were designed to enumerate the whole space of its optimal solutions. In fact, very few methods were proposed to enumerate all the optimal solutions of \textit{any} combinatorial problem (e.g., \cite{Arthur1997} for maximal covering problem). In this section, we review the existing optimal solution enumeration methods for the CC problem and for related problems.

The literature provides two main approaches to enumerate all optimal solutions of a combinatorial problem: \textit{Branch-and-Bound} and \textit{Fixed-Parameter Enumeration}. 
In the case of the CC problem, most works focus on the former.

\paragraph{Branch-and-Bound Approach} The enumeration of all optimal solutions of the CC problem (like for any combinatorial problem) through a branch-and-bound method can be performed through two algorithmic methods: Integer Linear Programming (ILP) vs. \textit{ad hoc} branch-and-bound programming. Their main difference is that the former can tackle any problem translated in the language of mathematical programming through any industrial optimization solver, whereas in the latter the construction of the branch-and-bound tree relies on problem-specific idiosyncrasies. 

Based on how the branch-and-bound tree is used, the ILP approach can be implemented in two different ways. The first one relies on a simple sequential process: a slightly modified version of the original problem is solved as many times as there are solutions in the optimal solution space (like in~\cite{Arthur1997} for the maximal covering problem). At each iteration, already-found optimal solutions are sequentially added as constraints into the mathematical model, in order to exclude them during future iterations. However, the drawback of this approach is that a branch-and-bound tree needs to be built from scratch for each new optimal solution found. 
The second type of ILP implementation is called \textit{OneTree}, and was proposed by Danna \textit{et al.}~\cite{Danna2007}. Instead of the previous simple sequential approach, it relies on a more efficient two-step method that allows reducing the computational effort. As its name suggests, it constructs a single branch-and-bound tree. It first builds and explores the search tree in order to find efficiently the first optimal solution, and then enumerate all the alternative optimal solutions based on the same tree. This method is available in the industrial optimization solver CPLEX~\cite{cplex12}, and we explain in Section~\ref{subsec:enumeratingAllOptSol} how we use it for the CC problem. In our previous work~\cite{Arinik2020b}, we used this approach for generating the optimal solution space of 
complete random graphs up to $36$ vertices.

An \textit{ad hoc} B\&B programming method constructs the branch-and-bound tree differently from what we described before. For a given clustering problem, these methods  systematically construct partial solutions by assigning the vertices to one of the existing modules. This produces a search tree, whose branches correspond to assignments of vertices to modules, and whose nodes correspond to partial assignments. Similar to Danna \textit{et al.}~\cite{Danna2007}, Brusco and Steinly~\cite{Brusco2010a} propose a two-step method for generating all optimal CC solutions. They first identify the optimal objective function value by finding an optimal solution, then use the optimal value as input when building another branch-and-bound tree from scratch. As shown by Figueiredo \& Moura~\cite{Figueiredo2013b}, 
this method does not deal well with finding a single optimal solution for graphs with more than 20 vertices.

\paragraph{Fixed-Parameter Enumeration Approach}
This is also an \textit{ad hoc} method which
requires to transform a part of the considered problem into a new input parameter. This in turn guarantees to bound the overall running time as a function of an input parameter. Assuming that this input parameter is small, the parameterized enumeration is efficient in practice. Damaschke~\cite{Damaschke2010} proposes an FPT (Fixed-Parameter Tractable) algorithm to enumerate all optimal solutions in a given graph for the \textit{Cluster Editing} problem, which is equivalent to the CC problem when the input graph is complete and unweighted. Concretely, this FPT algorithm makes the number of frustrated edges in structural balance parameterized to solve the problem. However, as shown in our previous work~\cite{Arinik2020b}, a drawback of this approach is that the amount of imbalance, the input parameter, can be very large. Furthermore, Damaschke does not provide any computational results in his theoretical work.


\medskip
The computational results presented in the works mentioned in this section identify the ILP branch-and-bound as the more efficient method for exactly solving the CC problem. This is explained by a tightened initial linear relaxation of the ILP formulation and the quality of the cuts used. In this work, we apply an ILP branch-and-bound method for the complete enumeration of the optimal CC solutions, and propose a compromise strategy between the sequential approach and the \textit{OneTree} method from Danna \textit{et al.}~\cite{Danna2007}.

\section{Correlation Clustering Problem}
\label{sec:CCproblem}
Let us now introduce the notations necessary for defining the correlation clustering problem. Let $G=(V,E)$ be an \textit{undirected graph}, where $V$ and $E$ are the sets of vertices and edges, respectively. We note $n=|V|$ and $m=|E|$ the numbers of vertices (i.e. graph order) and edges, respectively. We assume that the graph contains no loops, i.e. edges connecting a vertex to itself. Moreover, $G[S]$ denotes the subgraph induced by vertex subset $S \subset V$.

Consider a function $s : E \rightarrow \{+,-\}$ that assigns a sign to each edge in $E$. An undirected graph $G$ together with a function $s$ is called a \textit{signed graph}, denoted by $G = (V, E, s)$. An edge $(u,v)$ is called negative if $s(u,v) = -$ and positive if $s(u,v) = +$. We note $E^-$ and $E^+$ the sets of negative and positive edges in the signed graph, respectively. Let also define the positive graph $G^+$ of a given signed graph $G$ as the subgraph $(V,E^+, \{+\})$ (i.e. same vertices, but only the positive edges). The entries $a_{uv}$ of the signed adjacency matrix $\textbf{A}$ associated with a signed graph $G$ are defined in Equation~\ref{eq:weight-definition}.
\begin{equation}
    a_{uv} = 
    \begin{cases} 
        1, & \mbox{if $(u,v) \in E^+$,} \\
        -1, & \mbox{if $(u,v) \in E^-$,} \\
        0, & \mbox{otherwise}.
    \end{cases}
    \label{eq:weight-definition}
\end{equation}

Let $P = \{M_1,...,M_\ell\}$ ($1 \leq \ell \leq n$) be an $\ell$-partition of $V$, i.e. a division of $V$ into $\ell$ non-overlapping and non-empty subsets $M_i$ ($1 \leq i \leq \ell$) called \textit{modules}. The partition $P$ is called a \textit{solution} of the CC problem for the given graph $G$. Given a solution $P$, an edge $(u,v)$ is called \textit{internal} if it is located inside a module, i.e., $u$ and $v$ belong to the same module. Likewise, an edge $(u,v)$ is called \textit{external} if it is located between any two modules, i.e., $u$ and $v$ belong to two different modules. Given $\sigma\in\{+,-\}$, we define the set of positive/negative (depending on $\sigma$) edges connecting two modules $M_i, M_j \in P$ as $E^\sigma(M_i,M_j) = \{(u,v)\ |\ (u,v) \in E^\sigma, u \in M_i\ \text{and}\ v \in M_j\}$. We also define $E(M_i,M_j) = E^{-}(M_i,M_j) \cup E^{+}(M_i,M_j)$ as the set of all edges connecting these modules. Similarly, we note $\Omega^\sigma(M_i,M_j) = \sum\limits_{(u,v) \in E^\sigma(M_i,M_j)} a_{uv}$ the \textit{signed} number of positive/negative edges connecting these modules: note that this value can be negative if $\sigma = -$. We also define $\Omega(M_i,M_j) = \Omega^{-}(M_i,M_j) + \Omega^{+}(M_i,M_j)$ as the signed sum of the edges connecting the same modules. It can also be negative, if there are more negative than positive edges between $M_i$ and $M_j$.

The \textit{Imbalance} $I(P)$ of a partition $P$ is defined as the total number of frustrated edges, i.e. the numbers of \textit{positive} edges located \textit{between} modules and of \textit{negative} edges located \textit{inside} them:

\begin{equation}
	I(P) = \sum_{1\leq i<j\leq \ell} \Omega^+(M_i,M_j) - \sum_{1\leq i\leq \ell} \Omega^-(M_i,M_i).
	\label{eq:IP}
\end{equation}

The objective of the CC problem is to minimize the number of frustrated edges. This problem can be formally described as follows.

\begin{problem}[CC problem]
	Given a signed graph $G=(V,E,s)$, the \textit{correlation clustering problem} consists in finding a partition $P$ of $V$ such that the imbalance $I(P)$ is minimized.
\end{problem}

To the best of our knowledge, this $NP$-hard minimization problem appears under this name for the first time in Bansal's paper~\cite{Bansal2002}, although it is addressed before in the literature (e.g.~\cite{Doreian1996}).

\section{Enumeration of Optimal CC Solutions}
\label{sec:EnumarationCCspace}

Any method that enumerates optimal solutions needs to find an initial optimal solution by optimally solving the problem at hand. This is why we first describe an efficient method for finding an optimal solution for the CC problem (Section~\ref{subsec:findingInitialOptSol}), before presenting the methods for identifying an alternative optimal solution (Section~\ref{subsec:findingAlternativeOptSol}) and enumerating all optimal solutions (Section~\ref{subsec:enumeratingAllOptSol}).

\subsection{Finding an Initial Optimal Solution}
\label{subsec:findingInitialOptSol}
As largely illustrated in the literature (e.g. \cite{Demaine2006}, \cite{Figueiredo2013b}), the CC problem can be modeled by means of an ILP proposed for the \textit{Uncapacitated Graph Clustering} problem~\cite{Mehrotra1998}. Note that the model can handle weighted graphs, but we use it on unweighted ones in the context of this article. 

For each pair of vertices $u,v \in V: u < v$, a binary variable is first defined to describe the relative module membership of pairs of vertices, i.e.,
\begin{equation}
    x_{uv} = 
    \begin{cases} 
        1, & \mbox{if $u$ and $v$ are in a same module,} \\
        0, & \mbox{otherwise}.
    \end{cases}
\end{equation}

Then, the ILP formulation of the CC problem for unweighted signed graphs is written as follows:
\begin{align}
    \text{Min} \quad & \sum_{u,v \in V:uv \in E^-} x_{uv} + \sum_{u,v \in V:uv \in E^+} (1- x_{uv}) \label{eq:cc_obj} \\
    \text{s.t.} \quad  & x_{uv} + x_{vr} - x_{ur} \leq 1 ,\quad \forall u<v<r \in V \label{eq:S1_cc2}
    \\
    & x_{uv} - x_{vr} + x_{ur} \leq 1 ,\quad \forall u<v<r \in V \label{eq:S1_cc3}
    \\ 
    & -x_{uv} + x_{vr} + x_{ur} \leq 1 ,\quad \forall u<v<r \in V \label{eq:S1_cc4}
    \\
    & x_{uv} \in \{0,1\} ,\quad \forall u,v \in V. \label{eq:S1_cc5}
\end{align}

The objective function in~\eqref{eq:cc_obj} looks for a feasible solution minimizing the imbalance defined by Equation~\eqref{eq:IP}. A feasible solution, i.e., decision variables corresponding to a partition, must satisfy the triangle inequalities \eqref{eq:S1_cc2}, \eqref{eq:S1_cc3} and \eqref{eq:S1_cc4}. Finally, \eqref{eq:S1_cc5} describes the domain constraints for the variables in the formulation. 

This ILP model is sufficient to find an optimal solution through an optimization solver, but it can be very time-consuming. One way to deal with this issue is to strengthen it through a \textit{cutting plane} approach~\cite{Nemhauser1999}. We use the 2-partition and the 2-chorded cycle valid inequalities as described by Grötschel and Wakabayashi in~\cite{Grotschel1990} (see Appendix~\ref{secapx:2Partition2ChordedCycleIneqs} for more details). In the cutting plane approach, we add these two tight valid inequalities (only) during the root relaxation phase as described by Ales \textit{et al.}~\cite{Ales2016a}, before proceeding to the construction of the branch-and-bound search tree. As reported in~\cite{Ales2016a} for a similar clustering problem, we also verified that adopting this cut strategy improves the overall processing time. There are other inequalities in the literature that can further tighten the LP relaxation for the CC problem. These are the \textit{$2$-chorded even wheel}~\cite{Grotschel1990} inequality and those defined in \cite{Oosten2001}. Nevertheless, in practice, together the 2-partition and the 2-chorded cycle inequalities describe a tight linear relaxation~\cite{Grotschel1990}.

Throughout this work, we denote $ILP(G)$ as the formulation defined by Equations~\eqref{eq:cc_obj}--\eqref{eq:S1_cc5} and \textit{B\&C(ILP(G))} as the branch-and-cut procedure described above. Also, we denote $ILP{+}(G)$ as the strengthened ILP obtained after executing $B\&C(ILP(G))$, i.e., the formulation obtained by adding to $ILP(G)$ all the cuts generated by $B\&C(ILP(G))$.
Finally, let $B\&B(ILP{+}(G))$ be the branch-and-bound procedure based on $ILP{+}(G)$. In the following, for the sake of simplicity, the term \textit{solution} refers to a graph partition, as well as a feasible solution to the formulation $ILP{+}(G)$.

\subsection{Finding an Alternative Optimal Solution}
\label{subsec:findingAlternativeOptSol}
An enumeration method needs to constantly find an optimal solution that would be different from those identified before, and stored in a set $S$. For this purpose, we need to define an extended model of $ILP{+}(G)$, that we denote $ILP{+}jump(G,S)$. 
 
Consider a partition $P$. Let $\vec{X^p} \in \{0,1\}^{n\times n}$ be the representative matrix of $P$ defined as: $x^p_{ij}=1$ if $i$ and $j$ belong to the same module in $P$; and $x^p_{ij}=0$ otherwise. $ILP{+}jump(G,S)$ includes the set of constraints defined in Equation~\eqref{eq:alternative_constr} on top of $ILP{+}(G)$:

\begin{equation}
    \label{eq:alternative_constr}
    \sum_{\substack{u,v \in V: u < v}} | x^p_{uv} - x_{uv} | > 0, \forall P \in S.
\end{equation}
We refer the reader to \cite{Fischetti2005} for the implementation details of these constraints.

We see that this extended formulation allows us to find an alternative optimal solution other than the ones in $S$. Furthermore, given an optimal solution $P \in S$, to ensure that this alternative solution has the same objective value as $I(P)$, we add the objective function in~\eqref{eq:cc_obj} as a constraint, as shown in Equation~\eqref{eq:optimality_constr}. 

\begin{equation}
    \label{eq:optimality_constr}
    \sum_{u,v \in V:uv \in E^-} x_{uv} + \sum_{u,v \in V:uv \in E^+} (1- x_{uv}) \leq I(P).
\end{equation}

Equation~\eqref{eq:alternative_constr} along with Equation~\eqref{eq:optimality_constr} ensures that each feasible solution to $ILP{+}jump(S)$ is an optimal solution to the original problem. Finally, we denote the corresponding branch-and-bound procedure based on formulation $ILP{+}jump(G,S)$ by $B\&B(ILP{+}jump(G,S))$.

\subsection{Enumerating All Optimal Solutions}
\label{subsec:enumeratingAllOptSol}

As we have mentioned before, the CC problem can have multiple optimal solutions. In this section, we are interested in methods enumerating all optimal solutions of the CC problem for a given signed graph $G$. In the following, we first present $OneTreeCC$~\cite{Danna2007}, which is the best general method to enumerate solutions available in the literature. It is incorporated in CPLEX~\cite{cplex12}, and we apply it to the formulation $ILP{+}(G)$. Then, we introduce our \textit{ad hoc} method $EnumCC$, in the aim of obtaining a faster method able to handle relatively larger graphs.

We start with $OneTreeCC(G)$, which takes as input a signed graph $G$. The sketch of this two-step method is shown in Algorithm~\ref{algo:OneTreeCC}. In the first step (line~\ref{algoline:firstOptSol_OneTreeCC}), it finds an initial optimal solution based on $ILP{+}(G)$. We note $T_{B\&B}$ the branch-and-bound search tree constructed during this step. The search tree as well as the nodes considered at this first step are stored for further examination during the second step. Moreover, Danna \textit{et al.} completely turn off dual tightening in the branch-and-bound procedure in order to guarantee exhaustive enumeration. According to the authors, this fact can have a negative impact on performance, in terms of computational time. In the second step (line~\ref{algoline:otherOptSols_OneTreeCC}), $OneTreeCC(G)$ enumerates the set $S$ of all alternative optimal solutions by expanding the search tree $T_{B\&B}$ until obtaining the complete set of all optimal solutions.

\begin{algorithm}[!ht]
\SetAlgoLined
\KwResult{The set of all optimal solutions $S$}
$S = \emptyset$ \;
\tcc{1\textsuperscript{st} step}
Solve $B\&B(ILP{+}(G))$ to obtain an optimal solution $P$, and let $T_{B\&B}$ be the constructed branch-and-bound search tree \; \label{algoline:firstOptSol_OneTreeCC}

\tcc{2\textsuperscript{nd} step}
Expand fathomed nodes in $T_{B\&B}$ until enumerating the set $S$ of all alternative optimal solutions. \; \label{algoline:otherOptSols_OneTreeCC}
\caption{$OneTreeCC(G)$}
\label{algo:OneTreeCC}
\end{algorithm}

To compete with $OneTreeCC(G)$, we propose a new method for the CC problem in order to completely enumerate the optimal partitions of a given signed graph. We call it $EnumCC(G,r_{max})$, and it takes as input a signed graph $G$ and a maximum distance parameter $r_{max}$. As shown in Algorithm~\ref{algo:EnumCC}, it can be viewed as an improved version of the sequential approach proposed by Arthur \textit{et al.}~\cite{Arthur1997} for the Maximal Covering problem (described in Section~\ref{sec:RelatedWork}). In the first step (line~\ref{algoline:firstOptSol_EnumCC}), $EnumCC(G,r_{max})$ obtains an initial optimal solution thanks to $B\&B(ILP{+}(G))$. In the second step, instead of directly "\textit{jumping}" onto undiscovered optimal solutions one by one through $ILP{+}jump(G,S)$ (like in the sequential approach), we slightly change this process. Our method first discovers the recurrent neighborhood $RN_{\leq r_{max}}(P)$ of the current optimal solution $P$ (line~\ref{algoline:RN_EnumCC}), with the hope of discovering new optimal solutions. The recurrent neighborhood $RN_{\leq r_{max}}(P)$ of an optimal solution $P$, is the set of optimal solutions reached directly or indirectly from $P$, depending on the maximum distance parameter $r_{max}$. The complete description of the Recurrent Neighborhood Search (RNS) is given in Section~\ref{sec:RecurrentNeighborhoodSearch}. Whether a new solution is found or not through $RNS(G,P,r_{max})$, the method then moves on to the next step, consisting in jumping onto a new solution $P$ (line~\ref{algoline:jump_EnumCC}). These RNS and jumping phases are repeated, until all optimal solutions are discovered.

Clearly, $EnumCC(G,r_{max})$ can only improve the sequential approach as used in \cite{Arthur1997}, provided that $RNS$ discovers at least one new solution and that its execution time is shorter than that of $B\&B(ILP{+}jump(G,S))$. Furthermore, the choice of $r_{max}$ is sensitive: it does not contribute much to the method when it is small, whereas the method becomes slower than the sequential approach when $r_{max}$ is large.

\begin{algorithm}[!ht]
\SetAlgoLined
\KwResult{The set of all optimal solutions $S$}
$S = \emptyset$ \;
\tcc{1\textsuperscript{st} step}
Solve $B\&B(ILP{+}(G))$ to obtain an optimal solution $P$ \; \label{algoline:firstOptSol_EnumCC}
\tcc{2\textsuperscript{nd} step}
\While{$P$ $\neq$ null}{
    \tcc{step 2.1}
    Apply $RNS(G,P,r_{max})$ to generate the recurrent neighborhood $RN_{\leq r_{max}}(P)$ of $P$ \; \label{algoline:RN_EnumCC}
    $S = S \bigcup RN_{\leq r_{max}}(P)$ \;
    \tcc{step 2.2}
    Jump onto an undiscovered optimal solution $P$, with $P \notin S$, through $B\&B(ILP{+}jump(G,S))$ // if not possible, then $P = null$\; \label{algoline:jump_EnumCC}
}
\caption{$EnumCC(G,r_{max})$}
\label{algo:EnumCC}
\end{algorithm}

The jumping phase in step 2.2 of $EnumCC(G,r_{max})$ can be time-consuming for two reasons. First, even finding an alternative solution can be costly. Second, the number of jumps, denoted by the notation $n_{jump}(.)$ (here, $n_{jump}(EnumCC(G,r_{max}))$), can be very large, despite the use of the neighborhood $RN_{\leq r_{max}}(P)$. Nonetheless, we expect to save time and compensate the time spent in the jumping phase thanks to $RN_{\leq r_{max}}(P)$. 

In the rest of this work, we leave out the common parameter $G$ from the notations of the mentioned methods and ILP models, for the sake of convenience: OneTreeCC(), $EnumCC(r_{max})$ and $ILP{+}jump(S)$. 

In the next section, we detail the recurrent neighborhood search $RNS$ used in Algorithm~\ref{algo:EnumCC}.

\section{Recurrent Neighborhood Search (RNS)}
\label{sec:RecurrentNeighborhoodSearch}
\textit{Recurrent Neighborhood Search (RNS)} is the common and undoubtedly the most important part of our method \textit{EnumCC}. As mentioned in the previous section, if we want this method to compete with $OneTreeCC(G)$, \textit{RNS} needs to be efficient and fast.

Before defining the neighborhood of a solution, i.e. of a partition, we need to select a distance function between partitions. In the following, we first present the edit distance (Section~\ref{subsec:ED}), then introduce the algorithmic details of the \textit{Complete Neighborhood Search (CoNS)} used in our Recurrent Neighborhood Search (Section~\ref{subsec:CoNS}). Finally, we explain the main procedure for \textit{Recurrent Neighborhood Search} (Section~\ref{subsec:RNS}).

Let us first introduce some definitions used in the rest of the article. A partition $P$ with $\ell$ modules can be associated with one or more \textit{membership vectors}. Such a vector of size $n$, denoted by $\pi$, defines a function over $V \rightarrow \{ 1, 2, ..., \ell \}$. For a given vertex $u$, $\pi(u)$ denotes the \textit{label} of the module of $P$ that contains $u$, while $M_{\pi(u)}$ denotes the module itself. Finally, given a partition $P$, we define an $r$-neighborhood structure, denoted by $N_r(P)$, as the family of partitions obtained by moving $r$ vertices into different modules of $P$. We also define $N_r(\pi)$ as the family of corresponding membership vectors obtained by moving $r$ vertices.

\subsection{Edit Distance}
\label{subsec:ED}
In Natural Language Processing, and more generally in Computer Science, the edit distance~\cite{Jurafsky2000} 
is defined as the minimum number of edit operations required to transform one string into another (or more precisely, their total cost). In the context of graph partitioning, with fixed vertices and edges, edit operations are used as \textit{neighborhood search operators} in local search heuristics developed for graph problems~\cite{Blum2003,Ma2016,Ahuja2002}. Such an operation consists in moving one or more vertices (called \textit{moving vertices}) from their current modules (called \textit{source} modules) to other ones (called \textit{target} modules). From the perspective of the membership vectors corresponding to the concerned partitions, this transforms a \textit{source} vector into a \textit{target} one. The number of moving vertices constitutes the cost of the edit operation. In this work, we are interested in edit operations whose cost is minimal.
\begin{definition}[Min-edit operation]
    Consider an edit operation transforming a source membership vector into a target membership one. If the set of moving vertices is the minimum set required for this transformation, then it is a \textbf{min-edit operation}. Otherwise, we call it \textbf{non-min-edit operation}.
\end{definition}

Notice that the cost of an edit operation is not always minimal. This is because two membership vectors, i.e. the module labels of two partitions, can be very different, but essentially suggest very similar module assignments for the vertices. The distinction between \textit{min-edit} and \textit{non-min-edit} operations is illustrated in Figure~\ref{fig:IllustrativeExampleEditForNotations}.

Importantly, the \textit{edit distance} between two membership vectors is the cost of the min-edit operation allowing to turn one vector into the other.
The calculation of such distance between two membership vectors can be done in polynomial time by solving an assignment problem (see Appendix~\ref{secapx:CalculatingEditDistance} for more details).

Let us now introduce our notations related to the edit distance.
Let $\boldsymbol\pi^s$ and $\boldsymbol\pi^t$ represent the source and target membership vectors for an edit operation, respectively. They are associated with the source and target partitions $P^s = \{ M_{1}^{s}, M_{2}^{s}, ..., M_{\ell^s}^{s} \}$ and $P^t = \{ M_{1}^{t}, M_{2}^{t}, ..., M_{\ell^t}^{t} \}$, containing $\ell^s$ and $\ell^t$ modules, respectively. Since the edit distance is symmetric, without loss of generality, let $\ell^s \leq \ell^t$. Moreover, we note $\pi^s \rightarrow \pi^t$ an edit operation applied onto $P^s$ to obtain $P^t$, whose set of moving vertices is defined as $\myOverArrow{V}{s}{t}{} = \{u\ |\ \pi^s(u)\neq \pi^t(u)\}$. This means that the remaining vertices, called \textit{non-moving} vertices, do not change modules, hence their module labels are the same in $\pi^s$ and $\pi^t$. The cost of this edit operation is simply the number of vertices whose module has changed. We denote this cost with $cost(\pi^s \rightarrow \pi^t)$ and compute it by taking the cardinality of $\myOverArrow{V}{s}{t}{}$. In the rest of the text, we use $r$ for short whenever we are interested only in the value of $cost(\pi^s \rightarrow \pi^t)$. Also, a \textit{$r$-edit operation} (resp. \textit{min-$r$-edit operation}) is any edit operation (resp. min-edit operation) whose cost equals $r$. Consider an edit operation $\pi^s \rightarrow \pi^t$ whose moving vertices are in set $\myOverArrow{V}{s}{t}{}$. The distinct labels of the source and target modules of a subset $V'\subset V$ of vertices are denoted, respectively, by $\pi^s(V') = \bigcup_{u \in V'} \pi^s(u)$ and $\pi^t(V') = \bigcup_{u \in V'} \pi^t(u)$. Finally, we sometimes need to refer to several modules in an edit operation $\pi^s \rightarrow \pi^t$. For this reason, we also define $M^s_{L} = \bigcup_{l \in L} M^s_l$ for any $L \subseteq \{1, .., \ell^s\}$ in the source partition $P^s$ and $M^t_{L} = \bigcup_{l \in L} M^t_l$ for any $L \subseteq \{1, .., \ell^t\}$ in the target partition $P^t$.

To illustrate the aforementioned concepts, let us consider the example of Figure~\ref{fig:IllustrativeExampleEditForNotations}. As shown in Figure~\ref{fig:IllustrativeExampleEditForNotations}a, $\pi^s = (1,1,2,2,2)$, where $M_{1}^{s} = \{v_1,v_2\}$ and $M_{2}^{s} = \{v_3,v_4,v_5\}$. Let us consider an edit operation $\pi^s \rightarrow \pi^t$ with the moving vertices $\myOverArrow{V}{s}{t}{} = \{v_2,v_4\}$ (Figure~\ref{fig:IllustrativeExampleEditForNotations}b). The edit operation consists in moving $v_2$ into $M^{s}_2$ and $v_4$ into $M^{s}_1$, which produces $\pi^t$. Hence, we have $M^{t}_1 = \Bigl(M^{s}_1 \setminus \{v_2\}\Bigr) \cup \{v_4\}$ and $M^{t}_2 = \Bigl(M^{s}_2 \setminus \{v_4\}\Bigr) \cup \{v_2\}$. The labels of the source and target distinct modules of $\myOverArrow{V}{s}{t}{}$ are $\pi^s(\myOverArrow{V}{s}{t}{}) = \{1,2\}$ and $\pi^t(\myOverArrow{V}{s}{t}{}) = \{1,2\}$, respectively. Also, we have $M^s_{\pi^s(\myOverArrow{V}{s}{t}{})} = M^s_1 \cup M^s_2$ and $M^t_{\pi^t(\myOverArrow{V}{s}{t}{})} = M^t_1 \cup M^t_2$
for this example. The sets $M^s_{\pi^s(\myOverArrow{V}{s}{t}{})}$ and $M^t_{\pi^t(\myOverArrow{V}{s}{t}{})}$ could be different, if some of the moving vertices move into a module $M^s_i$ other than $M^s_{\pi^s(\myOverArrow{V}{s}{t}{})}$.

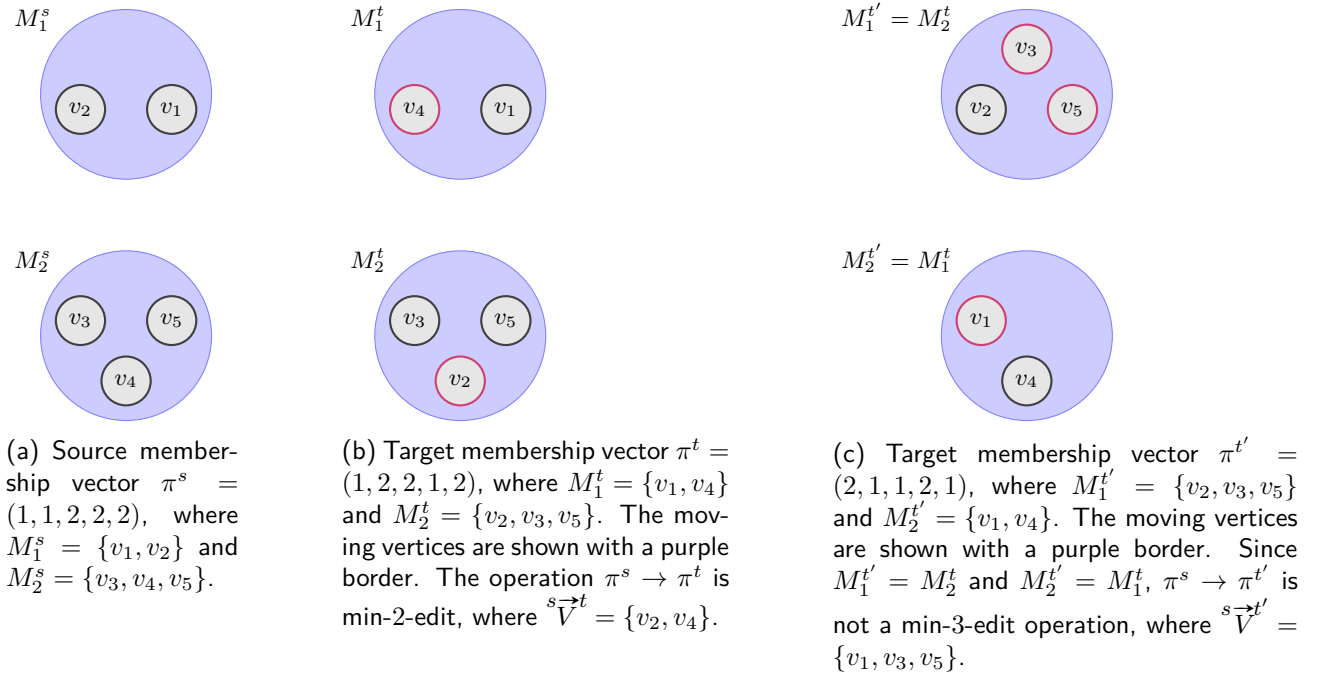
\begin{figure}[!h]
    \centering
    \begin{subfigure}[t]{0.18\linewidth}
    \begin{tikzpicture}[scale=0.80]
        \footnotesize
		\tikzstyle{nn}=[circle,thick,draw=black!75,fill=black!10,minimum size=3mm]
		\tikzstyle{clu}=[circle,draw=blue!50,fill=blue!20,minimum size=2.25cm,inner sep=0pt]
		\tikzstyle{pl}=[thick,draw=green!55!black!75,text=green!55!black!75]
		\tikzstyle{nl}=[thick,draw=red,text=red]
		
        \node (cluster1) at (-2,7.75) [clu, label=140:$M_{1}^{s}$] {};
        \node (cluster2) at (-2,3.75) [clu, label=140:$M_{2}^{s}$] {};
                
		\node[nn] (v1) at (-1.25,7.5) {$v_{1}$};
	    \node[nn] (v2) at (-2.75,7.5) {$v_{2}$};
		\node[nn] (v3) at (-2.75,4) {$v_{3}$};
		\node[nn] (v4) at (-2,3) {$v_{4}$};
		\node[nn] (v5) at (-1.25,4) {$v_{5}$};
		
	\end{tikzpicture}
	\caption{Source membership vector $\pi^s = (1,1,2,2,2)$, where $M_{1}^{s} = \{v_1,v_2\}$ and $M_{2}^{s} = \{v_3,v_4,v_5\}$.}
	\end{subfigure}
	\hfill
	\begin{subfigure}[t]{0.3\linewidth}
    \begin{tikzpicture}[scale=0.80]
        \footnotesize
		\tikzstyle{nn}=[circle,thick,draw=black!75,fill=black!10,minimum size=3mm]
		\tikzstyle{nn2}=[circle,thick,draw=purple!75,fill=black!10,minimum size=3mm]
		\tikzstyle{clu}=[circle,draw=blue!50,fill=blue!20,minimum size=2.25cm,inner sep=0pt]
		\tikzstyle{pl}=[thick,draw=green!55!black!75,text=green!55!black!75]
		\tikzstyle{nl}=[thick,draw=red,text=red]
		
        \node (cluster1) at (-2,7.75) [clu, label=140:$M_{1}^{t}$] {};
        \node (cluster2) at (-2,3.75) [clu, label=140:$M_{2}^{t}$] {};
                
		\node[nn] (v1) at (-1.25,7.5) {$v_{1}$};
	    \node[nn2] (v4) at (-2.75,7.5) {$v_{4}$};
		\node[nn] (v3) at (-2.75,4) {$v_{3}$};
		\node[nn2] (v2) at (-2,3) {$v_{2}$};
		\node[nn] (v5) at (-1.25,4) {$v_{5}$};
		
	\end{tikzpicture}
	\caption{Target membership vector $\pi^t = (1,2,2,1,2)$, where $M_{1}^{t} = \{v_1,v_4\}$ and $M_{2}^{t} = \{v_2,v_3,v_5\}$. The moving vertices are shown with a purple border. The operation $\pi^s \rightarrow \pi^{t}$ is min-$2$-edit, where $\myOverArrow{V}{s}{t}{} = \{v_2,v_4\}$.}
	\end{subfigure}
	\hfill
    \begin{subfigure}[t]{0.36\linewidth}
    \begin{tikzpicture}[scale=0.80]
        \footnotesize
		\tikzstyle{nn}=[circle,thick,draw=black!75,fill=black!10,minimum size=3mm]
		\tikzstyle{nn2}=[circle,thick,draw=purple!75,fill=black!10,minimum size=3mm]
		\tikzstyle{clu}=[circle,draw=blue!50,fill=blue!20,minimum size=2.25cm,inner sep=0pt]
		\tikzstyle{pl}=[thick,draw=green!55!black!75,text=green!55!black!75]
		\tikzstyle{nl}=[thick,draw=red,text=red]
		
        \node (cluster1) at (-2,7.75) [clu, label=140:$M_{1}^{t'} \equal M_{2}^{t}$] {};
        \node (cluster2) at (-2,3.75) [clu, label=140:$M_{2}^{t'} \equal M_{1}^{t}$] {};
                
		\node[nn] (v2) at (-2.75,7.5) {$v_{2}$};
		\node[nn2] (v3) at (-2,8.5) {$v_{3}$};
		\node[nn2] (v5) at (-1.25,7.5) {$v_{5}$};
		\node[nn] (v4) at (-2,3) {$v_{4}$};
	    \node[nn2] (v1) at (-2.75,4) {$v_{1}$};

	\end{tikzpicture}
	\caption{Target membership vector $\pi^{t'} = (2,1,1,2,1)$, where $M_{1}^{t'} = \{v_2,v_3,v_5\}$ and $M_{2}^{t'} = \{v_1,v_4\}$. The moving vertices are shown with a purple border. Since $M_{1}^{t'} = M_{2}^{t}$ and $M_{2}^{t'} = M_{1}^{t}$, $\pi^s \rightarrow \pi^{t'}$ is not a min-$3$-edit operation, where $\myOverArrow{V}{s}{i}{} = \{v_1,v_3,v_5\}$.}
	\end{subfigure}
	\caption{Illustrative example for two edit operations applied to the same source membership vector $\pi^s$, whose associated partition is shown in Fig.~\ref{fig:IllustrativeExampleEditForNotations}a. The first operation, $\myOverArrow{V}{s}{t}{}$, is shown in Fig.~\ref{fig:IllustrativeExampleEditForNotations}b, and the second, $\myOverArrow{V}{s}{i}{}$, in Fig.~\ref{fig:IllustrativeExampleEditForNotations}.c. 
	The underlying partition in $P^{t}$ and $P^{t'}$ is the same, but the corresponding membership vectors $\pi^{t}$ and $\pi^{t'}$ are different. Consequently, $\myOverArrow{V}{s}{t}{}$ is a \textit{min-edit} operation, whereas $\myOverArrow{V}{s}{i}{}$ is not.}
	\label{fig:IllustrativeExampleEditForNotations}
\end{figure}

\subsection{Complete Neighborhood Search (\textit{CoNS})}
\label{subsec:CoNS}

Our neighborhood search method $CoNS(G, \pi^s, r)$ takes as input a signed graph $G$, a membership vector $\pi^s$ associated with an optimal partition $P^s$ of $\ell$ modules and an edit distance $r$. It returns the set $N_{r}(\pi^s)$ of membership vectors associated with \textit{all} optimal partitions, obtained by applying min-$r$-edit operations to a given membership vector $\pi^s$. To ensure the completeness of set $N_{r}(\pi^s)$, $CoNS(G, \pi^s, r)$ analyses all combinations of moving vertices and their target modules. We use two pruning strategies in order to  eliminate some combinations that do not lead to an optimal partition when an edit operation is applied. Both of them are described in this section, whereas the properties upon which they are based are detailed in Section~\ref{sec:PruningStrategies}. 
In simple terms, they are based on the relations between a set of moving vertices and their target modules. In the following, we present $CoNS(G, \pi^s, r)$ as a pipeline procedure consisting of four parts, as shown in Algorithm~\ref{algo:CoNS}. 
\begin{algorithm}[!htp]
\SetAlgoLined
    \footnotesize
    \SetKwInOut{Input}{Input}
    \SetKwInOut{Output}{Output}
    \Input{signed graph $G(V, E, s)$, source membership vector $\pi^s$ with $\ell$ module labels, edit distance $r$}
    \Output{$N_{r}(\pi^s)$}
    $T = \{1, 2, .., \ell\}$ // module labels in $\pi^s$ \;
    \tcc{1\textsuperscript{st} part} 
    \For{each $\myOverArrow{V}{s}{t}{} \in \binom{V}{r}$}{\label{algoline:CoNS-part1}
        %
        \If{\textit{internalPruning}($G$, $\pi^s$, $\myOverArrow{V}{s}{t}{}$)}{ \label{algoline:CoNS-internalPruning}
            go to line~\ref{algoline:CoNS-part1} \;
        }
        \tcc{2\textsuperscript{nd} part} \label{algoline:CoNS-part2}
        $T_0 = \pi(\myOverArrow{V}{s}{t}{}) \bigcup\ \{\textit{Unknown}\}$ // initial target module labels \; \label{algoline:CoNS-initTargetModules} 
        $T_{rem} = T \setminus T_0$ // remaining module labels \label{algoline:CoNS-initRemainingTargetModules} \;
        \For(\tcp*[h]{$\mathcal{P}(.)$: permutations}){each $T'_0 \in \mathcal{P}(\binom{T_0}{r})$, s.t. $\pi^s(u) \notin T'_0, \forall u \in \myOverArrow{V}{s}{t}{}$}{\label{algoline:CoNS-selection_T0} 
            Let $\pi^t$ be the partition after assigning $T'_0$ to $\myOverArrow{V}{s}{t}{}$ 
            \; \label{algoline:CoNS-update1}
            \If{\textit{externalPruning}($G$, $\pi^s$, $\pi^t$, $\myOverArrow{V}{s}{t}{}$)}{ \label{algoline:CoNS-externalPruning1}
                go to line~\ref{algoline:CoNS-selection_T0} \;
            }
            \tcc{3\textsuperscript{rd} part}  \label{algoline:CoNS-part3}
            Let $B$ be the the number of \textit{Unknown} labels in $\pi^t$ \;
            \eIf{$B \neq 0$}{ \label{algoline:CoNS-atLeastOneUnknownTargetModule}
                Let $\myOverArrow{V}{s}{t}{}_{rem}$ be the moving vertices, whose $\pi^t(\myOverArrow{V}{s}{t}{}_{rem}) = $ \textit{Unknown} \;
                \For{$b \in \{1, .., B\}$}{ \label{algoline:CoNS-fix-b}
                    Let $\mathcal{I}$ be the set of all vectors of size $B$ with elements belonging to $\{1,..,b\}$, 
                    s.t. each element for each vector appears once \; \label{algoline:CoNS-couplingInfo}
                    \For{each $I \in \mathcal{I}$}{\label{algoline:CoNS-foreach-couplingInfo}
                        Update $\pi^t$ with the assignment of $I$ to $\pi^t(\myOverArrow{V}{s}{t}{}_{rem})$ \; \label{algoline:CoNS-update2}  
                        \If{\textit{externalPruning}($G$, $\pi^s$, $\pi^t$, $\myOverArrow{V}{s}{t}{}$)}{ \label{algoline:CoNS-externalPruning2}
                            go to line~\ref{algoline:CoNS-foreach-couplingInfo} \;
                        }
                        \tcc{4\textsuperscript{th} part} \label{algoline:CoNS-part4}
                        $T^+_{rem}= T_{rem} \cup \{\ell+1, .., \ell+b\}$ // expand for empty modules \;
                        \For{each $T'_r \in \mathcal{P}(\binom{T^+_r}{b})$}{ \label{algoline:CoNS-foreach-finalTargetModules}
                            Replace the assignment of $I$ to $\myOverArrow{V}{s}{t}{}_{rem}$ by $T_{rem}$ in $\pi^t$ \; \label{algoline:CoNS-finalTargetModules}
                           
                          \If{$I(\pi^s) = I(\pi^t)$ and $\neg$\textit{externalPruning}($G$, $\pi^s$, $\pi^t$, $\myOverArrow{V}{s}{t}{}$)}{ \label{algoline:CoNS-optimalityCond2}
                                $N_{r}(\pi^s) = N_{r}(\pi^s) \bigcup \pi^t$ \; 
                            }
                        }
                    }
                }
            } { \label{algoline:CoNS-knownTargetModules}
                \If{$I(\pi^s) = I(\pi^t)$}{ \label{algoline:CoNS-optimalityCond1}
                    $N_{r}(\pi^s) = N_{r}(\pi^s) \bigcup \pi^t$ \;
                }
            }
        }
    }
\caption{$CoNS(G, \pi^s, r)$.}
\label{algo:CoNS}
\end{algorithm}

In the first part, $CoNS(G, \pi^s, r)$ starts by selecting a candidate set of $r$ moving vertices $\myOverArrow{V}{s}{t}{}$ among all vertices in $V$ (line \ref{algoline:CoNS-part1}). The set of all possible $r$ moving vertices is obtained by generating all combinations $\binom{V}{r}$. 
We know that probably not every $\myOverArrow{V}{s}{t}{}$ leads to a min-$r$-edit operation and an optimal partition $P^t$. To detect such cases, 
we apply the first pruning procedure (line~\ref{algoline:CoNS-internalPruning}), called \textit{internalPruning} and depicted 
in Algorithm~\ref{algo:internalPruning}. 
This procedure checks, without the knowledge of target modules, if $\myOverArrow{V}{s}{t}{}$ satisfies some connectivity conditions related to the atomicity property (defined in Section~\ref{subsec:Decomposable_MED}) and the MVMO property up to three moving vertices (defined in Section~\ref{subsec:MVMO-property}). As we will see in Section~\ref{sec:PruningStrategies}, whenever one of these two conditions is not satisfied, the candidate set $\myOverArrow{V}{s}{t}{}$ can be pruned. 

\begin{algorithm}[!ht]
\SetAlgoLined
    \SetKwInOut{Input}{Input}
    \SetKwInOut{Output}{Output}
    \Input{signed graph $G(V, E, s)$, source membership vector $\pi^s$, moving vertices $\myOverArrow{V}{s}{t}{}$}
    \Output{Boolean variable}
     
    \tcc{1\textsuperscript{st} part}
    \If(\tcp*[h]{Properties~\ref{prop:EdgeConnectivity} and~\ref{prop:SpuriousEdgeConnectivity}a}){$\neg$\textit{intAtomic}($G$, $\pi^s$, $\myOverArrow{V}{s}{t}{}$)}{
        \Return true
    }
    
    \tcc{2\textsuperscript{nd} part}
    \If(\tcp*[h]{Lemma~\ref{lemma:MVMO-2Vertices}.1 and Lemma~\ref{lemma:MVMO-3Vertices}}){$2 \leq |\myOverArrow{V}{s}{t}{}| \leq 3$ and $\neg$\textit{intMVMO}($G$, $\pi^s$, $\myOverArrow{V}{s}{t}{}$)}{
        \Return true;
    }
    \Return false;
\caption{\textit{internalPruning}($G$, $\pi^s$, $\myOverArrow{V}{s}{t}{}$)}
\label{algo:internalPruning}
\end{algorithm}

At this point, the set of moving vertices $\myOverArrow{V}{s}{t}{}$ is fixed. Next, the algorithm defines step by step the target membership vector $\pi^t$ by setting the target module of each vertex in $\myOverArrow{V}{s}{t}{}$. We handle the process of generating the target membership vector $\pi^t$ in three steps, which constitutes the second, third and fourth parts of Algorithm~\ref{algo:CoNS}, in order to take advantage of our pruning strategy. The pruning strategy used in these parts is slightly different from that of the first part: it is based on external connections from $\myOverArrow{V}{s}{t}{}$ to $\myOverArrow{V}{s}{t}{}$. The concerned procedure is called \textit{externalPruning} and depicted in Algorithm~\ref{algo:externalPruning}. It verifies, based on external relations among $\myOverArrow{V}{s}{t}{}$, if $\myOverArrow{V}{s}{t}{}$ satisfies the min-edit (Section~\ref{subsec:NonMinimumEditOperationPruning}), atomicity (Section~\ref{subsec:Decomposable_MED}) and MVMO (Section~\ref{subsec:MVMO-property}) properties. As discussed in Section~\ref{sec:PruningStrategies}, whenever one of these three conditions is not satisfied, the possibility of target modules for candidate set $\myOverArrow{V}{s}{t}{}$ is pruned. 
These pruning strategies can be applied even when some target modules are not already decided. 
Therefore, they are invoked, whenever the target modules of $\myOverArrow{V}{s}{t}{}$ are updated (lines \ref{algoline:CoNS-externalPruning1}, \ref{algoline:CoNS-externalPruning2} and \ref{algoline:CoNS-optimalityCond2}).

In the second part, a moving vertex is allowed to move into a module, whose label is in $T_0$. The set $T_0$, defined at line \ref{algoline:CoNS-initTargetModules}, contains the labels of all unique source modules of $\myOverArrow{V}{s}{t}{}$, as well as a label \textit{Unknown}. We use this label \textit{Unknown} to indicate that the target module of a vertex is not already assigned. We also define $T_{rem}$ as the remaining modules. Notice that, from this point, a target membership vector $\pi^t$ can contain multiple vertices with target modules labeled as \textit{Unknown}. In the third part, this \textit{Unknown} label allows us to handle the other modules in $T_{rem}$ as target possibilities. At line~\ref{algoline:CoNS-selection_T0}, we consider all possible permutations in $\mathcal{P}(\binom{T_0}{r})$ such that, for each vertex in $\myOverArrow{V}{s}{t}{}$, the target and source modules are different.

At this point, the \textit{externalPruning} procedure can be applied for vertices whose target modules are already assigned. If the pruning conditions are not satisfied, the algorithm proceeds with the third part. Line~\ref{algoline:CoNS-atLeastOneUnknownTargetModule} checks if no target module in $\pi^t$ is unknown ($B = 0$). If so, then a new optimal partition has been found, and the algorithm goes on with the final test (line~\ref{algoline:CoNS-knownTargetModules}). However, if it is not the case, the algorithm enters the fourth part: all the combinations in $T_{rem}$ (more precisely, $T^+_r$) are considered as a possible assignment of unknown target modules. One can expect this task to be computationally expensive. Thus, instead of directly determining the target modules of each vertex in $\myOverArrow{V}{s}{t}{}_{rem}$, we first generate all the \textit{coupling} scenarios of moving vertices going into the same \textit{Unknown} target module. To handle this, let $b$ be the number of distinct \textit{Unknown} target modules (line~\ref{algoline:CoNS-fix-b}), for each value in the range of $\{1,..,B\}$. We build all possible \textit{coupling} scenarios $\mathcal{I}$ for the value $b$ (line~\ref{algoline:CoNS-couplingInfo}). We couple a pair of remaining moving vertices, if they move into the same \textit{Unknown} target module. For instance, $\{Unknown1,Unknown1,Unknown2\}$ indicates that the last moving vertex moves into a different \textit{Unknown} target module than the first two. Once the target membership vector $\pi^t$ is enriched with a \textit{coupling} scenario $\mathcal{I}$, the external pruning is repeated in line~\ref{algoline:CoNS-externalPruning2}. If the pruning conditions are not satisfied, the algorithm finally determines the target modules of $\myOverArrow{V}{s}{t}{}_{rem}$, by respecting the actual \textit{coupling} scenario $\mathcal{I}$ (line~\ref{algoline:CoNS-finalTargetModules}). In the end, we add the current $\pi^s \rightarrow \pi^t$ into $N_{r}(\pi^s)$, if the optimality condition is met (line~\ref{algoline:CoNS-optimalityCond2}).

\begin{algorithm}[!ht]
\SetAlgoLined
    \SetKwInOut{Input}{Input}
    \SetKwInOut{Output}{Output}
    \Input{signed graph $G(V, E, s)$, source membership vector $\pi^s$, target membership vector $\pi^t$, moving vertices $\myOverArrow{V}{s}{t}{}$}
    \Output{Boolean variable}
    
    \tcc{1\textsuperscript{st} part}
    \If(\tcp*[h]{Properties~\ref{prop:Non-min-edit-operation}, \ref{prop:atomic}, \ref{prop:SpuriousEdgeConnectivity}b and~\ref{prop:InteractionConnectivity}}){$\neg$\textit{minEdit}($\pi^s$, $\pi^t$, $\myOverArrow{V}{s}{t}{}$) or $\neg$\textit{extAtomic}($G$, $\pi^s$, $\pi^t$, $\myOverArrow{V}{s}{t}{}$)}{
        \Return true \;
    }
    
    \tcc{2\textsuperscript{nd} part}
    \If(\tcp*[h]{Lemmas~\ref{lemma:MVMO-2Vertices}.2, \ref{lemma:MVMO-3Vertices} and~\ref{lemma:MVMO-d-Vertices}}){$2 \leq |\myOverArrow{V}{s}{t}{}|$ and $\neg$\textit{extMVMO}($G$, $\pi^s$, $\pi^t$, $\myOverArrow{V}{s}{t}{}$)}{
        \Return true \;
    }

    \Return false;
\caption{\textit{externalPruning}($G$, $\pi^s$, $\pi^t$, $\myOverArrow{V}{s}{t}{}$)}
\label{algo:externalPruning}
\end{algorithm}


Without the problem-specific pruning strategies (internal and the external verification of properties from Section~\ref{subsec:MVMO-property}), Algorithm~\ref{algo:CoNS} amounts to a brute force method. 
Although $CoNS(G, \pi^s, r)$ can be very costly (even for small values of $r$), the properties described in Section~\ref{sec:PruningStrategies} allow us to develop a method whose cost is relatively lower than the brute force method.
Indeed, given a membership vector of size $n$ with $\ell$ module labels, the number of non-repetitive permutations of all the combinations of $r$ moving vertices is $n^r \times r!$ and the number of non-repetitive permutations of all the combinations of target modules is $\ell^r \times r!$. In the end, the brute force method is $\Theta(n^r \times r! \times \ell^r \times r!)$, where we assume that $\ell > r$. Computational results in Section~\ref{subsec:EvaluationMVMO} show that the pruning strategies reduce the computational cost of this procedure.

\subsection{Main Procedure}
\label{subsec:RNS}
The main procedure for \textit{Recurrent Neighborhood Search (RNS)} consists in applying \textit{CoNS} from Section~\ref{subsec:CoNS} in a recursive manner, in order to build a search tree called the \textit{RNS tree}. This recursive procedure is depicted in Figure~\ref{subfig:RNS-tree}. The root node of \textit{RNS tree} corresponds to the initial partition $P$. All other nodes of the RNS tree constitute the set of optimal solutions reached directly or indirectly from $P$. A solution associated with a node at some level $h$ of the \textit{RNS tree} is obtained after $h-1$ applications of the \textit{CoNS} procedure. The height of the \textit{RNS} tree is unknown but necessarily finite, as we will see next.

\begin{figure}
    \centering
	\begin{subfigure}[b]{0.65\linewidth}
    \centering
    \includegraphics[width=1.0\textwidth]{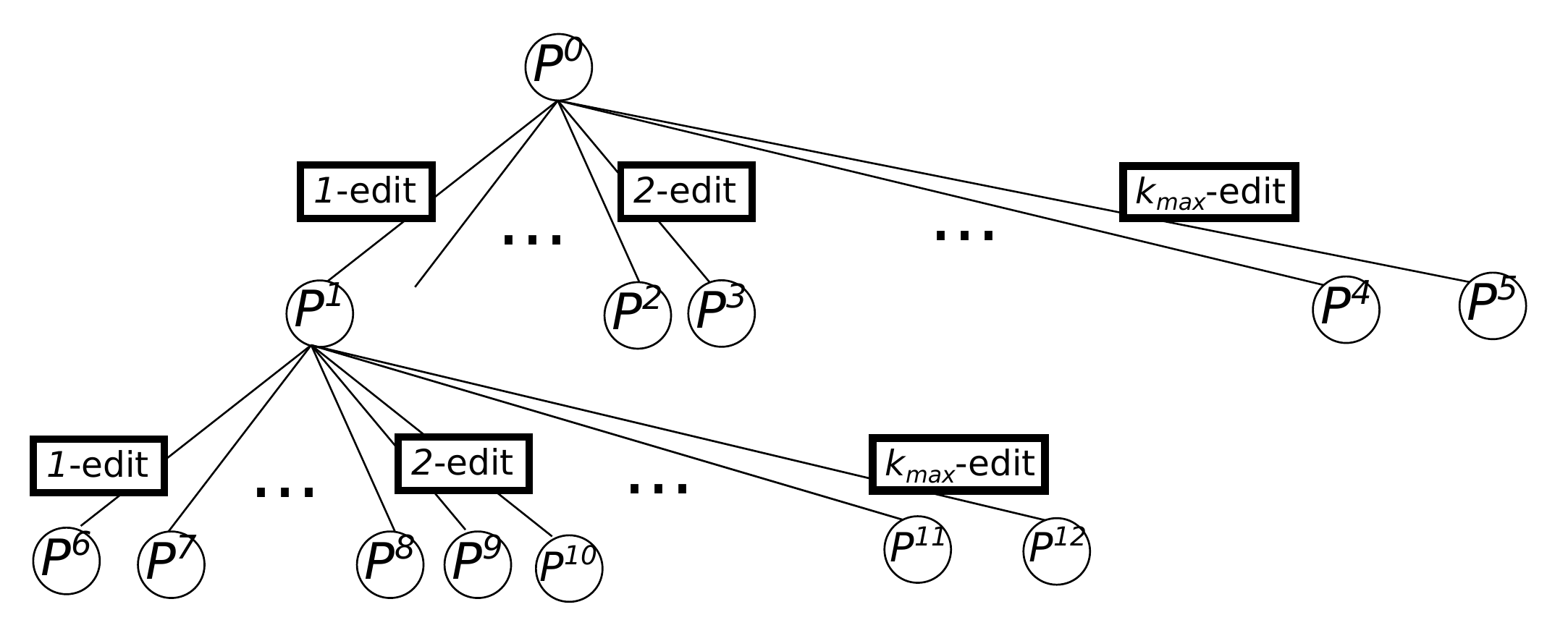}
    \caption{RNS Tree.}
    \label{subfig:RNS-tree}
    \end{subfigure}
    \hfill
    \begin{subfigure}[b]{0.3\linewidth}
    \centering
    \includegraphics[width=1.0\textwidth]{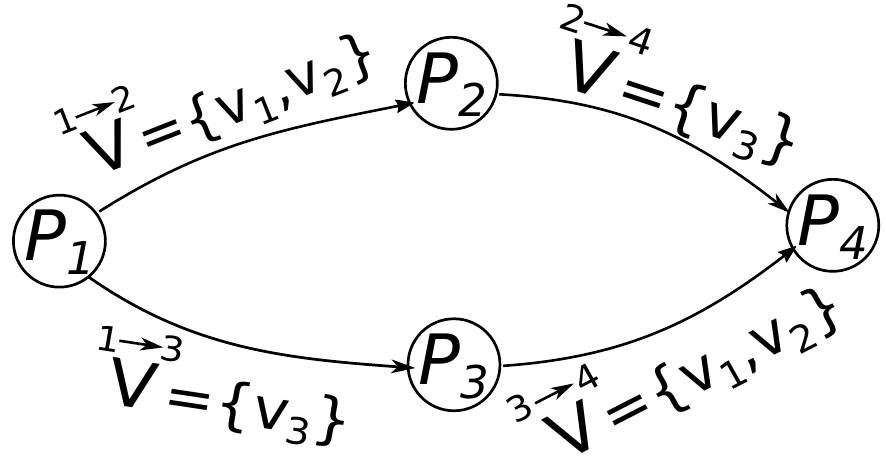}
	\caption{Example of duplication in RNS. Both $P^2$ and $P^3$ generate $P^4$.}
	\label{subfig:RNS-duplication-example}
	\end{subfigure}
	\caption{Illustrations for the Recurrent Neighborhood Search (RNS). In both figures, circles represent membership vectors associated with optimal partitions.}
\end{figure}

Method $RNS(G,P,r_{max})$, which takes as input a signed graph $G$, an optimal partition $P$ and a maximum edit distance $r_{max}$, is detailed in Algorithm~\ref{algo:RNS}. It starts from the initial partition $P$ (line~\ref{algoline:initPartition_in_RNS}), and enumerates a set of its neighbor optimal partitions up to edit distance $r_{max}$ (line~\ref{algoline:k_vertices_in_RNS}), denoted by $RN_{\leq r_{max}}(P)$. 
Each partition $P^t$ associated with $\pi^t \in N_{r}(\pi^s)$ is considered as a potential initial partition for seeking new neighbor partitions (line~\ref{algoline:for_each_Pt_in_RNS}). Despite the pruning techniques which avoid some repetitions, it is possible that $P^t$ has been already discovered and belongs to $RN_{\leq r_{max}}(P)$ (see such an example in Figure~\ref{subfig:RNS-duplication-example}). Line~\ref{algoline:duplicationRemovalRNS} checks this case, and discards $P^t$, if required. This process is repeated in a recursive manner, until there is no new partition obtained. 

\begin{algorithm}[!ht]
\SetAlgoLined
    \SetKwInOut{Input}{Input}
    \SetKwInOut{Output}{Output}
    \Input{signed graph $G(V,E,s)$, partition $P$, maximum edit distance $r_{max}$}
    \Output{$RN_{\leq r_{max}}(P)$}
    $U = \{ P \}$ // a set of unprocessed partitions \label{algoline:initPartition_in_RNS} \;
    $RN_{\leq r_{max}}(P) = \emptyset$ // a set of discovered partitions\;
    \While{$U \neq \emptyset$}{ 
        \For{each $r \in \{1,..,r_{max}\}$}{ \label{algoline:k_vertices_in_RNS}
            Let $P^s$ be an element of $U$, where $\pi^s$ is an associated membership vector of $P^s$ \;
            $RN_{\leq r_{max}}(P) = RN_{\leq r_{max}}(P) \bigcup P^s$ \;
            $N_{r}(\pi^s) = \mathit{CoNS(G, \pi^s, r)}$ // Complete Neighborhood Search \; \label{algoline:CoNS_in_RNS}
            \For(\tcp*[h]{$P^t$ is the associated partition of $\pi^t$}){$\pi^t \in N_{r}(\pi^s)$}{ \label{algoline:for_each_Pt_in_RNS}
                \tcc{memory-based duplication removal}
                \If{$P^t \notin RN_{\leq r_{max}}(P)$}{ \label{algoline:duplicationRemovalRNS}
                    $U = U \bigcup P^t$
                }
            }
        }
    }
\caption{$RNS(G,P,r_{max})$}
\label{algo:RNS}
\end{algorithm}

In the rest of this work, for the sake of convenience, we leave out the common parameter $G$ and $P^s$ (or $P$) when referring to the mentioned methods: $CoNS(r)$ and $RNS(r_{max})$.

\section{Pruning Strategies}
\label{sec:PruningStrategies}
In this section, we present the pruning strategies incorporated in the method $CoNS(G, \pi^s,r)$, described in Section~\ref{sec:RecurrentNeighborhoodSearch}. But before this, we need to introduce some additional definitions. Consider an edit operation $\pi^s \rightarrow \pi^t$ with moving vertices in set $\myOverArrow{V}{s}{t}{}$, where membership vector $\pi^s$ (resp. $\pi^t$) is associated with source partition $P^s$ (resp. target partition $P^t$). Let us define $\mySimpleOverArrow{V}^s_{\pi^s(u)} = \myOverArrow{V}{s}{t}{} \cap M^s_{\pi^s(u)}$ (resp. $\mySimpleOverArrow{V}^t_{\pi^s(u)} = \myOverArrow{V}{s}{t}{} \cap M^t_{\pi^s(u)}$) as the set of moving vertices being in the 
source module of vertex $u$ in $P^s$ (resp. in $P^t$). Also, let $\mySimpleOverArrow{V}^s_{\pi^t(u)} = \myOverArrow{V}{s}{t}{} \cap M^s_{\pi^t(u)}$ (resp. $\mySimpleOverArrow{V}^t_{\pi^t(u)} = \myOverArrow{V}{s}{t}{} \cap M^t_{\pi^t(u)}$) be the set of moving vertices being in the target module of $u$ in $P^s$ (resp. in $P^t$). Finally, we define 
$\myOverArrow{V}{s}{t}{u} = \Bigl( \mySimpleOverArrow{V}^s_{\pi^s(u)} \cup \mySimpleOverArrow{V}^t_{\pi^s(u)} \cup \mySimpleOverArrow{V}^t_{\pi^t(u)} \cup \mySimpleOverArrow{V}^s_{\pi^t(u)} \Bigr) \setminus \{u\}$
as the set of moving vertices connected to vertex $u$ in $\widetilde{G}[\myOverArrow{V}{s}{t}{}]$. 
To illustrate the aforementioned notations related to $\myOverArrow{V}{s}{t}{}$, we consider again the same example from Figure~\ref{fig:IllustrativeExampleEditForNotations}. For $v_1 \in \myOverArrow{V}{s}{t}{}$, we have $\pi^s(v_1) = 1$ and $\pi^t(v_1) = 3$. Then, $\mySimpleOverArrow{V}^s_{\pi^s(v_1)} = \{v_1, v_2\}$ is the set of vertices in $M^s_1$ and $\mySimpleOverArrow{V}^t_{\pi^t(v_1)} = \{v_1\}$ since $v_1$ is the only moving vertex belonging to $M^t_3$. Also, $\mySimpleOverArrow{V}^s_{\pi^t(v_1)} = \mySimpleOverArrow{V}^s_{\{3\}} = \emptyset$ since there is no moving vertex in $M^s_3$, and $\mySimpleOverArrow{V}^t_{\pi^s(v_1)} = \{v_3\}$ since $v_3\in M^t_1$. Finally, we get $\myOverArrow{V}{s}{t}{v_1} = \{v_2, v_3\}$.

\subsection{Non-Minimum Edit Operation Pruning}
\label{subsec:NonMinimumEditOperationPruning}
When generating the set $N_r(\pi^s)$ in $RNS$, only considering min-edit operations is sufficient. Next, we define a property allowing to detect, in some cases, that an edit operation is not minimal. The idea behind it is to define two sufficient conditions on the set of moving vertices implying that there exist an operation of smaller cost leading to the same partition. In the following, let $\pi^s \rightarrow \pi^t$ be an edit operation with moving vertices in $\myOverArrow{V}{s}{t}{}$.

\begin{property}[Non-min-edit operation]
	Let $\mySimpleOverArrow{V_a} \subseteq \myOverArrow{V}{s}{t}{}$ be a subset of $\myOverArrow{V}{s}{t}{}$. Assume that the vertices in $\mySimpleOverArrow{V_a}$ constitute the majority part of module $M^s_{a}$ (i.e. $|\mySimpleOverArrow{V_a}| > |M^s_a \setminus \mySimpleOverArrow{V_a}|$) and that they are in the same target module $M^t_b$ in $P^t$ (i.e. $|\pi^t(\mySimpleOverArrow{V_a})| = 1$). Moreover, suppose there exists a subset of moving vertices $\mySimpleOverArrow{V_b}\subseteq \myOverArrow{V}{s}{t}{}$, such that $\emptyset \subseteq \mySimpleOverArrow{V_b} \subseteq M^s_b$.
	Let us define $\overline{V}_a$ (resp. $\overline{V}_b$) as $M^s_a \setminus \myOverArrow{V}{s}{t}{}$ (resp. $M^s_b \setminus \myOverArrow{V}{s}{t}{}$) in $P^s$.
	Each condition in the following is a sufficient condition for $\pi^s \rightarrow \pi^t$ to be a non-min-edit operation:
	\item[\textit{(a)}] If $\mySimpleOverArrow{V_b}$ does not move into $M^s_a$ 
	and $|\mySimpleOverArrow{V_a}| > |\overline{V}_a| + |\overline{V}_b|$, then $\pi^s \rightarrow \pi^t$ is not a min-edit operation.
	\item[\textit{(b)}] If $\mySimpleOverArrow{V_b}$ moves into $M^s_a$
	and $|\mySimpleOverArrow{V_a}| + |\mySimpleOverArrow{V_b}| > |\overline{V}_a| + |\overline{V}_b|$, then $\pi^s \rightarrow \pi^t$ is not a min-edit operation.
	\label{prop:Non-min-edit-operation}
\end{property}
\begin{proof}
Let us first handle the condition \textit{(\ref{prop:Non-min-edit-operation}a)}, which is illustrated in Figure~\ref{fig:IllustrativeExampleMinEditOperation}. 
When this condition is satisfied, keeping $\mySimpleOverArrow{V_a}$ in $M^s_a$ and rather moving $\overline{V}_a$ (resp. $\overline{V}_b$) into $ M^s_b$ (resp. $ M^s_a$) results in an $r'$-edit operation with $r'<r$.
In the case where condition \textit{(\ref{prop:Non-min-edit-operation}b)} is satisfied, keeping $\mySimpleOverArrow{V_b}$ in $M^s_b$ while moving $\overline{V}_a$ and $\overline{V}_b$ results in an $r'$-edit operation with $r' < r$. Let us take the following example to illustrate Property~\ref{prop:Non-min-edit-operation}.b. Two modules exchange all of their vertices with each other, such that $\mySimpleOverArrow{V_a} =  M^s_a$, $\overline{V}_a = \emptyset$,  $\mySimpleOverArrow{V_b} =  M^s_b$ and $\overline{V}_b = \emptyset$. In the end, we obtain the same two modules. Since the inequality $|\mySimpleOverArrow{V_a}| + |\mySimpleOverArrow{V_b}| > |\overline{V}_a| + |\overline{V}_b|$ holds in this particular scenario, this is not a min-edit operation.
\end{proof}

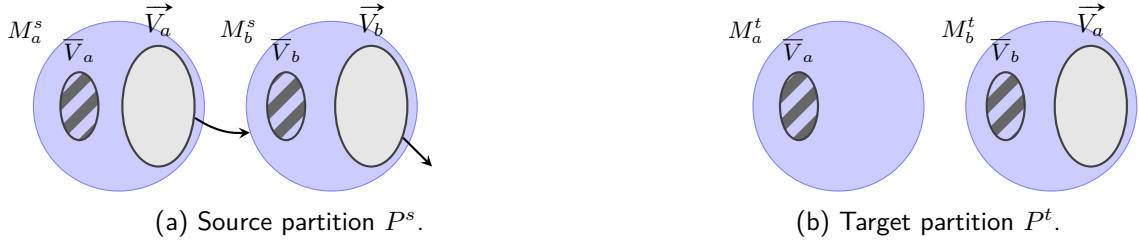
\begin{figure}
    \centering
    \begin{subfigure}[h]{0.45\linewidth}
    \begin{tikzpicture}[->,>=stealth,scale=0.80]
        \usepgflibrary{arrows}
        \usepgflibrary{shapes}
        \usepgflibrary{patterns}
        
        \pgfdeclarepatternformonly{stripes}
        {\pgfpointorigin}{\pgfpoint{0.4cm}{0.4cm}}
        {\pgfpoint{0.4cm}{0.4cm}}
        {
            \pgfpathmoveto{\pgfpoint{0cm}{0cm}}
            \pgfpathlineto{\pgfpoint{0.4cm}{0.4cm}}
            \pgfpathlineto{\pgfpoint{0.4cm}{0.2cm}}
            \pgfpathlineto{\pgfpoint{0.2cm}{0cm}}
            \pgfpathclose%
            \pgfusepath{fill}
            \pgfpathmoveto{\pgfpoint{0cm}{0.2cm}}
            \pgfpathlineto{\pgfpoint{0cm}{0.4cm}}
            \pgfpathlineto{\pgfpoint{0.2cm}{0.4cm}}
            \pgfpathclose%
            \pgfusepath{fill}
        }

        \footnotesize
		\tikzstyle{mn}=[ellipse,thick,draw=black!75,fill=black!10,minimum width=0.95cm,minimum height=1.6cm,align=center]
		\tikzstyle{rn}=[ellipse,thick,draw=black!75,fill=black!10,minimum width=0.5cm,minimum height=0.9cm,align=center,pattern=stripes, pattern color=black!60]
		\tikzstyle{clu}=[circle,draw=blue!50,fill=blue!20,minimum size=2.25cm,inner sep=0pt]
		\tikzstyle{pl}=[thick,draw=green!55!black!75,text=green!55!black!75]
		\tikzstyle{bl}=[thick,draw=black,text=black]
		
        \node (cluster1) at (-2.5,7.75) [clu, label=140:$M_{a}^{s}$] {};
        \node (cluster2) at (1,7.75) [clu, label=140:$M_{b}^{s}$] {};
                
		\node[mn, label=90:$\mySimpleOverArrow{V_a}$] (v1) at (-1.85,7.75) {};
		\node[rn, label=90:$\overline{V}_a$] (v2) at (-3.15,7.75) {};
		\node[mn, label=90:$\mySimpleOverArrow{V_b}$] (v3) at (1.65,7.75) {};
		\node[rn, label=90:$\overline{V}_b$] (v4) at (0.25,7.75) {};
		
        \draw [bl] (v1) to [bend right=18] (cluster2);
        \draw [bl] (v3) --++(1cm,-1cm) node[above,midway]{};
	\end{tikzpicture}
	\caption{Source partition $P^s$.}
	\end{subfigure}
	\hspace{0.5cm}
    \begin{subfigure}[h]{0.45\linewidth}
    \centering
    \begin{tikzpicture}[->,>=stealth,scale=0.80]
        \usepgflibrary{arrows}
        \usepgflibrary{shapes}
        \usepgflibrary{patterns}

        \footnotesize
		\tikzstyle{mn}=[ellipse,thick,draw=black!75,fill=black!10,minimum width=0.95cm,minimum height=1.6cm,align=center]
		\tikzstyle{rn}=[ellipse,thick,draw=black!75,fill=black!10,minimum width=0.5cm,minimum height=0.9cm,align=center,pattern=stripes, pattern color=black!60]
		\tikzstyle{clu}=[circle,draw=blue!50,fill=blue!20,minimum size=2.25cm,inner sep=0pt]
		\tikzstyle{pl}=[thick,draw=green!55!black!75,text=green!55!black!75]
		\tikzstyle{bl}=[thick,draw=black,text=black]
		
        \node (cluster1) at (-2.5,7.75) [clu, label=140:$M_{a}^{t}$] {};
        \node (cluster2) at (1,7.75) [clu, label=140:$M_{b}^{t}$] {};
                
		\node[mn, label=90:$\mySimpleOverArrow{V_a}$] (v1) at (1.65,7.75) {};
		\node[rn, label=90:$\overline{V}_a$] (v2) at (-3.15,7.75) {};
		\node[rn, label=90:$\overline{V}_b$] (v4) at (0.25,7.75) {};
		
	\end{tikzpicture}
	\caption{Target partition $P^t$.}
	\end{subfigure}
	\caption{Illustrative example for Property~\ref{prop:Non-min-edit-operation}a, where vertices in $\protect\mySimpleOverArrow{V_a}$ (resp. $\protect\mySimpleOverArrow{V_b}$) move from their source module $M^{s}_{a}$ (resp. $M^{s}_{b}$)  (Fig.~\ref{fig:IllustrativeExampleMinEditOperation}a) into target module $M^{s}_{b}$ (resp. $M^{s}_{c}$, which is purposely not shown) (Fig.~\ref{fig:IllustrativeExampleMinEditOperation}b). $\overline{V}_a$ and $\overline{V}_b$ represent the non-moving vertices.}
	\label{fig:IllustrativeExampleMinEditOperation}
\end{figure}


\subsection{Decomposable Edit Operation}
\label{subsec:Decomposable_MED}
The next set of properties is related to the decomposability of an $r$-edit operation $\pi^s \rightarrow \pi^t$. We assume that $I(P^s) = I(P^t)$, which is in line with our objective of enumerating all optimal solutions of a given graph $G$.

\begin{definition}[Decomposable edit operation]
    We consider an $r$-edit operation $\pi^s \rightarrow \pi^t$ as \textit{decomposable} if there is an intermediate membership vector $\pi^{t'}$, associated with a partition $P^{t'}$, between $\pi^s$ and $\pi^t$ such that:
    \begin{enumerate}
        \item $I(P^s) = I(P^t) = I(P^{t'})$;
        \item $\myOverArrow{V}{s}{i}{} \subset \myOverArrow{V}{s}{t}{}$; and
        \item $\pi^{t'}(u)=\pi^t(u)$ for each $u \in \myOverArrow{V}{s}{i}{}$.
    \end{enumerate}
    \label{def:decomposableEdit}
\end{definition}

\begin{property}[Atomic edit operation]
	We consider that an $r$-edit operation $\pi^s \rightarrow \pi^t$ is atomic if the condition $I(P^s) < I(P^{t'})$ is satisfied for any $r'$-edit operation $P^s \rightarrow P^{t'}$ with $r' < r$ satisfying (2) and (3) in Definition~\ref{def:decomposableEdit}.
    \label{prop:atomic}
\end{property}
\begin{proof}
We assume that there is an $r'$-edit operation such that $I(P^s) = I(P^{t'})$. Then, we can construct an $r''$-edit operation $\pi^{t'} \rightarrow \pi^t$ satisfying (1) in Definition~\ref{def:decomposableEdit} and $\pi^{t'}(u)=\pi^t(u)$ for each $u \in \myOverArrow{V}{s}{t}{} \setminus \myOverArrow{V}{s}{i}{}$.
\end{proof}

\begin{figure}
    \centering
    \begin{subfigure}[h]{0.8\linewidth}
        \centering
        \begin{tikzpicture}[scale=0.80]
        \footnotesize
        
		\tikzstyle{nnb}=[circle,thick,draw=black!75,fill=black!10,minimum size=3mm]
		\tikzstyle{nna}=[circle,dashed,draw=red,fill=red!10,minimum size=3mm]
		\tikzstyle{pl}=[thick,draw=green!55!black!75,text=green!55!black!75]
		\tikzstyle{nl}=[thick,draw=red,text=red]
		\tikzstyle{clu}=[circle,draw=blue!50,fill=blue!20,minimum size=2.25cm,inner sep=0pt]

        \node (cluster1) at (-3.25,7.75) [clu, label=140:$M_{1}$] {};
        \node (cluster2) at (0.25,7.75) [clu, label=140:$M_{2}$] {};
        \node (cluster3) at (3.75,7.75) [clu, label=140:$M_{3}$] {};
        \node (cluster4) at (7.25,7.75) [clu, label=140:$M_{4}$] {};
        
		\node[nnb] 
		    (v1) at (-3.25,7) {$v_{1}$};
		
		\node[nnb]
		    (v2) at (-3.25,8.5) {$v_{2}$};
		
		\node[nnb]
		    (v3) at (3.75,7) {$v_{3}$};
		
		\node[nnb]
		    (v4) at (7.25,7.75) {$v_{4}$};

		\node[nna] (v5) at (0.75,8.50) {$v_{1}$};
		\node[nna] (v6) at (-0.25,8.50) {$v_{2}$};
		\node[nna] (v7) at (0.25,7) {$v_{3}$};
		\node[nna] (v8) at (3.75,8.5) {$v_{4}$};
		
		\draw[pl] (v1)  -- (v2)  node [midway, below] {};
		\draw[nl] (v3)  -- (v4)  node [midway, below] {};
        \draw [nl] (v1) to [bend right=25] (v4);
        \draw [nl] (v2) to [bend right=8] (v4);
    \end{tikzpicture}
    \caption{An illustration of an $r$-edit operation $\pi^s \rightarrow \pi^t$ with  $\myOverArrow{V}{s}{t}{} = \{v_1, v_2, v_3, v_4\}$. The membership vector $\pi^s$ associated with partition $P^s$ is defined by $\pi^s = (1,1,3,4)$, whereas $\pi^t = (2,2,2,3)$. Positive (resp. negative) edges between moving vertices are drawn in green (resp. red).}
	\label{subfig:decomposable-k-edit-operation}
    \end{subfigure}
    %
    %
    \begin{subfigure}[h]{0.5\linewidth}
    \vspace{0.25cm}
    \centering
        \begin{tikzpicture}[scale=0.80]
            \footnotesize
    		\tikzstyle{nn}=[circle,thick,draw=black!75,fill=black!10,minimum size=3mm]
    		\tikzstyle{bl}=[thick,draw=black,text=black]
    		
    		\node[nn] (v1) at (0,7.75) {$v_{1}$};
    		\node[nn] (v2) at (1.5,7.75) {$v_{2}$};
    		\node[nn] (v3) at (3,7.75) {$v_{3}$};
    		\node[nn] (v4) at (4.5,7.75) {$v_{4}$};

            \draw[bl] (v1)  -- (v2);
            \draw[bl] (v3)  -- (v4);
    	\end{tikzpicture}
    \caption{The corresponding interaction graph $\widetilde{G}[\myOverArrow{V}{s}{t}{}]$ of the $r$-edit operation illustrated in Fig.~\ref{fig:IllustrativeExampleDecomposableEditOperation}a}
	\label{subfig:interaction-graph}
    \end{subfigure}
	\caption{Illustrative example regarding the definition of an interaction subgraph, here $\widetilde{G}[\myOverArrow{V}{s}{t}{}]$.}
	\label{fig:IllustrativeExampleDecomposableEditOperation}
\end{figure}
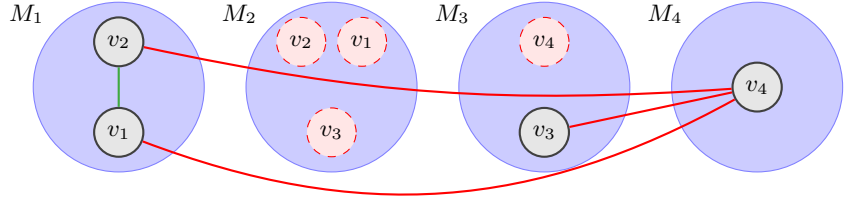
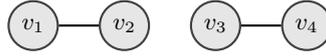

Atomicity requires three types of connectivity conditions. We state the first one in the following property.

\begin{property}[Edge connectivity]
	In an atomic edit operation $\pi^s \rightarrow \pi^t$,  
	$G[\myOverArrow{V}{s}{t}{}]$ is a single connected component. 
	\label{prop:EdgeConnectivity}
\end{property}
\begin{proof}
Let $\myOverArrow{V}{s}{i}{}$ and $\myOverArrow{V}{i}{t}{}$ be any two proper subsets of $\myOverArrow{V}{s}{t}{}$, such that $\myOverArrow{V}{s}{i}{} \cap \myOverArrow{V}{i}{t}{} = \emptyset$ and $E(\myOverArrow{V}{s}{i}{},\myOverArrow{V}{i}{t}{}) = \emptyset$. 
Then, we can construct with $\myOverArrow{V}{s}{i}{}$ an $r'$-edit operation satisfying Definition~\ref{def:decomposableEdit}.
\end{proof}

Nevertheless, $G[\myOverArrow{V}{s}{t}{}]$ being connected is not a sufficient condition. The next property states that the signs of edges between moving vertices play a key role with respect to atomicity. In the following, we assume that $G[\myOverArrow{V}{s}{t}{}]$ is connected.


\begin{property}[Spurious edge connectivity]
    Each condition in the following is a sufficient condition for $\pi^s \rightarrow \pi^t$ to be a decomposable edit operation:
	\vspace{-0.2cm}
	\begin{itemize}
	    \item[\textit{(a)}] 
	    Let $S \subseteq \myOverArrow{V}{s}{t}{}$ be a subset such that $|\pi^s(S)|=1$ and $E(S, \mySimpleOverArrow{V}^s_{\pi^s(S)}) \neq \emptyset$. Let $G' = (\myOverArrow{V}{s}{t}{}, E', w)$, where $E' = E \setminus E(S, \mySimpleOverArrow{V}^s_{\pi^s(S)})$ and $w(e)=1$ for each $e \in E'$.
	    If  $\Omega(S, \mySimpleOverArrow{V}^s_{\pi^s(S)}) = 0$ and 
	    $G'$ is not connected, then $\pi^s \rightarrow \pi^t$ is decomposable.
	    \item[\textit{(b)}] 
	    Let $S \subseteq \myOverArrow{V}{s}{t}{}$ be a subset such that $|\pi^t(S)|=1$ and $E(S, \mySimpleOverArrow{V}^t_{\pi^t(S)}) \neq \emptyset$. Let $G'=(\myOverArrow{V}{s}{t}{}, E', w)$, where $E' = E \setminus E(S, \mySimpleOverArrow{V}^t_{\pi^t(S)})$ and $w(e)=1$ for each $e \in E'$.
	    If $\Omega(S, \mySimpleOverArrow{V}^t_{\pi^t(S)}) = 0$ and 
	    $G'$ is not connected, then $\pi^s \rightarrow \pi^t$ is decomposable.
	\end{itemize}
	\label{prop:SpuriousEdgeConnectivity}
\end{property}
\begin{proof}
Straightforwardly, we can imagine $S$ as a contracted vertex $v$ in both cases, such that the vertices in $S$ and the edges between them are replaced by a single vertex $v$. Since $\Omega(S, \mySimpleOverArrow{V}^s_{\pi^s(S)}) = 0$ ($\Omega(S, \mySimpleOverArrow{V}^t_{\pi^t(S)}) = 0$), the edges between $v$ and $\mySimpleOverArrow{V}^s_{\pi^s(v)}$ (resp. $\mySimpleOverArrow{V}^t_{\pi^t(v)}$) do not play a role (i.e., as if they did not exist). Therefore, the proof is the same as for Property~\ref{prop:EdgeConnectivity}, with subsets $S$ and $\myOverArrow{V}{s}{t}{}\setminus S$. 
\end{proof}

The last connectivity condition 
allows to verify if a moving vertex depends on the simultaneous movement of a subset of other vertices in $\myOverArrow{V}{s}{t}{}$.
It can be checked through the concept of \textit{interaction subgraph} $\widetilde{G}[\myOverArrow{V}{s}{t}{}]$. We define the interaction subgraph defined for the edit operation $\pi^s \rightarrow \pi^t$ as  $\widetilde{G}[\myOverArrow{V}{s}{t}{}] = (\myOverArrow{V}{s}{t}{}, \widetilde{E})$, where $\widetilde{E} = \{(u,v) \ |\ u,v \in \myOverArrow{V}{s}{t}{} \text{and}\ (u,v) \in E\ \ \text{and}\ (\pi^s(u) = \pi^s(v) \lor \pi^t(u) = \pi^s(v) \lor \pi^s(u) = \pi^t(v) \lor \pi^t(u) = \pi^t(v))\}$. Its extraction from its original graph $G$ is illustrated in Figure~\ref{fig:IllustrativeExampleDecomposableEditOperation}.


\begin{property}[Interaction connectivity]
	Consider an atomic edit operation $\pi^s \rightarrow \pi^t$. 
	The interaction subgraph $\widetilde{G}[\myOverArrow{V}{s}{t}{}]$ is a single connected component. 
	\label{prop:InteractionConnectivity}
\end{property}
\begin{proof}
Assume that $\widetilde{G}[\myOverArrow{V}{s}{t}{}]$ is not connected (e.g. Figure~\ref{fig:IllustrativeExampleDecomposableEditOperation}). This implies that there is a subset $\myOverArrow{V}{s}{i}{} \subseteq \myOverArrow{V}{s}{t}{}$, such that  $\widetilde{E}(\myOverArrow{V}{s}{i}{}, \myOverArrow{V}{s}{t}{} \setminus \myOverArrow{V}{s}{i}{}) = \emptyset$. Let $\pi^s \rightarrow \pi^{t'}$ be an edit operation with moving vertex set $\myOverArrow{V}{s}{i}{}$, which satisfies (2)-(3) in Definition~\ref{def:decomposableEdit}. When moving a vertex $u \in \myOverArrow{V}{s}{i}{}$ from $M^s_{\pi^s(u)}$ to $M^{s}_{\pi^{t'}(u)}$
the contribution of vertex $u$ to the imbalance is

\begin{eqnarray}
    \frac{\Omega(\{u\}, \mySimpleOverArrow{V}^s_{\pi^s(u)}) - \Omega(\{u\}, \mySimpleOverArrow{V}^s_{\pi^{t'}(u)}) + \Omega(\{u\}, \mySimpleOverArrow{V}^{t'}_{\pi^{t'}(u)}) - \Omega(\{u\}, \mySimpleOverArrow{V}^{t'}_{\pi^s(u)})}{2}.
    \label{eq:Imbalance_contributionVertexU}
\end{eqnarray}

From the definition of $\widetilde{E}$ and from $\widetilde{E}(\myOverArrow{V}{s}{i}{}, \myOverArrow{V}{s}{t}{} \setminus \myOverArrow{V}{s}{i}{}) = \emptyset$, there is no vertex $v \in \myOverArrow{V}{s}{t}{}\setminus \myOverArrow{V}{s}{i}{}$ which contributes to the contribution of vertex $u$, as shown in Equation~\eqref{eq:Imbalance_contributionVertexU}. Therefore, Equation (1) in Definition~\ref{def:decomposableEdit} is also satisfied for $\pi^s \rightarrow \pi^{t'}$. 
\end{proof}

For the sake of simplicity, in Algorithm~\ref{algo:internalPruning}, we define the function \textit{intAtomic}($G$, $\pi^s$, $\myOverArrow{V}{s}{t}{}$) which is used to indicate whether $\pi^s \rightarrow \pi^t$ satisfies Properties~\ref{prop:EdgeConnectivity} and~\ref{prop:SpuriousEdgeConnectivity}a. Notice that we do so when the target modules of $\myOverArrow{V}{s}{t}{}$ are not known. Likewise, in Algorithm~\ref{algo:externalPruning}, we define the function \textit{extAtomic}($G$, $\pi^s$, $\pi^t$, $\myOverArrow{V}{s}{t}{}$),
which indicates whether $\pi^s \rightarrow \pi^t$ satisfies Properties~\ref{prop:Non-min-edit-operation}, \ref{prop:atomic}, \ref{prop:SpuriousEdgeConnectivity}b and~\ref{prop:InteractionConnectivity}. Notice that we do so when the target modules of $\myOverArrow{V}{s}{t}{}$ are partially or completely known, i.e., in lines~\ref{algoline:CoNS-externalPruning1}, \ref{algoline:CoNS-externalPruning2} and~\ref{algoline:CoNS-optimalityCond2} of Algorithm~\ref{algo:CoNS}.

\subsection{Multiple Vertex Moves between Optima (MVMO) Property}
\label{subsec:MVMO-property}

 
In this section, we introduce our last family of properties, which rely on the assumption that both partitions associated to an edit operation are optimal.

\begin{property}[MVMO on weighted signed networks]
Consider an atomic edit operation $\pi^s \rightarrow \pi^t$ with $r>1$, where $P^s$ and $P^t$ are optimal.
The following property holds for each $u \in \myOverArrow{V}{s}{t}{}$:
    \begin{equation}
        \gamma^{left}_{u} > \gamma^{right}_{u},
    \end{equation}
    where
    \begin{align}
        \gamma^{left}_{u} &= \Omega(\{u\}, \mySimpleOverArrow{V}^s_{\pi^s(u)}) - \Omega(\{u\}, \mySimpleOverArrow{V}^s_{\pi^t(u)}),\\
        \gamma^{right}_{u} &= -\Omega(\{u\}, \mySimpleOverArrow{V}^t_{\pi^t(u)}) + \Omega(\{u\}, \mySimpleOverArrow{V}^t_{\pi^s(u)}).\\
    \end{align}
	\label{prop:MVMO}
\end{property}
\begin{proof}
Let $\pi^s \rightarrow \pi^t$ be an atomic min-$r$-edit operation applied on $P_s$ to obtain $P_t$. First, we recall a simple condition satisfied by any optimal partition for the CC problem~\cite{Charikar2017} when moving a single vertex, i.e. when $r=1$. Given an optimal partition $P_s$, the placement of any vertex $u$ in its module $M^s_{\pi^s(u)}$ is also optimal with respect to any other modules. This means that any vertex $u$ maximizes (resp. minimizes) its number of positive (resp. negative) edges in module $M^s_{\pi^s(u)}$ of $P_s$, so that moving $u$ to any other module does not improve the objective function value. We can write this condition as 
\begin{eqnarray}
\Omega(\{u\}, M^s_{\pi^s(u)}) \geq \Omega(\{u\}, M^s_{\pi^t(u)}).
\label{eq:VertexOptimalityCondition}
\end{eqnarray}
Now, when we move more than one vertex in $\pi^s \rightarrow \pi^t$, i.e. if $r>1$, this condition becomes 
\begin{eqnarray}
\Omega(\{u\}, M^s_{\pi^s(u)}) > \Omega(\{u\}, M^s_{\pi^t(u)}),
\label{eq:VertexOptimalityCondition2}
\end{eqnarray}
for each $u \in \myOverArrow{V}{s}{t}{}$, due to the atomicity assumption. 
Next, in order to take into account the existence of other moving vertices in the source and target modules of vertex $u$, Equation~\eqref{eq:VertexOptimalityCondition2} can be rewritten as $\gamma^{left}_{u} > \myOverArrow{\Delta}{s}{t}{u}$, where 
$\myOverArrow{\Delta}{s}{t}{u} = \Omega(\{u\}, M^s_{\pi^t(u)} \setminus \mySimpleOverArrow{V}^s_{\pi^t(u)}) - \Omega(\{u\}, M^s_{\pi^s(u)} \setminus \mySimpleOverArrow{V}^s_{\pi^s(u)})$. Similarly, considering the atomic min-$r$-edit operation $P^t \rightarrow P^s$, we have also $-\myOverArrow{\Delta}{t}{s}{u} > \gamma^{right}_{u}$
, where $\myOverArrow{\Delta}{t}{s}{u} = \Omega(\{u\}, M^t_{\pi^s(u)} \setminus \mySimpleOverArrow{V}^t_{\pi^s(u)}) - \Omega(\{u\}, M^t_{\pi^t(u)} \setminus \mySimpleOverArrow{V}^t_{\pi^t(u)})$. Since both $\myOverArrow{\Delta}{s}{t}{u}$ and $\myOverArrow{\Delta}{t}{s}{u}$ are defined on relations of vertex $u$ with non-moving vertices in $P^s$ and $P^t$, we have $\myOverArrow{\Delta}{s}{t}{u}=-\myOverArrow{\Delta}{t}{s}{u}$. Finally, by rewriting the previous equations we obtain $\gamma^{left}_{u} > \myOverArrow{\Delta}{s}{t}{u} = -\myOverArrow{\Delta}{t}{s}{u} > \gamma^{right}_{u}$.
\end{proof}

\begin{corollary}[MVMO on unweighted signed networks]
    The inequality $\gamma^{left}_{u} - \gamma^{right}_{u} \geq 2$ holds for each vertex $u$, on top of Property~\ref{prop:MVMO}.
    \label{corollary:MVMO-Integer}
\end{corollary}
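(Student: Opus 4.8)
The plan is to obtain the sharper bound from integrality alone, reusing the chain of strict inequalities already produced inside the proof of Property~\ref{prop:MVMO}. First I would note that on an unweighted signed graph the adjacency entries $a_{uv}$ lie in $\{-1,0,+1\}$, so every quantity of the form $\Omega(\cdot,\cdot)$ is an integer. In particular $\gamma^{left}_{u}$ and $\gamma^{right}_{u}$ are integers, and so are the auxiliary quantities $\myOverArrow{\Delta}{s}{t}{u}$ and $\myOverArrow{\Delta}{t}{s}{u}$ that appear in that proof.

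The key point is that the proof of Property~\ref{prop:MVMO} establishes more than $\gamma^{left}_{u} > \gamma^{right}_{u}$: it produces the finer chain $\gamma^{left}_{u} > \myOverArrow{\Delta}{s}{t}{u} = -\myOverArrow{\Delta}{t}{s}{u} > \gamma^{right}_{u}$, where the first strict inequality comes from the $r>1$ vertex-optimality condition applied to $\pi^s \rightarrow \pi^t$, and the second from the same condition applied to the reverse operation $\pi^t \rightarrow \pi^s$, together with the identity $\myOverArrow{\Delta}{s}{t}{u} = -\myOverArrow{\Delta}{t}{s}{u}$. Since all three values are integers and both inequalities are strict, a strict inequality between integers forces a gap of at least one unit at each step: $\gamma^{left}_{u} \geq \myOverArrow{\Delta}{s}{t}{u} + 1$ and $\myOverArrow{\Delta}{s}{t}{u} \geq \gamma^{right}_{u} + 1$. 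Adding these gives $\gamma^{left}_{u} - \gamma^{right}_{u} \geq 2$, as claimed.

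The main obstacle — indeed essentially the only thing to be careful about — is making sure that there genuinely are two independent strict inequalities with an integer sandwiched between them, rather than a single strict inequality $\gamma^{left}_{u} > \gamma^{right}_{u}$, which on its own would only yield a gap of $1$. This is why I would not treat Property~\ref{prop:MVMO} as a black box but instead reuse its internal decomposition through $\myOverArrow{\Delta}{s}{t}{u}$ and $\myOverArrow{\Delta}{t}{s}{u}$. It is also worth checking explicitly that the $r>1$ hypothesis is exactly what makes both inequalities strict: for $r=1$ only the non-strict condition~\eqref{eq:VertexOptimalityCondition} is available, and the bound would degrade accordingly.
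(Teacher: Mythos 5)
Your proof is correct and is exactly the argument the paper intends when it says the corollary is ``straightforward from the proof of Property~\ref{prop:MVMO}'': on an unweighted signed graph all the $\Omega(\cdot,\cdot)$ quantities are integers, and the two strict inequalities in the chain $\gamma^{left}_{u} > \myOverArrow{\Delta}{s}{t}{u} = -\myOverArrow{\Delta}{t}{s}{u} > \gamma^{right}_{u}$ each contribute a unit gap. You have also correctly identified the one point worth being careful about, namely that the single inequality $\gamma^{left}_{u} > \gamma^{right}_{u}$ alone would only give a gap of~$1$, so the intermediate integer quantity must be used.
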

\begin{proof}
The proof is straightforward from the proof of Property~\ref{prop:MVMO}.
\end{proof}

\begin{figure}[!h]
    \centering
    \begin{tikzpicture}[scale=0.80]
        \footnotesize
        
		\tikzstyle{nnb}=[circle,thick,draw=black!75,fill=black!10,minimum size=3mm]
		\tikzstyle{nna}=[circle,dashed,draw=red,fill=red!10,minimum size=3mm]
		\tikzstyle{pl}=[thick,draw=green!55!black!75,text=green!55!black!75]
		\tikzstyle{nl}=[thick,draw=red,text=red]
		\tikzstyle{clu}=[circle,draw=blue!50,fill=blue!20,minimum size=3.25cm,inner sep=0pt]
		
        \node (cluster1) at (-3.5,7.75) [clu, label=140:$M_{1}$] {};
        \node (cluster2) at (1.5,7.75) [clu, label=140:$M_{2}$] {};
        \node (cluster3) at (-1,3.25) [clu, label=300:$M_{3}$] {};
        
		\node[nnb]
		    (v1) at (-4.75,7) {$v_{1}$};
		\coordinate (d1UpLeft) at ($ (v1) + (0.75,1.5) $);
		\coordinate (d1UpRight) at ($ (v1) + (0.25,1.5) $);

		\node[nnb]
		    (v2) at (-2.25,7) {$v_{2}$};
		\coordinate (d2UpLeft) at ($ (v2) + (-0.75,1.5) $);
		\coordinate (d2UpRight) at ($ (v2) + (-0.25,1.5) $);
		\node[nnb]
		    (v3) at (1.5,7) {$v_{3}$};
		\coordinate (d3UpLeft) at ($ (v3) + (-0.25,1.5) $);
		\coordinate (d3UpRight) at ($ (v3) + (0.25,1.5) $);
        \node[nnb]
        (v4) at (0.25,3.75) {$v_{4}$};
		\coordinate (d4DownLeft) at ($ (v4) + (-1.15,-1.5) $);
		\coordinate (d4DownRight) at ($ (v4) + (-0.75,-1.5) $);
        \node[nnb]
        (v5) at (-2,3.75) {$v_{5}$};
		\coordinate (d5DownLeft) at ($ (v5) + (0.75,-1.5) $);
		\coordinate (d5DownRight) at ($ (v5) + (0.25,-1.5) $);
 
		\node[nna] (v6) at (2.75,8) {$v_{1}$};
		\node[nna] (v7) at (1.5,9) {$v_{2}$};
		\node[nna] (v8) at (0.25,8) {$v_{4}$};
		\node[nna] (v9) at (-0.85,2.75) {$v_{3}$};
		\node[nna] (v10) at (-3.75,8.25) {$v_{5}$};

        \draw[pl] (v1)  -- (v2)  node [midway, below] {$a_{12}$};
        \draw [nl] (v1) to [bend left=18] (v3);
        \draw[nl] (v2)  -- (v3)  node [midway, below] {};
        
        \draw[nl] (v1)  -- (v4)  node [midway, below] {};
        \draw[nl] (v1)  -- (v5)  node [midway, left] {$a_{15}$};
        \draw[pl] (v2)  -- (v4)  node [midway, below] {};
        \draw[nl] (v2)  -- (v5)  node [midway, below] {};
        
        \draw[nl] (v3)  -- (v4)  node [midway, right] {$a_{34}$};
        \draw[nl] (v3)  -- (v5)  node [midway, below] {};
        \draw[pl] (v4)  -- (v5)  node [midway, below] {};
	\end{tikzpicture}
	\caption{Illustrative example, where five vertices $v_1$, $v_2$, $v_3$, $v_4$ and $v_5$ move from their source modules to their target ones. Dashed circles indicate the moving vertices, when they are in their target modules after the edit operation. Note that we do not show non-moving vertices for the sake of simplicity. Positive (resp. negative) edges between moving vertices are drawn in green (resp. red).}
	\label{fig:IllustrativeExampleEdit}
\end{figure}
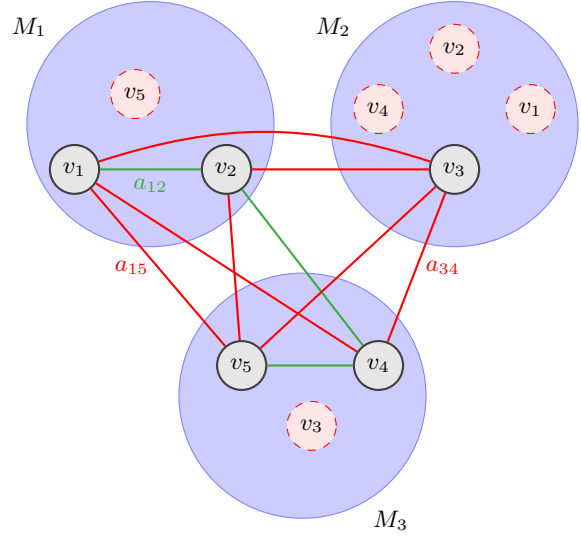

Figure~\ref{fig:IllustrativeExampleEdit} can be used to illustrate Property~\ref{prop:MVMO} and Corollary~\ref{corollary:MVMO-Integer}. For instance, for vertex $v_1$, the equation in Property~\ref{prop:MVMO} becomes 
\begin{equation}
    \label{eq:generalPropertyWithoutDeltaExample}
    \centering
    a_{12} - a_{13} > a_{15} - a_{12} - a_{14}.
\end{equation}
Similarly, for vertex $v_3$, the equation in Property~\ref{prop:MVMO} becomes 
\begin{equation}
    \label{eq:generalPropertyWithoutDeltaExample2}
    \centering
    - a_{34} - a_{35} > a_{13} + a_{23} + a_{34}.
\end{equation}

\subsection{Tractable Cases of the MVMO Property}
\label{subsec:Implications-MVMO-property}

Notice that the MVMO property requires $\pi^s$ and $\pi^t$ to be known, which makes this property not very useful for computational purposes (in Algorithm~\ref{algo:CoNS}).
In this section, we analyze the MVMO property when only partial information of $\pi^t$ is known (lines~\ref{algoline:CoNS-externalPruning1} and \ref{algoline:CoNS-externalPruning2} 
in Algorithm~\ref{algo:CoNS}).

We start by analysing some special relational patterns for $2$-edit and $3$-edit operations. Lemma~\ref{lemma:MVMO-2Vertices} focuses on 2-edit operations. Recall that $\widetilde{E} = \{(u,v) \in E \ |\ u,v \in \myOverArrow{V}{s}{t}{}\ \land\ (\pi^s(u) = \pi^s(v) \lor \pi^t(u) = \pi^s(v) \lor \pi^s(u) = \pi^t(v) \lor \pi^t(u) = \pi^t(v))\}$.
 
\begin{figure*}[!ht]
    \centering
    \includegraphics[width=1\textwidth]{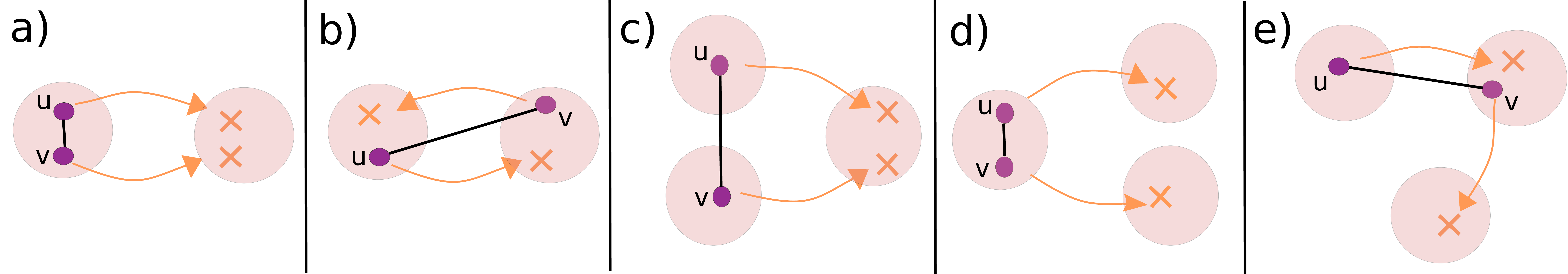}
    \caption[Fig]{All 2-edit operation scenarios, where $(u,v) \in \widetilde{E}$. Note that only scenarios (\ref{fig:2edit-operations}a) and (\ref{fig:2edit-operations}b) are atomic $2$-edit operations, with $a_{uv}>0$ and $a_{uv}<0$, respectively.}
    \label{fig:2edit-operations}
\end{figure*}

\begin{lemma}[MVMO $2$-edit]
	Consider an atomic $2$-edit operation $\pi^s \rightarrow \pi^t$, where $\myOverArrow{V}{s}{t}{} = \{u, v\}$. Since it is an atomic edit operation, we have $(u,v) \in \widetilde{E}$. Then
	\begin{enumerate}
	    \item If $\left( \pi^s(u) = \pi^s(v)) \lor (\pi^t(u) = \pi^t(v) \right)$, then $a_{uv} > 0$. 
	    \item 
	    If $\left( \pi^s(u) = \pi^t(v)) \lor (\pi^t(u) = \pi^s(v) \right)$, then $a_{uv} < 0$.
	\end{enumerate}
	\label{lemma:MVMO-2Vertices}
\end{lemma}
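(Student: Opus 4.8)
The plan is to deduce both claims from the MVMO inequality (Property~\ref{prop:MVMO}), which applies here since the operation is atomic with $r=2$ and, under the standing assumption of this section, $P^s$ and $P^t$ are optimal. First I would fix notation for the four possible ``interaction relations'' between $u$ and $v$: write $(i)$ for $\pi^s(u)=\pi^s(v)$, $(ii)$ for $\pi^t(u)=\pi^t(v)$, $(iii)$ for $\pi^s(u)=\pi^t(v)$ and $(iv)$ for $\pi^t(u)=\pi^s(v)$. Atomicity, through Property~\ref{prop:InteractionConnectivity}, forces $\widetilde{G}[\{u,v\}]$ to be connected, hence $(u,v)\in\widetilde{E}\subseteq E$; in particular $a_{uv}\neq 0$, and at least one of $(i)$--$(iv)$ holds (this is the remark about $(u,v)\in\widetilde{E}$ recalled in the statement).

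The core of the argument is a short computation specialising Property~\ref{prop:MVMO} to $\myOverArrow{V}{s}{t}{}=\{u,v\}$. Each of the four sets $\mySimpleOverArrow{V}^s_{\pi^s(u)}$, $\mySimpleOverArrow{V}^s_{\pi^t(u)}$, $\mySimpleOverArrow{V}^t_{\pi^t(u)}$, $\mySimpleOverArrow{V}^t_{\pi^s(u)}$ is a subset of $\{u,v\}$; since $u$ lies in its source module of $P^s$ and in its target module of $P^t$ but, being a moving vertex, in neither its target module of $P^s$ nor its source module of $P^t$, one checks that these sets contain $v$ exactly when $(i)$, $(iv)$, $(ii)$, $(iii)$ hold, respectively. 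As $G$ has no loops, $\Omega(\{u\},W)=a_{uv}$ if $v\in W$ and $\Omega(\{u\},W)=0$ otherwise, for any $W\subseteq\{u,v\}$. Substituting into $\gamma^{left}_u$ and $\gamma^{right}_u$, the MVMO inequality $\gamma^{left}_u>\gamma^{right}_u$ reduces to
\[
 a_{uv}\bigl( [(i)] + [(ii)] - [(iii)] - [(iv)] \bigr) > 0 ,
\]
where $[\,\cdot\,]$ denotes the indicator of the relation. Writing $C$ for the integer in parentheses, and recalling $a_{uv}\neq 0$, this forces $C\neq 0$ and $a_{uv}$, $C$ to have the same sign.

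It then remains to observe that $u$ and $v$ both change module, which rules out certain relations occurring simultaneously: $(i)\wedge(iii)$ would force $\pi^s(v)=\pi^t(v)$, $(i)\wedge(iv)$ would force $\pi^s(u)=\pi^t(u)$, $(ii)\wedge(iii)$ would force $\pi^s(u)=\pi^t(u)$, and $(ii)\wedge(iv)$ would force $\pi^s(v)=\pi^t(v)$ --- each a contradiction. Hence, if $(i)$ or $(ii)$ holds then $(iii)$ and $(iv)$ both fail, so $C=[(i)]+[(ii)]\ge 1>0$ and therefore $a_{uv}>0$, which is case~1; symmetrically, if $(iii)$ or $(iv)$ holds then $(i)$ and $(ii)$ both fail, so $C=-[(iii)]-[(iv)]\le -1<0$ and $a_{uv}<0$, which is case~2. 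I would also note, using Corollary~\ref{corollary:MVMO-Integer} ($\gamma^{left}_u-\gamma^{right}_u\ge 2$), that in fact $|C|=2$, i.e.\ necessarily $(i)\wedge(ii)$ with $a_{uv}=+1$ or $(iii)\wedge(iv)$ with $a_{uv}=-1$, recovering exactly the two atomic scenarios (a)--(b) of Figure~\ref{fig:2edit-operations}.

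The only delicate point is the bookkeeping in the second step: correctly deciding, for a two-element moving set, which of the four ``$\mySimpleOverArrow{V}$'' modules can contain $v$, while keeping track of whether one is in $P^s$ or $P^t$ and of the fact that a moving vertex never sits in its own ``crossed'' modules $M^s_{\pi^t(u)}$ or $M^t_{\pi^s(u)}$. Once the four $\Omega$-values are pinned down, the remainder is an elementary sign argument together with the four incompatibility checks above.
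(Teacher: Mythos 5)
Your proof is correct and follows essentially the same route as the paper's: both specialise the MVMO inequality (Property~\ref{prop:MVMO}) together with its integer strengthening (Corollary~\ref{corollary:MVMO-Integer}) to the two-vertex case and analyse which coincidences among $\pi^s(u),\pi^t(u),\pi^s(v),\pi^t(v)$ can occur. The only difference is presentational: you compress the paper's five-scenario enumeration (Figure~\ref{fig:2edit-operations}) into the single identity $a_{uv}\bigl([(i)]+[(ii)]-[(iii)]-[(iv)]\bigr)>0$ plus the four incompatibility checks, which is a tidier bookkeeping of the same argument.
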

\begin{proof}
From Property~\ref{prop:EdgeConnectivity}, we have $(u,v) \in E$.
In this proof, we use an exhausting strategy:
moving vertices $u$ and $v$, with $(u,v) \in \widetilde{E}$, can be in one of the five scenarios presented in Figure~\ref{fig:2edit-operations}. 
Since Corollary~\ref{corollary:MVMO-Integer} holds for each vertex $u \in \myOverArrow{V}{s}{t}{}$, only scenarios (\ref{fig:2edit-operations}a) and (\ref{fig:2edit-operations}b) are atomic. 
Therefore, $a_{uv}>0$ for scenario (a) whereas $a_{uv}<0$ for scenario (b): 
\vspace{-0.2cm}
\begin{itemize}
    \item[\textit{a)}] We have $(\gamma^{left}_{u} = a_{uv}) > (\gamma^{right}_{u} = -a_{uv})$ and $(\gamma^{left}_{v} = a_{uv}) > (\gamma^{right}_{v} = -a_{uv})$. We see that $a_{uv}$ must be positive. Since Corollary~\ref{corollary:MVMO-Integer} is satisfied with $a_{uv}>0$, it is atomic.
    \item[\textit{b)}] We have $(\gamma^{left}_{u} = -a_{uv}) > (\gamma^{right}_{u} = a_{uv})$ and $(\gamma^{left}_{v} = -a_{uv}) > (\gamma^{right}_{v} = a_{uv})$. We see that $a_{uv}$ must be negative. Since Corollary~\ref{corollary:MVMO-Integer} is satisfied with $a_{uv}<0$, it is atomic.
    \item[\textit{c)}] We have $(\gamma^{left}_{u} = 0) > (\gamma^{right}_{u} = -a_{uv})$ and $(\gamma^{left}_{v} = 0) > (\gamma^{right}_{v} = -a_{uv})$. We see that $a_{uv}$ must be positive and this satisfies Property~\ref{prop:MVMO}. However, Corollary~\ref{corollary:MVMO-Integer} is not satisfied, which makes this edit operation decomposable.
    \item[\textit{d)}] We have $(\gamma^{left}_{u} = a_{uv}) > (\gamma^{right}_{u} = 0)$ and $(\gamma^{left}_{v} = a_{uv}) > (\gamma^{right}_{v} = 0)$. We see that $a_{uv}$ must be positive and this satisfies Property~\ref{prop:MVMO}. However, Corollary~\ref{corollary:MVMO-Integer} is not satisfied, which makes this edit operation decomposable.
    \item[\textit{e)}] We have $(\gamma^{left}_{u} = -a_{uv}) > (\gamma^{right}_{u} = 0)$ and $(\gamma^{left}_{v} = 0) > (\gamma^{right}_{v} = -a_{uv})$. We see that $a_{uv}$ must be negative and this satisfies Property~\ref{prop:MVMO}. However, Corollary~\ref{corollary:MVMO-Integer} is not satisfied, which makes this edit operation decomposable.
\end{itemize}
\end{proof}

Next, we focus on 3-edit operations. We verify some cases in which Lemma~\ref{lemma:MVMO-2Vertices} is also valid.

\begin{figure*}[!ht]
    \centering
    \includegraphics[width=1\textwidth]{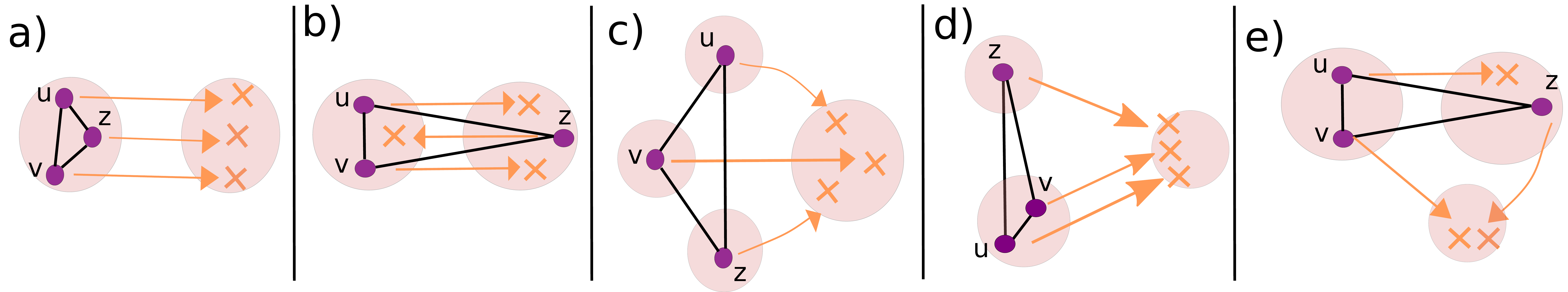}
    \caption[Fig]{Five of the possible $3$-edit operation scenarios. The full list can be found in the Appendix (Figure~\ref{fig-appendix:3edit-operations}). In this figure, all edit operations are atomic.}
    \label{fig:3edit-some-operations}
\end{figure*}

\begin{lemma}[MVMO $3$-edit]
	Consider an atomic $r$-edit operation $\pi^s \rightarrow \pi^t$ with $r=3$, where $\myOverArrow{V}{s}{t}{} = \{u, v, z\}$. Since it is an atomic edit operation, we have $(u,v), (u,z), (v,z) \in \widetilde{E}$. Lemma~\ref{lemma:MVMO-2Vertices} holds true for each pair $(u,v)$ of vertices, with two exceptions:
	\begin{enumerate}
	    \item all vertices in $\myOverArrow{V}{s}{t}{}$ are in the same source module and are moved into the same target module (Figure~\ref{fig:3edit-some-operations}a),
	    \item only two of $\myOverArrow{V}{s}{t}{}$ are in the same source module and are moved into the source module of the third moving vertex. Reciprocally, the third moving vertex moves into the source module of the others' source module (Figure~\ref{fig:3edit-some-operations}b). 
	\end{enumerate}
	\label{lemma:MVMO-3Vertices}
\end{lemma}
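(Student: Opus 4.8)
The plan is to reproduce, for $r=3$, the exhaustive case analysis used in the proof of Lemma~\ref{lemma:MVMO-2Vertices}. First I would enumerate every configuration that a $3$-edit operation with $\myOverArrow{V}{s}{t}{} = \{u,v,z\}$ can take, classified by how the source and the target modules of $u$, $v$ and $z$ overlap: how many distinct source modules the three vertices occupy ($3$, $2$ or $1$), how many distinct target modules they occupy, and whether the source module of one moving vertex equals the target module of another. This is exactly the list displayed in Figure~\ref{fig-appendix:3edit-operations}, and atomicity prunes most of it: by Property~\ref{prop:InteractionConnectivity} the interaction subgraph $\widetilde{G}[\myOverArrow{V}{s}{t}{}]$ must be connected, hence a triangle on $\{u,v,z\}$ --- the generic situation, in which all three pairs are interaction edges as in the statement --- or, in the degenerate sub-cases, a path. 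For each surviving configuration I would record, for every $w \in \{u,v,z\}$, which of the other two moving vertices lie in $w$'s source and target modules, in $P^s$ and in $P^t$; this turns $\gamma^{left}_w$ and $\gamma^{right}_w$ into explicit linear forms in $a_{uv}$, $a_{uz}$ and $a_{vz}$, and I would then combine Corollary~\ref{corollary:MVMO-Integer} (that is, $\gamma^{left}_w - \gamma^{right}_w \ge 2$ for each $w$) with $a_{uv},a_{uz},a_{vz}\in\{-1,0,1\}$ and the fact that a pair of moving vertices carries a nonzero weight exactly when the corresponding edge is present.

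The dichotomy I would then establish is the following. In every configuration whose interaction subgraph is a triangle, the inequality attached to a pair $(w,w')$ involves only $a_{ww'}$ --- the third moving vertex cancels out of the relevant $\gamma$ terms, exactly as in the proof of Lemma~\ref{lemma:MVMO-2Vertices} --- so it forces $a_{ww'}\ge 1$ when $w$ and $w'$ share a source or a target module, and $a_{ww'}\le -1$ when $\pi^s(w)=\pi^t(w')$; hence Lemma~\ref{lemma:MVMO-2Vertices} carries over to each of the three pairs. The only configurations in which this decoupling breaks down are those in which a third moving vertex co-inhabits the module shared by a pair, which are precisely configuration~(\ref{fig:3edit-some-operations}a) (all three vertices lie in one source module $M$ and move into one target module $M'$) and configuration~(\ref{fig:3edit-some-operations}b) (two vertices share a source module $A$ and are swapped with the third vertex's source module $B$). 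In case~(a) one gets $\gamma^{left}_w - \gamma^{right}_w = 2(a_{ww'}+a_{ww''})$ for each $w$, and in case~(b) one gets $a_{uv}-a_{uz}\ge 1$, $a_{uv}-a_{vz}\ge 1$ and $a_{uz}+a_{vz}\le -1$: in both cases the binding constraints involve only sums or differences of two edge weights, so the interaction subgraph may legitimately degenerate to a path whose two endpoints share a module yet carry no edge between them --- for instance $a_{uv}=0$, $a_{uz}=a_{vz}=1$ in case~(a), or $a_{uv}=0$, $a_{uz}=a_{vz}=-1$ in case~(b), where in each case $u$ and $v$ share both their source and their target module although $a_{uv}=0$. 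I would exhibit such witnesses, check that they satisfy all the MVMO inequalities of Corollary~\ref{corollary:MVMO-Integer} and are consistent with atomicity (the interaction subgraph being the connected path through $z$), and conclude that Lemma~\ref{lemma:MVMO-2Vertices} can fail for a pair precisely in configurations~(\ref{fig:3edit-some-operations}a) and~(\ref{fig:3edit-some-operations}b).

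The step I expect to be the main obstacle is the bookkeeping. Once the cross-coincidences $\pi^s(w)=\pi^t(w')$ are taken into account there are quite a few $3$-edit configurations, and for each one the four membership sets and the three resulting inequalities must be computed without error and matched against Figure~\ref{fig-appendix:3edit-operations}; checking that the enumeration is complete and leaves no further exception is equally delicate. The conceptual content --- a pair escapes the sign rule of Lemma~\ref{lemma:MVMO-2Vertices} exactly when a third moving vertex sits in the same module as the pair in the same partition, so that its weight enters the pair's MVMO inequality and can compensate for a vanishing weight on the pair itself --- becomes visible as soon as the relevant $\Omega$-terms are written down; turning it into a complete case check is the routine but lengthy part.
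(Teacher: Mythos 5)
Your proposal is correct and follows essentially the same route as the paper: an exhaustive enumeration of the $3$-edit configurations (the $17$ scenarios of Figure~\ref{fig-appendix:3edit-operations}), writing $\gamma^{left}_w$ and $\gamma^{right}_w$ as linear forms in $a_{uv},a_{uz},a_{vz}$, applying Corollary~\ref{corollary:MVMO-Integer} to each, and observing that the pairwise constraints decouple except in configurations~(\ref{fig:3edit-some-operations}a) and~(\ref{fig:3edit-some-operations}b), where the coupled inequalities admit a vanishing weight on one pair. Your explicit witnesses for the two exceptions and the derived inequalities ($2(a_{ww'}+a_{ww''})\ge 2$ in case~(a); $a_{uv}-a_{uz}\ge 1$, $a_{uv}-a_{vz}\ge 1$, $a_{uz}+a_{vz}\le -1$ in case~(b)) match the paper's computations exactly.
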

\begin{proof}
Without these two exceptions, $G[\myOverArrow{V}{s}{t}{}]$ forms a triangle. Nevertheless, in these two exceptions, $G[\myOverArrow{V}{s}{t}{}]$ can form a path, and, as will see next, this path ensures that moving vertices from the same source module are positively connected.
Similar to Lemma~\ref{lemma:MVMO-2Vertices}, the proof is based on all possible $17$ scenarios of three moving vertices with $(u,v), (u,z), (v,z) \in \widetilde{E}$. Some of those scenarios are shown in Figure~\ref{fig:3edit-some-operations}, and we provide the full list in the Appendix (Figure~\ref{fig-appendix:3edit-operations}). The proof is straightforward when one adapts Corollary~\ref{corollary:MVMO-Integer} to those scenarios. We verify below only those in Figure~\ref{fig:3edit-some-operations}. The full list of verifications is found in Appendix~\ref{subsecapx:MVMO-proofs-3edit}.
%
\begin{itemize}
    \item[\textit{a)}] We have $(\gamma^{left}_{u} = a_{uv} + a_{uz}) > (\gamma^{right}_{u} = -a_{uv} -a_{uz})$, $(\gamma^{left}_{v} = a_{uv} + a_{vz}) > (\gamma^{right}_{v} = -a_{uv} -a_{vz})$ and $(\gamma^{left}_{z} = a_{uz} + a_{vz}) > (\gamma^{right}_{z} = -a_{uz} -a_{vz})$. We see that $a_{uv}$, $a_{uz}$ and $a_{vz}$ cannot be negative. Since Corollary~\ref{corollary:MVMO-Integer} is satisfied with a positive path formed in $G[\myOverArrow{V}{s}{t}{}]$ for each vertex $u \in \myOverArrow{V}{s}{t}{}$, it is atomic.
    \item[\textit{b)}] We have $(\gamma^{left}_{u} = a_{uv} - a_{uz}) > (\gamma^{right}_{u} = -a_{uv} + a_{uz})$, $(\gamma^{left}_{v} = a_{uv} - a_{vz}) > (\gamma^{right}_{v} = -a_{uv} + a_{vz})$ and $(\gamma^{left}_{z} = -a_{uz} - a_{vz}) > (\gamma^{right}_{z} = a_{uz} + a_{vz})$. We see that $a_{uv}$ (resp. $a_{uz}$ and $a_{vz}$) cannot be negative (resp. positive). Since Corollary~\ref{corollary:MVMO-Integer} is satisfied with a path formed in $G[\myOverArrow{V}{s}{t}{}]$, it is atomic.
    \item[\textit{c)}] We have $(\gamma^{left}_{u} = 0) > (\gamma^{right}_{u} = -a_{uv} - a_{uz})$, $(\gamma^{left}_{v} = 0) > (\gamma^{right}_{v} = -a_{uv} - a_{vz})$ and $(\gamma^{left}_{z} = 0) > (\gamma^{right}_{z} = -a_{uz} - a_{vz})$. We see that $a_{uv}$, $a_{uz}$ and $a_{vz}$ must be positive. Since Corollary~\ref{corollary:MVMO-Integer} is satisfied with a positive triangle formed in $G[\myOverArrow{V}{s}{t}{}]$, it is atomic.
    \item[\textit{d)}] We have $(\gamma^{left}_{u} = a_{uv}) > (\gamma^{right}_{u} = -a_{uv} -a_{uz})$, $(\gamma^{left}_{v} = a_{uv}) > (\gamma^{right}_{v} = -a_{uv} -a_{vz})$ and $(\gamma^{left}_{z} = 0) > (\gamma^{right}_{z} = -a_{uz} -a_{vz})$. We see that $a_{uv}$, $a_{uz}$ and $a_{vz}$ must be positive. Since Corollary~\ref{corollary:MVMO-Integer} is satisfied with a positive triangle formed in $G[\myOverArrow{V}{s}{t}{}]$, it is atomic.
    \item[\textit{e)}] We have $(\gamma^{left}_{u} = a_{uv} - a_{uz}) > (\gamma^{right}_{u} = 0)$, $(\gamma^{left}_{v} = a_{uv}) > (\gamma^{right}_{v} = -a_{vz})$ and $(\gamma^{left}_{z} = 0) > (\gamma^{right}_{z} = a_{uz} - a_{vz})$. We see that $a_{uv}$ and $a_{vz}$ (resp. $a_{uz}$) must be positive (resp. negative). Since Corollary~\ref{corollary:MVMO-Integer} is satisfied with a triangle formed in $G[\myOverArrow{V}{s}{t}{}]$, it is atomic.
\end{itemize}
\end{proof}

Unlike Lemma~\ref{lemma:MVMO-3Vertices}, we cannot completely generalize Lemma~\ref{lemma:MVMO-2Vertices} for more than three moving vertices. Nevertheless, there are some circumstances, where Lemma~\ref{lemma:MVMO-2Vertices} is still valid for a subset of $\myOverArrow{V}{s}{t}{}$, and this is formalized in Lemma~\ref{lemma:MVMO-d-Vertices}. 


\begin{lemma}[MVMO $r$-edit]
    Consider an atomic $r$-edit operation $\pi^s \rightarrow \pi^t$ with $r\geq 4$ and a vertex $u\in\myOverArrow{V}{s}{t}{}$.
	If $2 \leq |u \cup \myOverArrow{V}{s}{t}{u}| \leq 3$, Lemma~\ref{lemma:MVMO-2Vertices} holds true for each pair $(u,v)$ with $v \in \myOverArrow{V}{s}{t}{u}$.
	\label{lemma:MVMO-d-Vertices}
\end{lemma}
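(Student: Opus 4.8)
The plan is to work only with the MVMO inequality attached to the single vertex $u$, and to show that — when $u$ has at most two interaction-neighbours — this one inequality already pins down the signs claimed by Lemma~\ref{lemma:MVMO-2Vertices}. The key preliminary observation is that each of the four terms $\Omega(\{u\},\cdot)$ occurring in $\gamma^{left}_u$ and $\gamma^{right}_u$ (Property~\ref{prop:MVMO}) is a sum of weights $a_{uv}$ over moving vertices $v$ lying in one of the two modules of $u$ in $P^s$, or in one of the two modules of $u$ in $P^t$. For such a $v$, either $(u,v)\notin E$ and the contribution $a_{uv}$ vanishes, or $(u,v)\in E$ and then $(u,v)\in\widetilde{E}$ by definition, so $v\in\myOverArrow{V}{s}{t}{u}$. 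Since $2\le|u\cup\myOverArrow{V}{s}{t}{u}|\le 3$ means $|\myOverArrow{V}{s}{t}{u}|\in\{1,2\}$, we may therefore write
\[
\gamma^{left}_u-\gamma^{right}_u=\sum_{v\in\myOverArrow{V}{s}{t}{u}}c_v\,a_{uv},
\]
a sum of at most two terms in which each coefficient $c_v$ is determined solely by which of $\pi^s(u),\pi^t(u)$ coincide with which of $\pi^s(v),\pi^t(v)$.

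The first step is to evaluate the $c_v$. A moving vertex $v$ shares at most one module with $u$ in $P^s$ and at most one in $P^t$, so there are only a handful of coincidence patterns; going through them gives $c_v\in\{-2,-1,1,2\}$, with $|c_v|=2$ exactly for the two ``doubly-matching'' patterns (the co-moving pattern $\pi^s(v)=\pi^s(u)\wedge\pi^t(v)=\pi^t(u)$ and the swap pattern $\pi^s(v)=\pi^t(u)\wedge\pi^t(v)=\pi^s(u)$), the ``no coincidence'' pattern $c_v=0$ being impossible for $v\in\myOverArrow{V}{s}{t}{u}$. The decisive point to record is that the sign of $c_v$ is always the sign that Lemma~\ref{lemma:MVMO-2Vertices} predicts for $a_{uv}$: one verifies that $c_v>0$ exactly when $\pi^s(u)=\pi^s(v)$ or $\pi^t(u)=\pi^t(v)$, that $c_v<0$ exactly when $\pi^s(u)=\pi^t(v)$ or $\pi^t(u)=\pi^s(v)$, and that for $v\in\myOverArrow{V}{s}{t}{u}$ these two alternatives are mutually exclusive and exhaustive.

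The second step is an elementary estimate. Since $\pi^s\to\pi^t$ is atomic with $r\ge 4>1$ and $P^s,P^t$ are optimal, Corollary~\ref{corollary:MVMO-Integer} yields $\sum_{v\in\myOverArrow{V}{s}{t}{u}}c_v\,a_{uv}\ge 2$, and $a_{uv}\in\{-1,1\}$ for every $v\in\myOverArrow{V}{s}{t}{u}$ because such $v$ are adjacent to $u$. Fix $v\in\myOverArrow{V}{s}{t}{u}$ and suppose $a_{uv}\ne\operatorname{sign}(c_v)$; then $c_v a_{uv}=-|c_v|\le -1$, while the remaining term, if any, has the form $c_w a_{uw}\le|c_w|\le 2$, so the entire sum is $\le 1<2$, a contradiction. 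Hence $a_{uv}=\operatorname{sign}(c_v)$ for every $v\in\myOverArrow{V}{s}{t}{u}$, which combined with the sign table of the first step is exactly the conclusion of Lemma~\ref{lemma:MVMO-2Vertices} for the pair $(u,v)$.

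I expect the only genuine effort to lie in the routine casework of the first step — enumerating the coincidence patterns, checking $c_v\ne 0$ and the matching of $\operatorname{sign}(c_v)$ with Lemma~\ref{lemma:MVMO-2Vertices} in each, and carefully justifying, from the definition of $\widetilde{E}$, that $\gamma^{left}_u$ and $\gamma^{right}_u$ involve only the weights on edges incident to $\myOverArrow{V}{s}{t}{u}$. Unlike the proofs of Lemmas~\ref{lemma:MVMO-2Vertices} and~\ref{lemma:MVMO-3Vertices}, no enumeration of global $2$-edit or $3$-edit scenarios is needed: we invoke the MVMO inequality of the single vertex $u$ only, and the bound on the number of its interaction-neighbours does the rest. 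The two exceptional path-configurations of Lemma~\ref{lemma:MVMO-3Vertices} do not reappear as exceptions here because the statement restricts attention to pairs $(u,v)$ with $v$ a true interaction-neighbour of $u$; for such pairs the estimate above still forces $a_{uv}=\operatorname{sign}(c_v)$ (in particular $a_{uv}>0$ when $u$ and $v$ co-move).
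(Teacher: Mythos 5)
Your proof is correct, but it takes a genuinely different route from the paper's. The paper disposes of this lemma in two sentences: it observes that the cardinality bound makes the local configuration $u \cup \myOverArrow{V}{s}{t}{u}$ ``represent one of the scenarios'' already enumerated for $2$- and $3$-edit operations, and then dismisses the two exceptional scenarios of Lemma~\ref{lemma:MVMO-3Vertices} on the grounds that they would themselves satisfy the definition of an atomic (imbalance-preserving) sub-operation, which is incompatible with the atomicity of the ambient $r$-edit and forces $|u \cup \myOverArrow{V}{s}{t}{u}| > 3$. You bypass the scenario enumeration entirely: you isolate the single MVMO inequality attached to $u$, rewrite $\gamma^{left}_{u} - \gamma^{right}_{u}$ as $\sum_{v \in \myOverArrow{V}{s}{t}{u}} c_v\, a_{uv}$ with $c_v \in \{-2,-1,1,2\}$ and $\operatorname{sign}(c_v)$ equal to the sign that Lemma~\ref{lemma:MVMO-2Vertices} predicts, and then let the integrality gap of Corollary~\ref{corollary:MVMO-Integer} (difference at least $2$), combined with the bound of at most two summands, force $a_{uv} = \operatorname{sign}(c_v)$ via the estimate $-|c_v| + |c_w| \leq 1 < 2$. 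This is self-contained and arguably tighter than the paper's argument, which leaves implicit both why the $r$-edit MVMO inequality of $u$ coincides with the one it would have in a standalone $2$- or $3$-edit scenario and exactly how the exceptional path configurations are excluded; your sign table and the arithmetic make both points explicit, and they clarify that the only inputs needed are $u$'s own inequality and the bound $|\myOverArrow{V}{s}{t}{u}| \leq 2$. What the paper's reduction buys in exchange is brevity and reuse of the already-verified scenario tables. Two conventions your argument relies on are both consistent with the paper: $\myOverArrow{V}{s}{t}{u}$ is read as the set of interaction-graph neighbours of $u$, so that $(u,v) \in E$ and $a_{uv} \in \{-1,+1\}$ for every $v \in \myOverArrow{V}{s}{t}{u}$, and the graph is unweighted, so that Corollary~\ref{corollary:MVMO-Integer} supplies the gap of $2$ rather than a bare strict inequality.
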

\begin{proof}
The condition of $2 \leq |u \cup \myOverArrow{V}{s}{t}{u}| \leq 3$ ensures 
that subset $u \cup \myOverArrow{V}{s}{t}{u}$ represents one of the scenarios in $2$- or $3$-edit operation. 
We note that two exceptional scenarios mentioned in Lemma~\ref{lemma:MVMO-3Vertices} are not of interest here, because they necessarily satisfy the definition of atomic edit operation, hence $|u \cup \myOverArrow{V}{s}{t}{u}| > 3$. 
\end{proof}

For the sake of simplicity, in Algorithm~\ref{algo:internalPruning}, we define the function \textit{intMVMO}($G$, $\pi^s$, $\myOverArrow{V}{s}{t}{}$) which indicates whether $\pi^s \rightarrow \pi^t$ satisfies Lemmas~\ref{lemma:MVMO-2Vertices}.1 and~\ref{lemma:MVMO-3Vertices}, when the target modules of $\myOverArrow{V}{s}{t}{}$ are not known. Likewise, in Algorithm~\ref{algo:externalPruning} we define the function \textit{extMVMO}($G$, $\pi^s$, $\pi^t$, $\myOverArrow{V}{s}{t}{}$),
which indicates whether $\pi^s \rightarrow \pi^t$ satisfies Lemmas~\ref{lemma:MVMO-2Vertices}.2, \ref{lemma:MVMO-3Vertices} and~\ref{lemma:MVMO-d-Vertices}, when the target modules of $\myOverArrow{V}{s}{t}{}$ are partially or completely known 
(\ref{algoline:CoNS-externalPruning1}, \ref{algoline:CoNS-externalPruning2} and \ref{algoline:CoNS-optimalityCond2} of Algorithm~\ref{algo:CoNS}).

\section{Dataset}
\label{sec:Dataset}
This section is dedicated to the description of the dataset used in our experiments. As mentioned in the introduction, in this article we focus on \textit{unweighted} graphs as we have done in~\cite{Arinik2020b}. Application-wise, unweighted signed networks fit certain modeling situations and methodological choices. Indeed, in some works, binary values better represent the studied relations (e.g. alliance/conflict between countries in international relationships~\cite{Doreian2015}), or the authors prefer to use such values for practical reasons (e.g. limited or unreliable information~\cite{Esteban2012}).

We conduct our experiments on two datasets of random signed networks. All these data as well as the corresponding optimal solutions are publicly available online\footnote{DOI: \href{https://doi.org/10.6084/m9.figshare.15043911}{10.6084/m9.figshare.15043911}\label{foot:figshare}}.

\textbf{Dataset1:} We generate these networks through the random signed network generator proposed in our previous work~\cite{Arinik2020b}, which is publicly available online\footnote{\url{https://github.com/CompNet/SignedBenchmark}.}. For complete unweighted signed networks, this model relies on only three parameters: $n$ (number of vertices), $\ell_0$ (initial number of modules) and $q_{m}$ (proportion of misplaced edges, i.e. edges meant to be frustrated by construction). Moreover, we make the assumption that the proportion of misplaced edges is the same inside and between the modules. When it comes to incomplete unweighted signed networks, we introduce two more parameters, which are the density $d$ of the graph and the proportion $q_{neg}$ of the negative edges. The last parameter $q_{neg}$ is defined as $|E^-|/|E|$ and allows controlling the ratio of positive to negative edges. For complete unweighted signed networks with $d=1$, we generate $20$ replications for parameter values $\ell_0 = 3$, $n \in \{32, 36, 40, 45, 50\}$ and $q_m = \{0.1, 0.2, 0.3, 0.4, 0.5, 0.6\}$. In these networks, the value of $q_{neg}$ with the considered parameters is approximately $0.7$. For incomplete unweighted signed networks with $d = \{0.25, 0.50\}$, we generate $20$ replications for parameter values $\ell_0 = 3$, $n \in \{32, 36, 40\}$, $q_m = \{0.1, 0.2, 0.3, 0.4, 0.5, 0.6\}$ and $q_{neg} = \{0.3, 0.5, 0.7\}$. In total, we produce $600$ and $1{,}080$ instances for complete and incomplete networks, respectively, which makes a total of $1{,}680$ instances. We use this dataset in Section~\ref{subsec:EvaluationEnumCC}.

\textbf{Dataset2:} This dataset is different than \textit{Dataset1} in that the optimal solution for a generated network is known by construction. This allows to consider relatively large graph orders $n$ without being limited by long running time of an exact partitioning method. For given $n$, $d$ and $\ell_0$, we first create a perfectly structurally balanced (i.e., internally positive and externally negative) signed network with a built-in module structure. Clearly, the underlying module structure constitutes the optimal partition. Then, in order to take into account different positive to negative ratio values for internal and external edges we generate several signed networks by perturbing the initial signed network without affecting its underlying optimal partition, thanks to its definition of stability range~\cite{Nowozin2009}. We generate signed networks with parameter values $n \in \{30, 40, 50, 60, 70, 90\}$, $d \in \{0.25, 1.00\}$ and $\ell_0 = \{2,4,6\}$ through our random signed network generator, which is publicly available online\footnote{\label{github:SignedStabilityBenchmark}\url{https://github.com/CompNet/SignedStabilityBenchmark}.}. In total, we produce $214$ and $184$ instances for complete and incomplete networks, respectively, which makes a total of $398$ instances. In each generated network, the associated optimal solution corresponds to the planted partition defined for the corresponding network without perturbation. We use this dataset only for Section~\ref{subsec:EvaluationMVMO}.

\textbf{Dataset3:} In addition to artificial graphs, we also considered $8$ sparse real-world networks from two sources: four networks from \textit{Correlated of War (CoW)}~\cite{Pevehouse2004} and four biological networks retrieved from Samin Aref's Figshare repository\footnote{DOI: \href{https://doi.org/10.6084/m9.figshare.5700832.v5}{10.6084/m9.figshare.5700832.v5}}, see \cite{Aref2018} for more details. For each real-world signed network $G$, we apply a preprocessing step by retrieving the largest connected component defined in $G^+$, for computational purposes.

\section{Results}
\label{sec:Results}
We now assess the performance of our enumeration method $EnumCC(r_{max})$ when generating the space of optimal solutions to the CC problem. We first investigate the efficiency of the pruning conditions used in Algorithms~\ref{algo:internalPruning} and~\ref{algo:externalPruning} based on the MVMO property (Section~\ref{subsec:EvaluationMVMO}).
Then, we proceed with the evaluation of $EnumCC(r_{max})$, which includes 
all the pruning strategies used in the recurrent neighborhood search. We compare our method with OneTreeCC(), the best enumeration method available in the literature (Section~\ref{subsec:EvaluationEnumCC}).

\subsection{Evaluation of the MVMO-based Pruning Strategies}
\label{subsec:EvaluationMVMO}
The MVMO property has an important role in EnumCC, due to its ability to prune unfeasible edit operations in several parts of $CoNS(r)$. To be able to consider relatively large graph orders $n$ without being limited by the long running time of an exact partitioning method, we evaluate its performance based on \textit{Dataset2}. The complete results\footref{foot:figshare} as well as our source code\footref{github:SignedStabilityBenchmark} are publicly available online.

We apply $CoNS(r)$ \textit{with} and \textit{without} the MVMO property onto all generated networks with $r\in\{3,4\}$. The version \textit{with} MVMO property refers to the method described in Section~\ref{sec:EnumarationCCspace}, whereas the version \textit{without} it is obtained by removing this property from Algorithms~\ref{algo:internalPruning} and~\ref{algo:externalPruning}.
Due to space considerations, Figure~\ref{fig:benchmark-MVMO-4Edit} illustrates only the results related to $4$-edit distance.
The results obtained for $d = 0.25$ and $d = 1.00$ are shown in separated subfigures. In each subfigure, the $x$-axis represents the graph order $n$, and execution times (in seconds) are shown on the log-scaled $y$-axis. The solid (resp. dashed) plot lines correspond to $CoNS(r)$ with (resp. without) the MVMO property. Each plot line corresponds to a specific value of $\ell_0$ and is represented with a specific color. Each shaded region depicts a range of execution times around the plot line of the same color, based on the corresponding initial signed network and its perturbed versions.

\begin{figure}
    \centering
    \begin{subfigure}[h]{1.00\linewidth}
    \centering
    \includegraphics[width=1\textwidth]{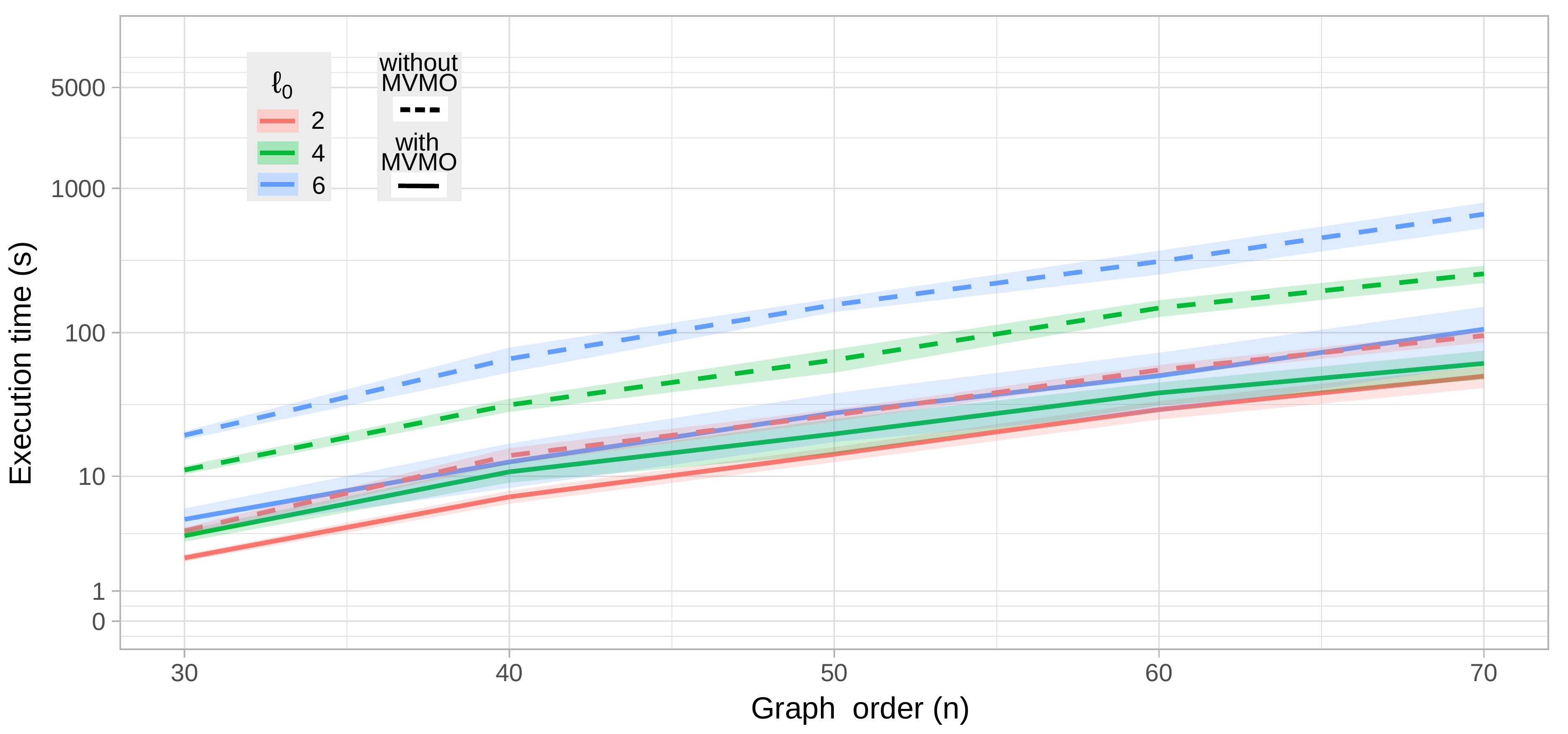}
    \caption{Instances with $d=0.25$.}
	\label{subfig:benchmark-sparse-4Edit}
    \end{subfigure}
    %
    %
    \begin{subfigure}[h]{1.00\linewidth}
    \centering
    \includegraphics[width=1\textwidth]{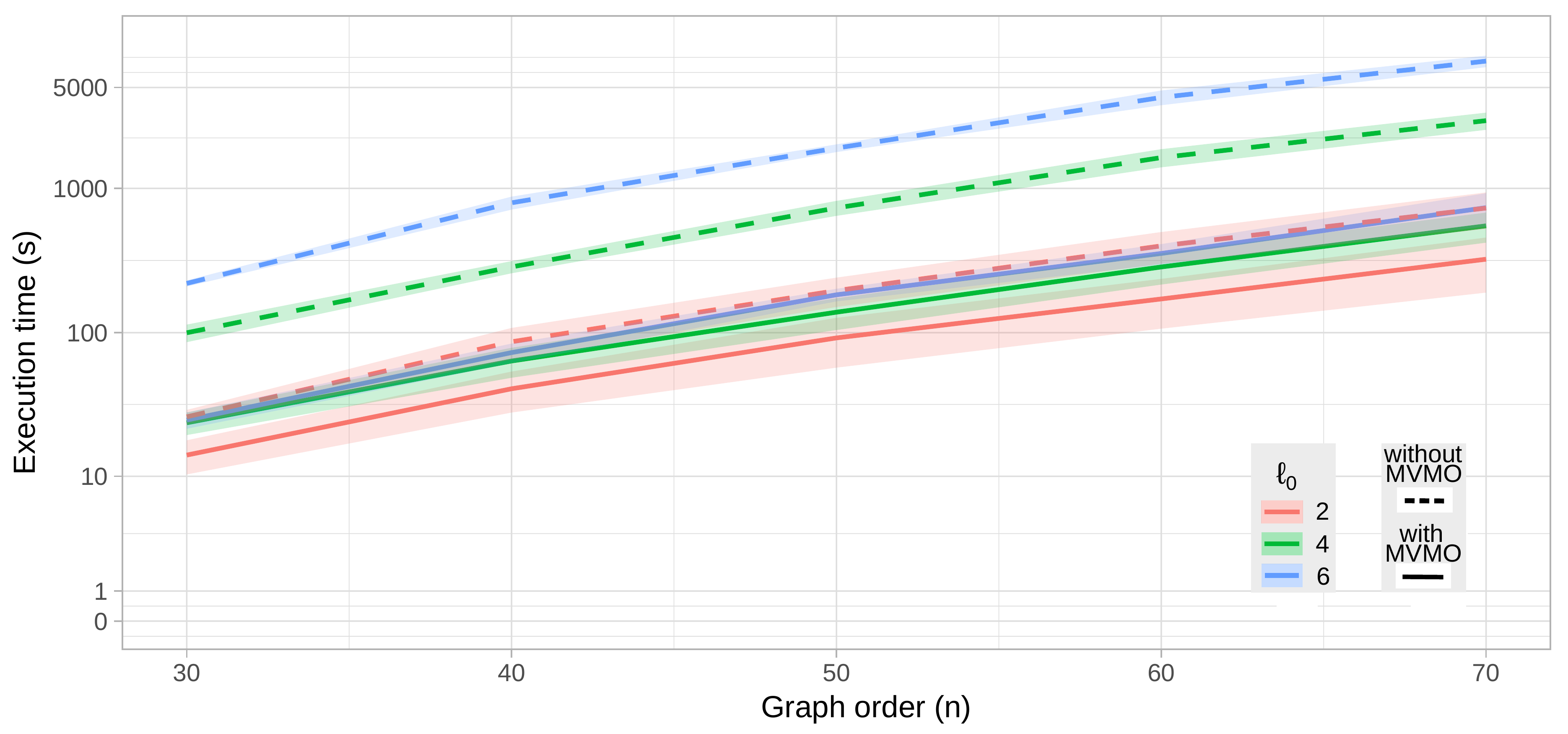}
    \caption{Instances with $d=1.00$. }
	\label{subfig:benchmark-dense-4Edit}
    \end{subfigure}
	\caption{Benchmark results for \textit{CoNS($r=4$)} with vs. without MVMO-based pruning for $d = 0.25$ (\ref{fig:benchmark-MVMO-4Edit}a) and $d = 1.00$ (\ref{fig:benchmark-MVMO-4Edit}b). The $x$-axis represents graph order $n$, and execution times (in seconds) are shown in the log-scaled $y$-axis.}
	\label{fig:benchmark-MVMO-4Edit}
\end{figure}

The first thing to notice is how graph density affects the execution times with increasing values of $n$. Indeed, the computational costs with and without the MVMO property are much larger with $d = 1.00$ than those with $d=0.25$. Second, substantially increasing $n$ affects the execution times without the MVMO property, whereas including the MVMO property allows to better handle this effect. Indeed, considering $d = 0.25$ and the $4$-edit distance, we observe that average execution times without the MVMO property are approximately $\ell_0$ times larger than with the MVMO property. Finally, including the MVMO property also allows to better handle increasing values of $\ell_0$. Indeed, when $n = 70$ and $d = 0.25$, the difference of average execution times between $\ell_0 = 2$ and $\ell_0 = 6$ without (resp. with) the MVMO property is $3.8$s (resp. $0.6$s) with the $3$-edit distance, and $595$s (resp. $63$s) with the $4$-edit distance. 

To conclude this part, the MVMO property makes a substantial improvement on $CoNS(r)$. This improvement is apparent even with small values of $n$. Note that, even a small improvement ($1s$ or $2s$) can make a clear difference in terms of execution time for a solution space with $100$ or more solutions, since $CoNS(r)$ is repeated for each solution.
Nevertheless, our results suggest that $CoNS(4)$ should not be used for computational purposes, even with the MVMO property. 
In the following, we investigate if the improvements brought by the MVMO property and other pruning strategies allow $EnumCC(r_{max})$ to compete with OneTreeCC().

\subsection{Evaluation of EnumCC}
\label{subsec:EvaluationEnumCC}
We now compare the results of $EnumCC(r_{max})$ against OneTreeCC() based on the synthetic signed networks from \textit{Dataset1} and the real-world signed networks from \textit{Dataset3}. As shown in Section~\ref{subsec:EvaluationMVMO}, since $CoNS(4)$ takes a substantial time, we rather apply EnumCC(3). We run both methods with a time limit of 12h and a limit of $50,000$ on the number of optimal solutions found. We first evaluate the results from \textit{Dataset1} for several values of density $d$ with a fixed value of $n$ (Section~\ref{subsubsec:EvaluationEnumCCbyDensity}) and several values of $n$ with a fixed value of $d$ (Section~\ref{subsubsec:EvaluationEnumCCbyGraphOrder}), then pass to the evaluation of the results from \textit{Dataset3} (Section~\ref{subsubsec:eval_realworld_instances}).

Regarding the synthetic graphs, we present a selection of the most relevant results in Figures~\ref{subfig:results-n36-sparse-neg03}--\ref{subfig:results-n50-dense}, for $d \in \{0.25, 1.00\}$. The complete results\footref{foot:figshare} as well as our source code\footnote{\url{https://github.com/CompNet/Sosocc}}\textsuperscript{,}\footnote{\url{https://github.com/CompNet/EnumCC}} are available online, though. 
We first describe these plots generically here, before interpreting them.
In these figures, there are 3 subfigures. Each subfigure corresponds to a specific value of $q_m$ (hence, a specific parameter set), and displays the difference of execution times between  EnumCC(3) and OneTreeCC() (i.e., EnumCC(3) minus OneTreeCC()), represented on the log-scaled $y$-axis of the plots. When such difference takes a negative value, this means that our proposed method runs faster than OneTreeCC(). The set of $20$ graphs generated for each parameter set are indexed and shown on the $x$-axis. Finally, to guide our discussion, we show for each graph the maximal number of solutions found by the method(s) within a time limit and the number of jumps related to EnumCC(3), i.e. $n_{jump}(EnumCC(3))$, where the latter is shown in parentheses.

\subsubsection{Evaluation of EnumCC by Density}
\label{subsubsec:EvaluationEnumCCbyDensity}
In this section, the results are for $n=36$ and $d \in \{0.25, 0.5, 1.0\}$. They are representative enough of the results obtained with other values of $n$.
We first consider the $d = 0.25$ results, shown in Figure~\ref{fig:Results-sparse-n36}. The first thing that we notice is how the performance is affected by $q_{neg}$, independently of $q_m$. Indeed, we first see for $q_{neg}=0.3$ that EnumCC(3) runs faster than OneTreeCC() in the overwhelming majority of instances. Then, for $q_{neg} = 0.5$, OneTreeCC() increases the number of instances, where it runs faster, but the dominance of EnumCC(3) is still preserved to a lesser extent. Finally, for $q_{neg}=0.7$, OneTreeCC() dominates EnumCC(3) on almost all instances. 

We observe that increasing $q_{neg}$ essentially results in the emergence of three features, which advantages OneTreeCC() over EnumCC(3). The first one is large values of $n_{jump}(EnumCC(3))$. Since a branch-and-bound tree needs to be built from scratch, each additional jump has an extra cost for EnumCC(3) in terms of execution time. We observe that the values of $n_{jump}(EnumCC(3))$ are relatively larger for mainly $q_{neg}=0.5$ and $q_m=\{0.4, 0.6\}$. This shows that increasing $q_m$ is likely to increase the dissimilarity between solutions, a fact that we have already pointed out in~\cite{Arinik2020b}, for complete signed networks. 

The second feature is a very large size of the solution space. The results show that OneTreeCC() does a much better job in this case. Indeed, when $q_{neg} = 0.7$, OneTreeCC() can solve instances associated with a very large number of solutions (e.g. $50,000$) within 5 minutes, whereas the same process often takes several hours for EnumCC(3). Nevertheless, such an extreme case happens only with some specific graph topology, as in $q_{neg} = 0.7$.
Indeed, in the random network generation, increasing $q_{neg}$ with a low graph density is more likely to produce instances having a large number of vertices with only few positive edges. Then, these vertices are often placed at the periphery of modules in the solutions, i.e. they can easily change their module from one solution to another. OneTreeCC() seems to handle well these vertices in the enumeration process, thanks to the mathematical modeling.

Finally, the third feature, complementary to the previous one, is that instances having vertices with only few positive edges are often easier to solve, so that an initial optimal solution is often found at root relaxation, before passing to branch-and-bound. This usually results in a branch-and-bound tree with fewer branches for enumerating alternative optimal solutions in OneTreeCC(). It seems that this substantially advantages OneTreeCC() over EnumCC(3), when there are many optimal solutions to enumerate.

For space considerations, we do not present the results for $d \in \{0.5, 1.0\}$. Note that for these values, we do not observe any extreme case with a very large number of solutions (as happens with $d = 0.25$ and $q_{neg} = 0.7$). Indeed, the maximum number of solutions for those instances is $2,455$. This is probably because it is not as easy to break the underlying partition structure when graph density is large. 
This fact seems to advantage EnumCC(3) over OneTreeCC(). Indeed, the dominance of EnumCC(3) persists for $q_{neg} \in \{0.3, 0.5\}$ and even better than those for $d = 0.25$ (see Table~\ref{tab:OverviewEvalEnumCC3edit}). This holds for $q_{neg} = 0.7$, too. OneTreeCC() outperforms EnumCC(3) only for $q_m = 0.1$, whereas this was true for all values of $q_m$ for $d = 0.25$. These results confirm our previous observation: OneTreeCC() outperforms EnumCC(3) on instances with specific graph topology, where low density and large proportion of negative edges produce a very large number of solutions. In the other cases, EnumCC(3) performs much better. In the following, we study whether increasing graph order $n$ affects these observations.



\begin{figure}
    \centering
    \begin{subfigure}[h]{0.85\linewidth}
    \centering
    \includegraphics[width=1\textwidth]{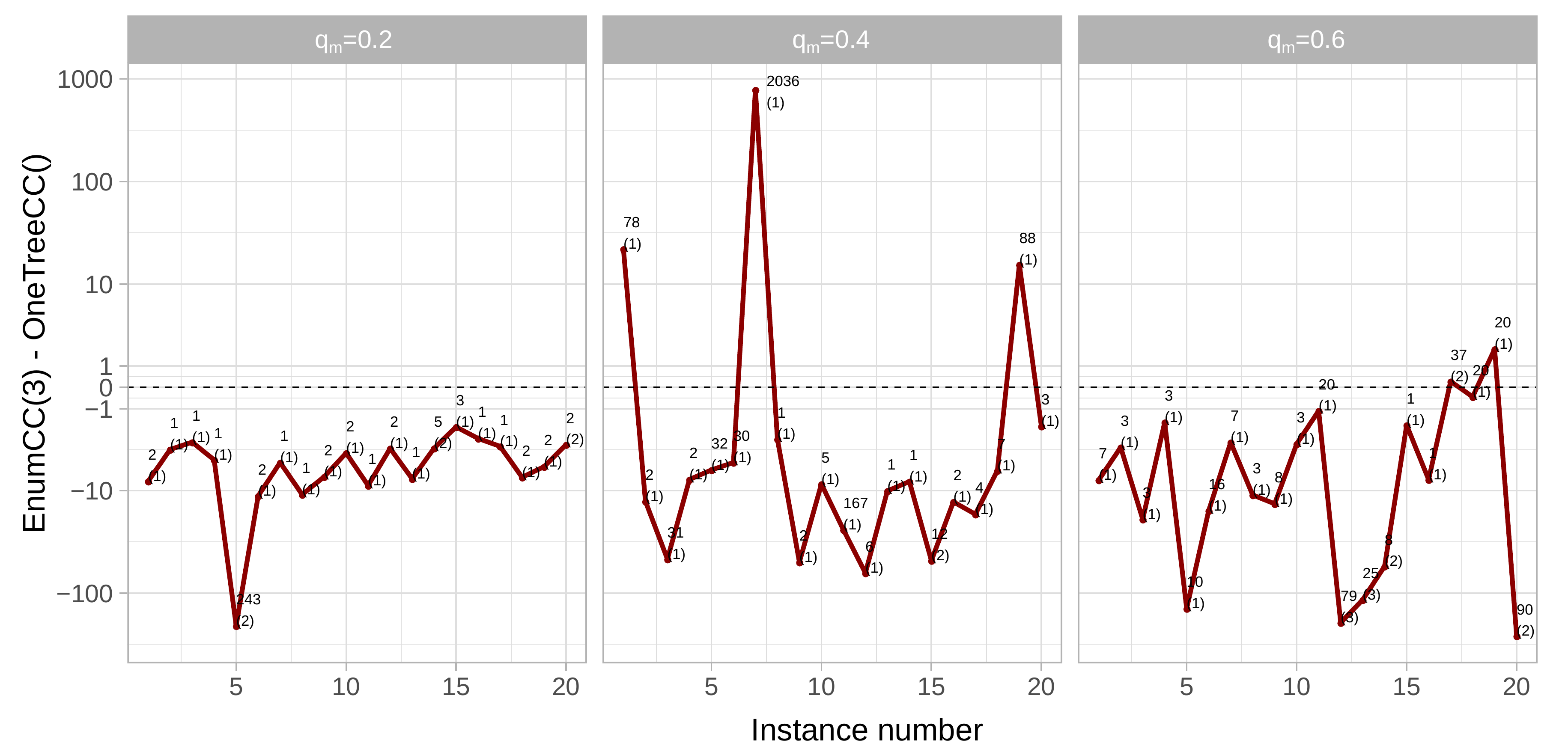}
    \caption{Instances with $d=0.25$ and $q_{neg}=0.3$.}
	\label{subfig:results-n36-sparse-neg03}
    \end{subfigure}
    %
    %
    \begin{subfigure}[h]{0.85\linewidth}
    \centering
    \includegraphics[width=1\textwidth]{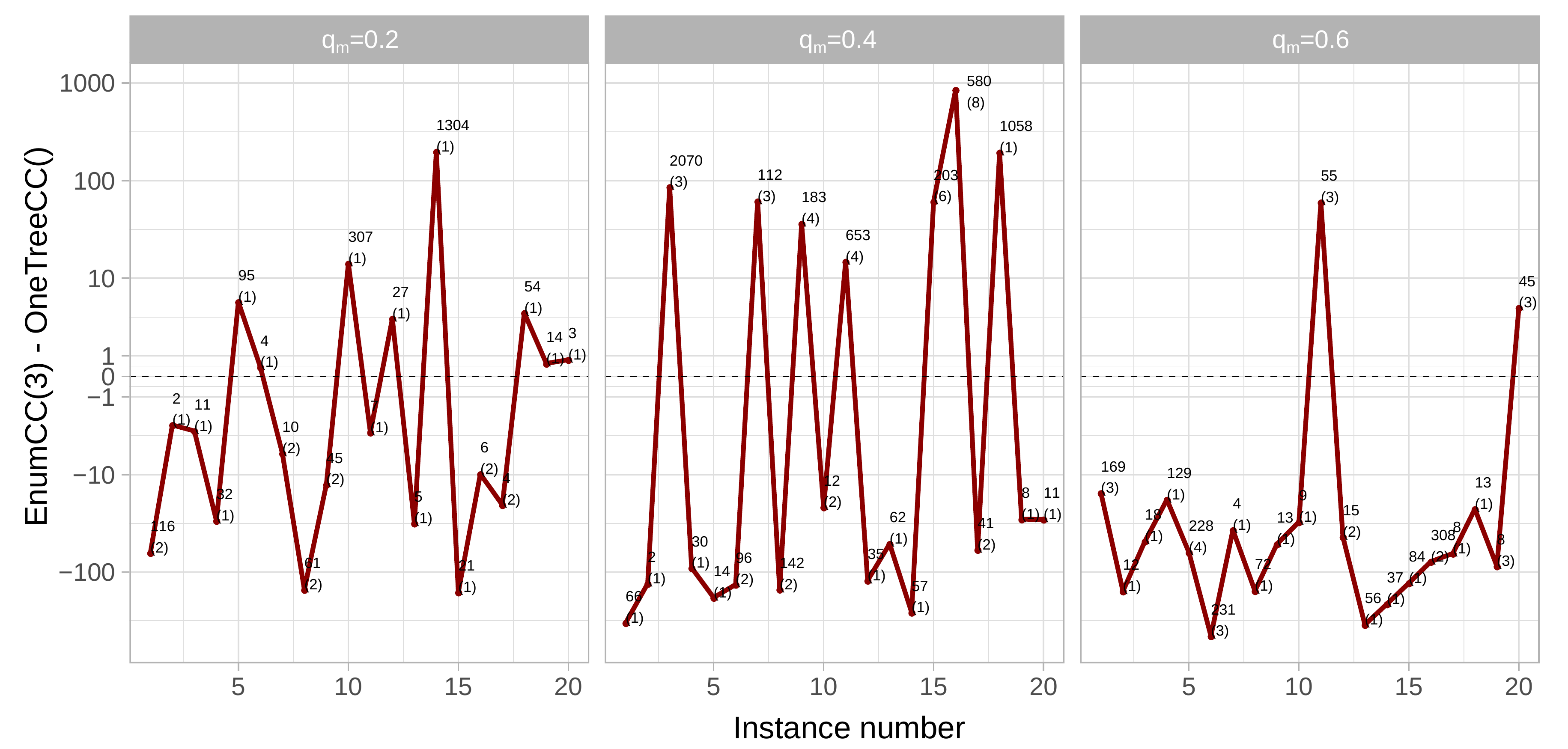}
    \caption{Instances with $d=0.25$ and $q_{neg}=0.5$.}
	\label{subfig:results-n36-sparse-neg05}
    \end{subfigure}
    %
    %
    \begin{subfigure}[h]{0.85\linewidth}
    \centering
    \includegraphics[width=1\textwidth]{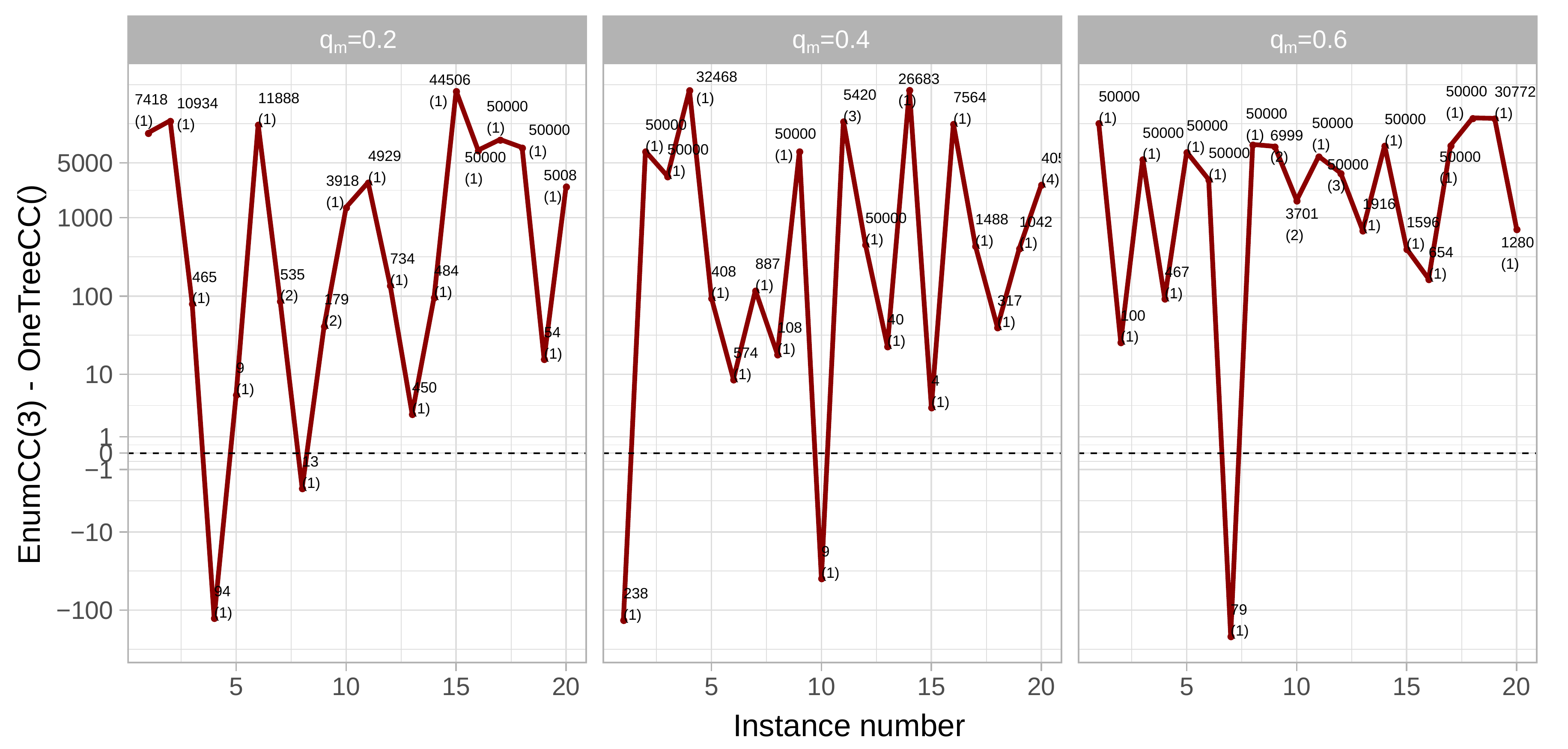}
    \caption{Instances with $d=0.25$ and $q_{neg}=0.7$.}
	\label{subfig:results-n36-sparse-neg07}
    \end{subfigure}
    \caption{Results for $n = 36$, $d = 0.25$ and $q_{neg} \in \{0.3,0.5,0.7\}$. The log-scaled $y$-axis
    indicates the difference of execution times between  EnumCC(3) and OneTreeCC() (i.e., EnumCC(3) minus OneTreeCC()), in seconds. We show for each graph the maximal number of solutions found by EnumCC(3) within a 12h time limit, as well as the corresponding number of jumps, i.e. $n_{jump}(EnumCC(3))$, between parentheses.
    }
    \label{fig:Results-sparse-n36}
\end{figure}

\subsubsection{Evaluation of EnumCC by Graph Order}
\label{subsubsec:EvaluationEnumCCbyGraphOrder}

In this section, we analyze the effect of increasing the graph order, for $n \in \{40, 45, 50\}$. To do so, we focus only on instances with $d = 1.0$ in order to reduce the total number of instances, hence the processing time of our analysis. Note that $q_{neg}$ approximately equals $0.7$ in these instances. The corresponding results are shown in Figure~\ref{fig:Results-dense-n40_50}. Overall, we see that the observations we made for $n = 36$ and $d \in \{0.5, 1.0\}$ in Section~\ref{subsubsec:EvaluationEnumCCbyDensity} are still valid when increasing $n$: EnumCC(3) performs much better.
Furthermore, unlike the results obtained with $d=0.25$, EnumCC(3) handles instances with a large value of $n_{jump}(EnumCC(3))$ much better (when we exclude some exceptional instances with more than $10$ jumps), and runs faster than OneTreeCC() in most of the instances. This point is even more valid when increasing $n$ further.

\begin{table*}[h]
	\caption{Number of optimal solutions found by the considered methods within the time limit of 12h for unsolved instances with $n=50$.
	}
    \small
	\centering
    \setlength{\tabcolsep}{0.45em} 
    \begin{tabular}{|r | r r r r r r | r r r r r r r | r r r r r r r r |}
    	\hline
    	\multirow{2}{*}{\diagbox{methods}{instance no}} &
        \multicolumn{6}{c|}{$q_m=0.4$} & 
        \multicolumn{7}{c|}{$q_m=0.5$} & 
        \multicolumn{8}{c|}{$q_m=0.6$} \\
            & 1 & 5 & 6 & 11 & 16 & 19 & 2 & 11 & 12 & 13 & 15 & 18 & 20 & 2 & 6 & 10 & 12 & 13 & 17 & 19 & 20 \\
        \hline
        \textbf{OneTreeCC()} & 4 & 4 & 1 & 1 & 13 & 2 & 7 & 21 & 3 & 1 & 10 & 2 & 2 & 3 & 1 & 3 & 4 & 5 & 3 & 1 & 3 \\
        \textbf{EnumCC(3)} & 10 & 13 & 6 & 5 & 255 & 16 & 217 & 44 & 8 & 1 & 115 & 46 & 13 & 45 & 10 & 32 & 6 & 60 & 7 & 4 & 65 \\
        \hline
	\end{tabular}
	\label{tab:n50-harder-instances}
\end{table*} 

Another interesting point is the hardness of instances, when increasing $n$. We say that an instance is hard to solve when the resolution process takes too long, which amounts to exceeding a time limit. It is very important to analyze such cases in terms of the number of optimal solutions found by both methods. Moreover, the existence of such instances also indicates the maximum graph order, for which the enumeration methods can produce full enumeration of all alternate optimal solutions. In our experiments, a total of $21$ such instances with $n = 50$ and $q_{m} > 0.3$ are not solved within the time limit of $12h$ by both methods. These instances can be identified as the ones with $y = 0$ in Figure~\ref{subfig:results-n50-dense}, and they are summarized in Table~\ref{tab:n50-harder-instances}. Each row corresponds to one of these methods, and each column represents an instance, identified by its id and its associated $q_m$ value. We see from Table~\ref{tab:n50-harder-instances} that EnumCC(3) handles these time-consuming instances better than OneTreeCC() does. Indeed, among the $21$ instances not solved by any method, EnumCC(3) finds more solutions than OneTreeCC() for $20$ instances. Moreover, EnumCC(3) could solve $7$ instances that OneTreeCC() could not, within the limit of $12h$. 


\begin{figure}
    \centering
    %
    \begin{subfigure}[h]{0.85\linewidth}
    \centering
    \includegraphics[width=1\textwidth]{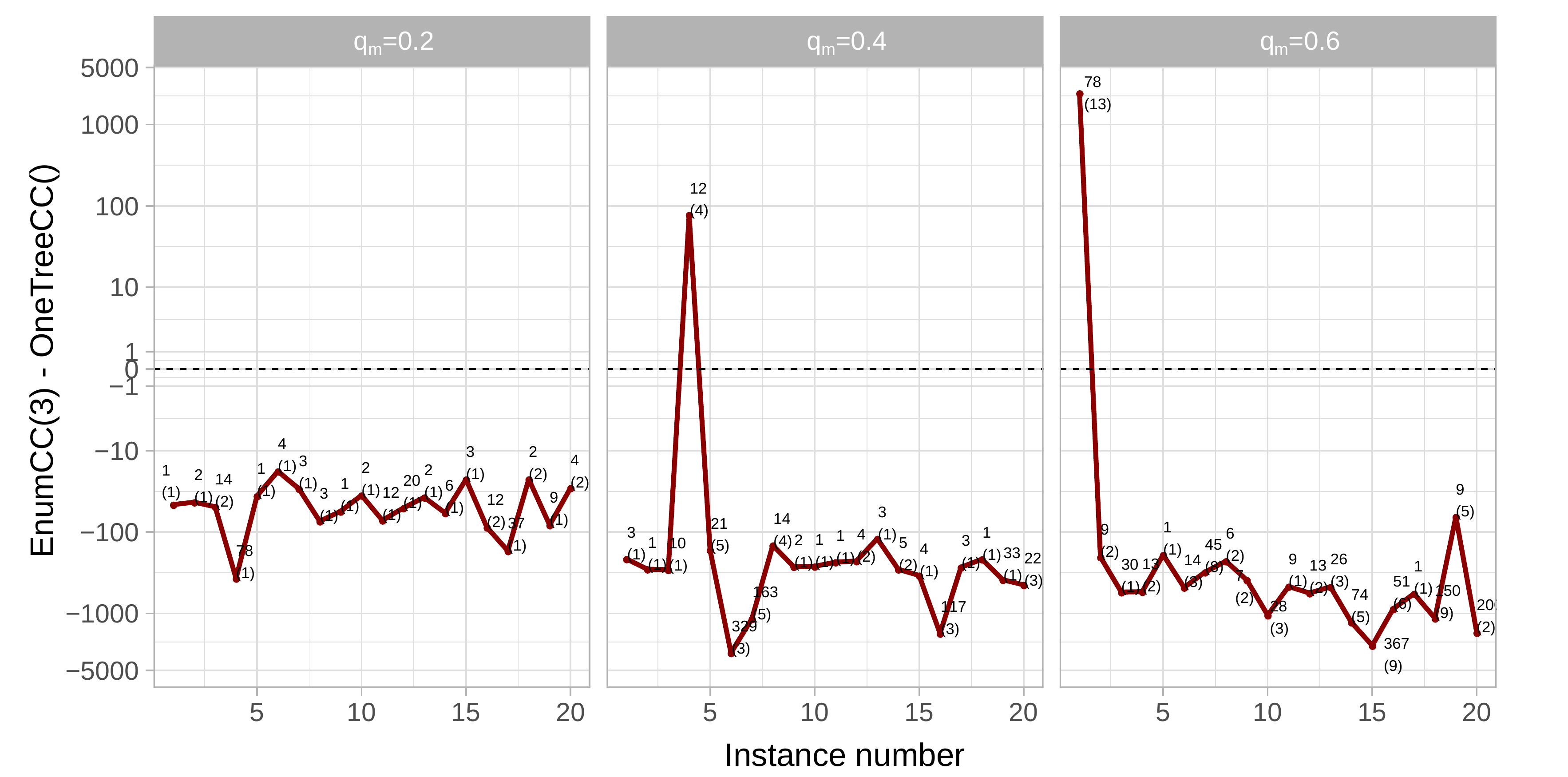}
    \caption{Instances with $n=40$ and $d=1.00$.}
	\label{subfig:results-n40-dense}
    \end{subfigure}
    %
    %
    \begin{subfigure}[h]{0.85\linewidth}
    \centering
    \includegraphics[width=1\textwidth]{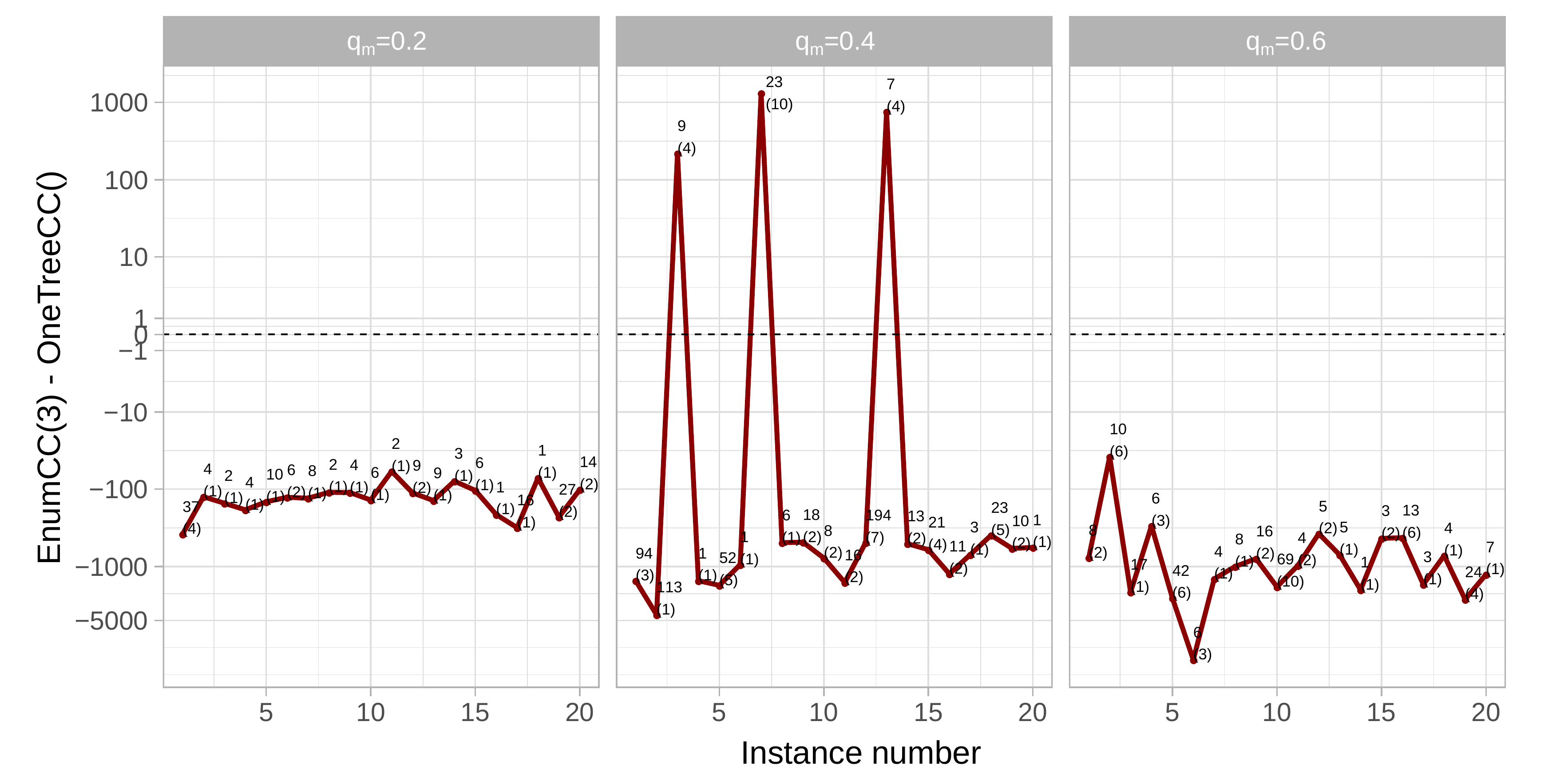}
    \caption{Instances with $n=45$ and $d=1.00$.}
	\label{subfig:results-n45-dense}
    \end{subfigure}
    %
    %
    \begin{subfigure}[h]{0.85\linewidth}
    \centering
    \includegraphics[width=1\textwidth]{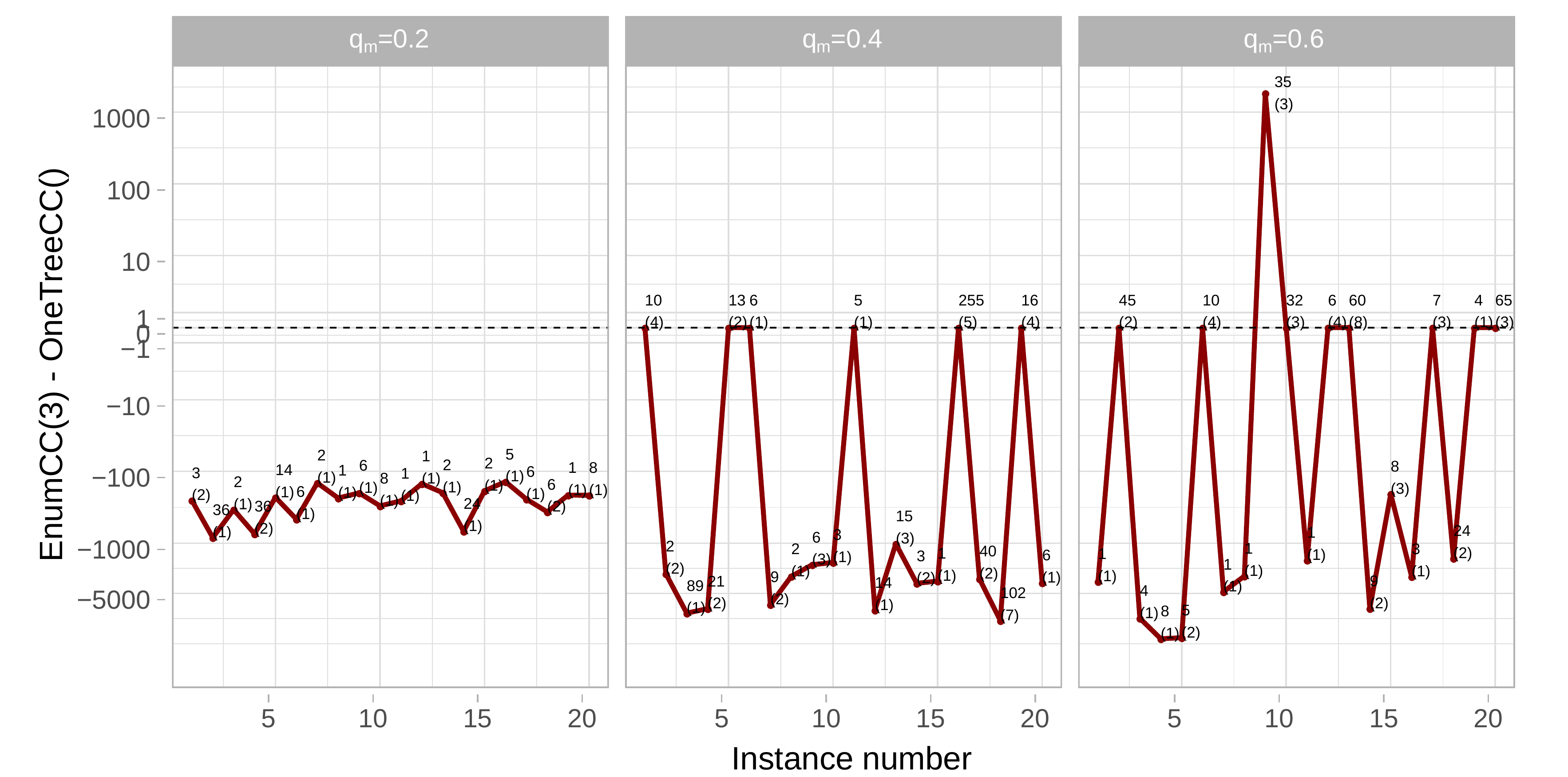}
    \caption{Instances with $n=50$ and $d=1.00$.}
	\label{subfig:results-n50-dense}
    \end{subfigure}
	\caption{Results for $n \in \{40, 45, 50\}$ and $d=1.00$. The log-scaled $y$-axis indicates the difference of execution times between  EnumCC(3) and OneTreeCC() (i.e., EnumCC(3) minus OneTreeCC()), in seconds. We show for each graph the maximal number of solutions found by EnumCC(3) within a time limit, as well as the corresponding number of jumps, i.e. $n_{jump}(EnumCC(3))$, between parentheses.
	}
	\label{fig:Results-dense-n40_50}
\end{figure}

To summarize the evaluation of EnumCC(3) with synthetic signed networks, there is not a single method which always gives the best results. On the one hand, OneTreeCC() handles very well the instances with a very large solution space. This extreme case is associated with a specific graph topology, based on low graph density and large $q_{neg}$. On the other hand, EnumCC(3) performs better in the rest of the instances, which constitutes the overwhelming majority. This point is illustrated by Table~\ref{tab:OverviewEvalEnumCC3edit}, which summarizes our results by showing the proportion of cases where EnumCC(3) runs faster.

\begin{table*}[ht]
	\caption{Summary regarding the proportion of the cases where EnumCC(3) is faster than OneTreeCC. We obtain these results by aggregating the instances by graph order $n$ (excluding the instances, when the difference of execution times between EnumCC(3) and OneTreeCC() is in the margin of 5 seconds). N/A indicates that there is no available entry due to the exclusion of the instances, when the time difference is in the margin of 5 seconds.}
    \label{tab:OverviewEvalEnumCC3edit}
	\hspace{-0.40cm}
	 \footnotesize
    \renewcommand{\arraystretch}{1.5}
    \begin{tabular}{| p{1.25cm}  p{1.25cm} || p{1.5cm} | p{1.5cm} | p{1.5cm} | p{1.5cm} | p{1.5cm} | p{1.5cm} |}
	    \hline
          &  & \textbf{$q_m=0.1$} &  \textbf{$q_m=0.2$} & \textbf{$q_m=0.3$} & \textbf{$q_m=0.4$} & \textbf{$q_m=0.5$} & \textbf{$q_m=0.6$} \\
        \hline\hline
    	\multirow{3}{*}{\parbox[c]{1.5cm}{$d=0.25$}}
    	    & \multirow{1}{*}{\parbox[c]{1.5cm}{$q_{neg}=0.3$}}
               & 1.00 & 0.92 & 1.00 & 0.87 & 0.96 & 0.92 \\
            & \multirow{1}{*}{\parbox[c]{1.5cm}{$q_{neg}=0.5$}}
               & 0.00 & 0.52 & 0.77 & 0.61 & 0.61 & 0.86\\
            & \multirow{1}{*}{\parbox[c]{1.5cm}{$q_{neg}=0.7$}}
               & 0.00 & 0.08 & 0.08 & 0.05 & 0.08 & 0.03\\
        \hline
    	\multirow{3}{*}{\parbox[c]{1.5cm}{$d=0.50$}}
    	    & \multirow{1}{*}{\parbox[c]{1.5cm}{$q_{neg}=0.3$}}
                & N/A & 1.00 & 1.00 & 1.00 & 1.00 & 1.00\\
            & \multirow{1}{*}{\parbox[c]{1.5cm}{$q_{neg}=0.5$}}
                & N/A & 1.00 & 1.00 & 0.92 & 0.84 & 0.75 \\
            & \multirow{1}{*}{\parbox[c]{1.5cm}{$q_{neg}=0.7$}}
                & 0.11 & 0.80 & 0.95 & 0.90 & 0.89 & 0.87 \\
        \hline
    	\multirow{1}{*}{\parbox[c]{1.5cm}{$d=1.00$}}
    	    & \multirow{1}{*}{\parbox[c]{1.5cm}{$q_{neg} \approx 0.7$}}
                & 1.00 & 0.98 & 0.98 & 0.94 & 0.95 & 0.98 \\
        \hline
	\end{tabular}\\
\end{table*}


\subsubsection{Evaluation Based on Real-World Instances}
\label{subsubsec:eval_realworld_instances}


Finally, we compare the results of EnumCC(3) against OneTreeCC() obtained on the real-world networks of \textit{Dataset3}. We run both methods with a time limit of 12h and a limit of $50,000$ on the number of optimal solutions found. The results are summarized in Table~\ref{tab:real-signed-net-experiments}. Each column corresponds to a real-world network, and they are sorted by increasing graph order $n$. We consider the first five networks as medium-sized and the last three as large-sized. The rows of Table~\ref{tab:real-signed-net-experiments} are organized in two parts. In the first part, we detail the characteristics of the considered networks, as well as the graph imbalance, for the sake of completeness. The second part corresponds to the evaluation results of OneTreeCC() and EnumCC(3). Therein, we indicate the execution time of the enumeration process, in seconds, and the number of optimal solutions found by these two methods within a time limit of 12h.

\begin{table}[h]
	\caption{Evaluation results for sparse real-world signed networks from Dataset3. Each column corresponds to a real-world network and they are sorted by graph order $n$. The rows are organized in two parts. The first six rows detail the characteristics of the considered networks, as well as the graph imbalance. The last four rows indicate the execution time of the enumeration process in seconds and the number of optimal solutions found by OneTreeCC() and EnumCC(3) within a time limit of 12h.}
	\renewcommand{\arraystretch}{1.2}
	\begin{tabular}{| r r | p{1cm} p{1cm} p{1cm} p{1cm} p{1cm} | p{1cm} p{1cm} p{1cm} |}
    	\hline
    	 & \multicolumn{1}{c|}{} & \multicolumn{5}{c|}{\textbf{Medium-sized networks}} & \multicolumn{3}{c|}{\textbf{Large-sized networks}}\\
         & & CoW 51-54 & CoW 54-57 & CoW 55-58 & CoW 61-64 & Senate 108\textsuperscript{th} & EGFR & Yeast & Macro-phage\\\cline{3-10}
        \multirow{6}{*}{\rotatebox[origin=c]{90}{ \parbox[c]{2.4cm}{\centering \textbf{Characteristics}}}}
            & graph order $n=|V|$ & 61 & 67 & 72 & 75 & 99 & 313 & 575 & 660\\
            & graph size $m=|E|$ & 413 & 461 & 508 & 544 & 2,413 & 755 & 910 & 1,397\\
            & density $d$ & 0.23 & 0.21 & 0.20 & 0.19 & 0.50 & 0.015 & 0.005 & 0.006\\
            & prop. of neg. edges $q_{neg}$ & 0.08 & 0.09 & 0.08 & 0.08 & 0.41 & 0.34 & 0.09 & 0.33\\
            & \# frustrated edges $I(P)$ & 15 & 27 & 29 & 34 & 1157 & 181 & 35 & 309\\
            & prop. of frustrated edges $I(P)/|E|$ & 0.04 & 0.06 & 0.06 & 0.06 & 0.48 & 0.24 & 0.04 & 0.22\\
        \hline
        \multirow{4}{*}{\rotatebox[origin=c]{90}{ \parbox[c]{1.2cm}{\centering \textbf{Results}}}}
        & \# opt. solutions for OneTreeCC() & 46 & 34 & 41 & 201 & 22 & 112 & 50,000 & 251\\
        & \# opt. solutions for EnumCC(3) & 46 & 34 & 41 & 201 & 22 & 50,000 & 50,000 & 50,000\\
        & exec. time for OneTreeCC() & 87.70s & \textbf{14.67s} & \textbf{15.62s} & \textbf{17.75s} & \textbf{858s} & 43,200s & 24,600s & 43,200s\\
        & exec. time for EnumCC(3) & \textbf{66.05s} & 67.38s & 134.87s & 755.81s & 2,424s & \textbf{3,751s} & \textbf{1,871s} & \textbf{2,280s}\\
        \hline
	\end{tabular}
	\label{tab:real-signed-net-experiments}
\end{table}

We can summarize these results in three points. First, we notice that although the medium-sized networks are larger than the synthetic graphs from \textit{Dataset2}, in practice both OneTreeCC() and EnumCC(3) can handle real-world medium-sized networks in a very reasonable time. Second, we observe for medium-sized networks that EnumCC(3) is faster than OneTreeCC() for the first one, whereas OneTreeCC() dominates EnumCC(3) for the last four ones. EnumCC(3) takes longer because, in average, it spends $86\%$ of its execution time to prove that there is no alternate optimal solution in the end. Finally, as expected, both methods could not solve all three large-sized networks within the time limit of 12h. The particularity of these networks is that they are very sparse ($d \approx 1\%$) and their respective solution spaces contain at least $50,000$ optimal solutions. For all these networks, EnumCC(3) always finds more optimal solutions than OneTreeCC() does within the time limit, and it quickly explores a set of $50,000$ optimal solutions thanks to its RNS component.
Since OneTreeCC() struggles to find more alternate solutions within the time limit in two networks (i.e. EGFR and Macrophage), RNS appears to be useful in these cases.

To conclude this part, all these results also support our conclusion from Section~\ref{subsubsec:EvaluationEnumCCbyGraphOrder}, in that there is not a single method which always gives the best results and both methods are complementary. Moreover, these results allow us to make our conclusions from Section~\ref{subsubsec:EvaluationEnumCCbyGraphOrder} more precise, in two ways. First, it is more appropriate to use EnumCC(3), rather than OneTreeCC(), to explore a large number of optimal solutions within a time limit, for large signed networks. Nevertheless, it is more suitable to favor OneTreeCC() over EnumCC(3) for sparse signed networks with $65 < n < 100$.

\section{Conclusion}
\label{sec:Conclusion}
For most clustering problems, due to their NP-hard nature, exact approaches do not scale well even when looking for a \textit{single} optimal solution. In this work, we proposed an efficient enumeration method to identify all optimal solutions of the CC problem for a given signed graph. It combines an exhaustive enumeration strategy with neighborhoods of different sizes, designed for our problem.
In our experiments based on synthetic and real-world signed networks, we first confirmed the findings of our previous work~\cite{Arinik2020b} about the existence of multiple optimal solutions for incomplete signed networks. We showed that such networks can have a very large number of optimal solutions for the CC problem, e.g., $50{,}000$ and more for graphs containing only tens of vertices. Further investigation indicates that this extreme case of a very large number of optimal solutions is associated with two specific graph topologies, corresponding to 1) low graph density and large proportion of negative edges for small- and medium-sized networks (i.e. $n \leq 100$), and 2) very low graph density for large-sized networks (i.e. $n > 100$). Otherwise, multiple solutions can still exist, mostly for the graphs with considerably more imbalance.
Finally, we also showed that our method EnumCC(3) performs better than CPLEX's OneTreeCC() in the overwhelming majority of the considered networks. Nevertheless, there is not a single method which always gives the best results. 
Indeed, OneTreeCC() handles very well sparse medium-sized signed networks in general. We conclude that it is more appropriate to use OneTreeCC() in this case, whereas EnumCC(3) is more suitable for all the remaining cases.

We believe that this work opens new directions for future research. First, the most straightforward perspective is to consider weighted signed graphs. This would require dealing with the generation of the edge weights in our random graph model, and to extend the pruning strategies proposed in this work to weighted signed graphs. Second, as we see in our experiments, the process of complete enumeration can be very costly, particularly because of the process of finding an undiscovered solution. To accelerate this process, it could be very beneficial to use a heuristic. Existing heuristics from the literature are designed to find a single high-quality solution though, so a better approach would be to redesign them by considering some tabu search-based operations allowing to escape from the discovered optimal solutions. Third, one could study how robust the solution spaces are, by slightly introducing some perturbations into the considered networks. This would also allow identifying critical vertices when the underlying space of optimal solutions is changed, akin to the concept of vitality~\cite{Koschutzki2005}. This could be done through either repeating the process from scratch for each perturbed signed graph, or based on the definition of stability range~\cite{Nowozin2009}. Fourth, exploring the space of optimal solutions for other clustering problems with unsigned networks would be another interesting research line. Indeed, the work of Good \textit{et al.}~\cite{Good2010} showed the need of identifying multiple quasi-optimal solutions for the Modularity Maximization problem, which consists in detecting a community structure in an unsigned graph. From this perspective, an efficient enumeration method similar to ours could be implemented to see whether their empirical findings apply to optimal solutions too.

\paragraph{Acknowledgments.}This research benefited from the support of Agorantic research federation (FR 3621), as well as the FMJH (Jacques Hadamard Mathematics Foundation) through PGMO (Gaspard Monge Program for Optimisation and operational research), and from the support to this program from EDF, Thales, Orange and Criteo.

\appendix

\section{2-partition and 2-chorded cycle inequalities}
\label{secapx:2Partition2ChordedCycleIneqs}
For the sake of completeness, we detail below the 2-partition and 2-chorded cycle inequalities:

\begin{itemize}
    \item Let $S, T \subseteq V$ be two nonempty disjoint subsets of $V$. Then, the $2$-partition inequality, illustrated in Figure~\ref{subfig:2partitionIneqs}, is defined as
        \begin{equation}
        \sum\limits_{u \in S} \sum\limits_{v \in T} x_{uv} - \sum_{\substack{(u,v) \in S\\u \neq v}} x_{uv} - \sum_{\substack{(u,v) \in T\\u \neq v}} x_{uv} \leq min\{|S|,|T|\}.
        \label{eq:2partitionIneqs}
        \end{equation}
    \item Let $C$ $\subseteq$ $E$ be a cycle of length at least 5, and $\overline{C} = \{v_i v_{i+2} | i=1,..,|C|-2\} \cup \{v_1 v_{|C|-1}, v_2 v_{|C|}\}$ be a $2$-chorded cycle of $C$. Then, the $2$-chorded cycle inequality, illustrated in Figure~\ref{subfig:2chordedCycleIneqs}, is defined as
        \begin{equation}
        \sum\limits_{(u,v) \in C} x_{uv} - \sum\limits_{(u,v) \in \overline{C}} x_{uv} \leq \floor{\frac{|C|}{2}}.
        \label{eq:2chordedCycleIneqs}
        \end{equation}
\end{itemize}

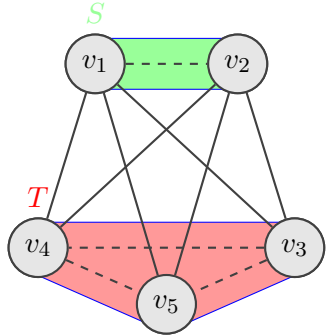
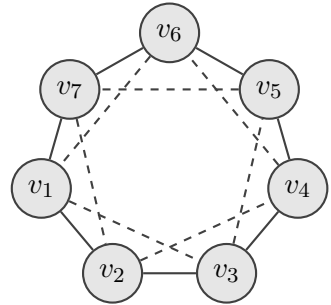
\begin{figure*}[ht!]
    \centering
	\begin{subfigure}[b]{0.45\linewidth}
    \centering
    \begin{tikzpicture}[scale=0.75]
		\tikzstyle{nn}=[circle,thick,draw=black!75,fill=black!10,minimum size=7mm]
		\tikzstyle{sl}=[thick,draw=black!75,text=black!75]
		\tikzstyle{dl}=[thick,dashed,draw=black!75,text=black!75]

		\node[nn] (v1) at (-4.5,10) {$v_1$};
		\node[nn] (v2) at (-2,10) {$v_2$};
		
		\node[nn] (v3) at (-1,6.75) {$v_3$};
		\node[nn] (v4) at (-5.5,6.75) {$v_4$};
		\node[nn] (v5) at (-3.25,5.75) {$v_5$};
		
		\draw[blue,fill=green!40] \convexpath{v1,v2}{0.45cm};
		\draw[blue,fill=red!40] \convexpath{v4,v3,v5}{0.45cm};
		
		\node[nn,label=\color{green!40}{$S$}] (v1) at (-4.5,10) {$v_1$};
		\node[nn] (v2) at (-2,10) {$v_2$};
		
		\node[nn] (v3) at (-1,6.75) {$v_3$};
		\node[nn,label=\color{red}{$T$}] (v4) at (-5.5,6.75) {$v_4$};
		\node[nn] (v5) at (-3.25,5.75) {$v_5$};

		\draw[sl] (v1)  -- (v3)  node [near end, above] {};
		\draw[sl] (v1)  -- (v4)  node [near end, above] {};
		\draw[sl] (v1)  -- (v5)  node [near end, above] {};
		\draw[sl] (v2)  -- (v3)  node [near end, above] {};
		\draw[sl] (v2)  -- (v4)  node [near end, above] {};
		\draw[sl] (v2)  -- (v5)  node [near end, above] {};

	    \draw[dl] (v1)  -- (v2)  node [near end, above] {};
	    \draw[dl] (v3)  -- (v4)  node [near end, above] {};
	    \draw[dl] (v3)  -- (v5)  node [near end, above] {};
	    \draw[dl] (v4)  -- (v5)  node [near end, above] {};
	\end{tikzpicture}
	\caption{2-partition inequality.}
	\label{subfig:2partitionIneqs}
	\end{subfigure}
	\begin{subfigure}[b]{0.45\linewidth}
	\centering
    \begin{tikzpicture}[scale=0.75]
		\tikzstyle{nn}=[circle,thick,draw=black!75,fill=black!10,minimum size=7mm]
		\tikzstyle{sl}=[thick,draw=black!75,text=black!75]
		\tikzstyle{dl}=[thick,dashed,draw=black!75,text=black!75]
		
		\node[nn] (v1) at (-2,8) {$v_1$};
		\node[nn] (v2) at (-0.75,6.5) {$v_2$};
		\node[nn] (v3) at (1.25,6.5) {$v_3$};
		\node[nn] (v4) at (2.5,8) {$v_4$};
		\node[nn] (v5) at (2,9.75) {$v_5$};
		\node[nn] (v6) at (0.25,10.75) {$v_6$};
		\node[nn] (v7) at (-1.5,9.75) {$v_7$};

		\draw[sl] (v1)  -- (v2)  node [near end, above] {};
		\draw[sl] (v2)  -- (v3)  node [near end, above] {};
		\draw[sl] (v3)  -- (v4)  node [near end, above] {};
		\draw[sl] (v4)  -- (v5)  node [near end, above] {};
		\draw[sl] (v5)  -- (v6)  node [near end, above] {};
		\draw[sl] (v6)  -- (v7)  node [near end, above] {};
		\draw[sl] (v7)  -- (v1)  node [near end, above] {};
	
	    \draw[dl] (v1)  -- (v3)  node [near end, above] {};
	    \draw[dl] (v2)  -- (v4)  node [near end, above] {};
	    \draw[dl] (v3)  -- (v5)  node [near end, above] {};
	    \draw[dl] (v4)  -- (v6)  node [near end, above] {};
	    \draw[dl] (v5)  -- (v7)  node [near end, above] {};
	    \draw[dl] (v6)  -- (v1)  node [near end, above] {};
	    \draw[dl] (v7)  -- (v2)  node [near end, above] {};
	\end{tikzpicture}
	\caption{2-chorded cycle inequality.}
	\label{subfig:2chordedCycleIneqs}
	\end{subfigure}
	\caption{Illustrations of the 2-partition (left) and 2-chorded cycle (right) inequalities.}
	\label{fig:IllustrativeExampleFacetDefiningInequalities}
\end{figure*}

\section{Edit Distance Between Two Membership Vectors}
\label{secapx:CalculatingEditDistance}

Before calculating the edit distance between two membership vectors, we need to select one of them as the \textit{reference vector}, in order to adapt the module assignments of the other vector based accordingly. Hence, the edit distance is calculated between the reference vector and this newly changed one, that we call \textit{relative vector}. 

The task of adapting the relative vector w.r.t the reference one can be expressed as an assignment problem, also known as maximum weighted bipartite matching problem, as already done in the literature~\cite{Liu2019}. 
Let $\pi^s$ and $\pi^t$ be two membership vectors of length $n$, associated with the partitions $P^s$ and $P^t$, which contain $\ell^s$ and $\ell^t$ modules, respectively. Also, since the edit distance is symmetric, without loss of generality, let $\ell^s \leq \ell^t$.
Moreover, let $CM$ be the $\ell^s\ \times\ \ell^t$ confusion matrix of $\pi^s$ and $\pi^t$. The term $CM_{ij}$, with $1\leq i \leq \ell^s$ and $1\leq j \leq \ell^t$, represents the number of vertices in the intersection of modules $M^s_{i}$ and $M^t_{j}$, i.e. $|M^s_{i} \cap M^t_{j}|$.
Then, we look for a bijection $f : \{ 1, 2, ..., \ell^s \} \rightarrow \{ 1, 2, ..., \ell^t \}$ that maximizes the number of vertices common to pairs of modules from both membership vectors, i.e.

\begin{equation}
	\max \sum_{i=1}^{\ell^s} CM_{i,f(i)}.
	\label{eq-appendix-chapter5:AssignmentProblem}
\end{equation}

Since this problem can be modelled as an assignment or a maximum weighted bipartite matching problem, it can be solved in various ways. One of them is through the well-known Hungarian algorithm, whose complexity is $O(n^3)$~\cite{Kuhn1955}. Nevertheless, the best polynomial time algorithm is currently based on the network simplex method, and it runs in $O(|V||E| + |V|^2 log(|V|))$ time using a Fibonacci heap data structure~\cite{Fredman1987}. 
One final remark is about the case of $\ell^s < \ell^t$, in which there are $|\ell^t - \ell^s|$ unassigned module labels in $\pi^t$. In this case, one can arbitrarily renumber these labels, starting from $\ell^{s}+1$. 

Finally, the edit distance between two membership vectors is calculated by simply counting the number of cases where the module labels of the vertices in the reference and relative vectors are different.

\section{Proofs}
\label{secapx:proofs}

\subsection{All Proofs Related to the MVMO Property for $3$-edit Operations on Complete Unweighted Signed Graphs}
\label{subsecapx:MVMO-proofs-3edit}
We complete the proof of Lemma~\ref{lemma:MVMO-3Vertices} by verifying below  the conditions of all atomic $3$-edit operations for unweighted complete signed networks (illustrated in Figure~\ref{fig-appendix:3edit-operations}), where $\myOverArrow{V}{s}{t}{} = \{u, v, z\}$ and $(u,v), (u,z), (v,z) \in \widetilde{E}$. Recall that $\widetilde{E} = \{(u,v) \ |\ (u,v) \in E\ \text{and}\ u,v \in \myOverArrow{V}{s}{t}{}\ \text{and}\ (\pi^s(u) = \pi^s(v) \lor \pi^t(u) = \pi^s(v) \lor \pi^s(u) = \pi^t(v) \lor \pi^t(u) = \pi^t(v))\}$.
 
\begin{figure*}
    \centering
    \includegraphics[width=1\textwidth]{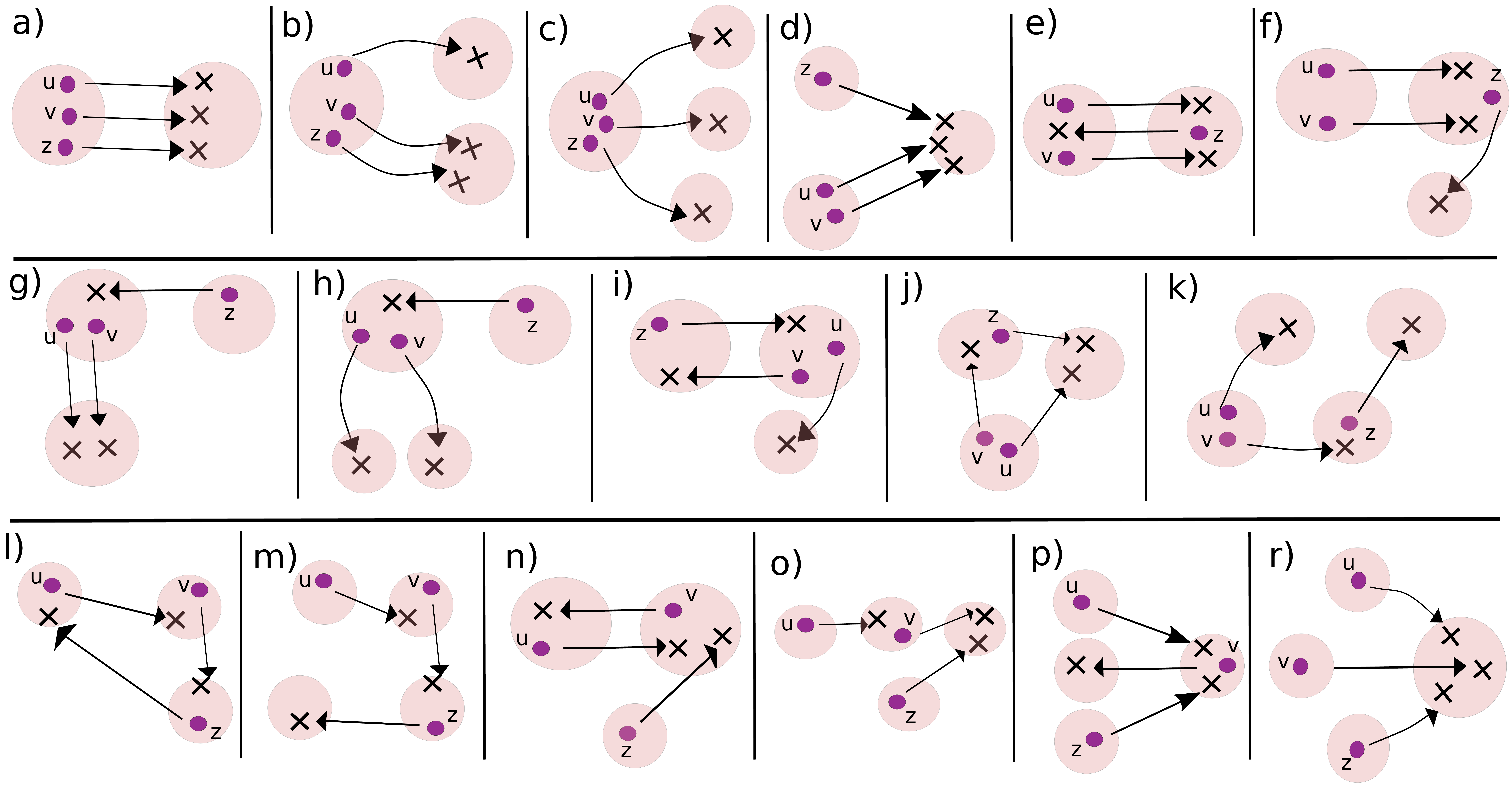}
    \caption[Fig]{All atomic 3-edit operations.}
    \label{fig-appendix:3edit-operations}
\end{figure*}

\begin{itemize}
    \item[\textit{a)}] We have $(\gamma^{left}_{u} = a_{uv} + a_{uz}) > (\gamma^{right}_{u} = -a_{uv} -a_{uz})$, $(\gamma^{left}_{v} = a_{uv} + a_{vz}) > (\gamma^{right}_{v} = -a_{uv} -a_{vz})$ and $(\gamma^{left}_{z} = a_{uz} + a_{vz}) > (\gamma^{right}_{z} = -a_{uz} -a_{vz})$. We see that $a_{uv}$, $a_{uz}$ and $a_{vz}$ cannot be negative.
    \item[\textit{b)}] We have $(\gamma^{left}_{u} = a_{uv} + a_{uz}) > (\gamma^{right}_{u} = 0)$, $(\gamma^{left}_{v} = a_{uv} + a_{vz}) > (\gamma^{right}_{v} = -a_{vz})$ and $(\gamma^{left}_{z} = a_{uz} + a_{vz}) > (\gamma^{right}_{z} = -a_{vz})$. We see that $a_{uv}$, $a_{uz}$ and $a_{vz}$ cannot be negative.
    \item[\textit{c)}] We have $(\gamma^{left}_{u} = a_{uv} + a_{uz}) > (\gamma^{right}_{u} = 0)$, $(\gamma^{left}_{v} = a_{uv} + a_{vz}) > (\gamma^{right}_{v} = 0)$ and $(\gamma^{left}_{z} = a_{uz} + a_{vz}) > (\gamma^{right}_{z} = 0)$. We see that $a_{uv}$, $a_{uz}$ and $a_{vz}$ cannot be negative.
    \item[\textit{d)}] We have $(\gamma^{left}_{u} = a_{uv}) > (\gamma^{right}_{u} = -a_{uv} -a_{uz})$, $(\gamma^{left}_{v} = a_{uv}) > (\gamma^{right}_{v} = -a_{uv} -a_{vz})$ and $(\gamma^{left}_{z} = 0) > (\gamma^{right}_{z} = -a_{uz} -a_{vz})$. We see that $a_{uv}$, $a_{uz}$ and $a_{vz}$ must be positive.
    \item[\textit{e)}] We have $(\gamma^{left}_{u} = a_{uv} - a_{uz}) > (\gamma^{right}_{u} = -a_{uv} + a_{uz})$, $(\gamma^{left}_{v} = a_{uv} - a_{vz}) > (\gamma^{right}_{v} = -a_{uv} + a_{vz})$ and $(\gamma^{left}_{z} = -a_{uz} - a_{vz}) > (\gamma^{right}_{z} = a_{uz} + a_{vz})$. We see that $a_{uv}$ (resp. $a_{uz}$ and $a_{vz}$) cannot be negative (resp. positive).
    \item[\textit{f)}] We have $(\gamma^{left}_{u} = a_{uv} - a_{uz}) > (\gamma^{right}_{u} = -a_{uv})$, $(\gamma^{left}_{v} = a_{uv} - a_{vz}) > (\gamma^{right}_{v} = -a_{uv})$ and $(\gamma^{left}_{z} = 0) > (\gamma^{right}_{z} = a_{uz} + a_{vz})$. We see that $a_{uv}$, $a_{uz}$ and $a_{vz}$ cannot be negative.
    \item[\textit{g)}] We have $(\gamma^{left}_{u} = a_{uv}) > (\gamma^{right}_{u} = a_{uz} -a_{uv})$, $(\gamma^{left}_{v} = a_{uv}) > (\gamma^{right}_{v} = a_{vz} - a_{uv})$ and $(\gamma^{left}_{z} = -a_{uz} - a_{vz}) > (\gamma^{right}_{z} = 0)$. We see that $a_{uv}$, $a_{uz}$ and $a_{vz}$ cannot be negative.
    \item[\textit{h)}] We have $(\gamma^{left}_{u} = a_{uv}) > (\gamma^{right}_{u} = -a_{uv})$, $(\gamma^{left}_{v} = a_{uv}) > (\gamma^{right}_{v} = - a_{uv})$ and $(\gamma^{left}_{z} = -a_{uz} - a_{vz}) > (\gamma^{right}_{z} = 0)$. We see that $a_{uv}$, $a_{uz}$ and $a_{vz}$ cannot be negative.
    \item[\textit{i)}] We have $(\gamma^{left}_{u} = a_{uv}) > (\gamma^{right}_{u} = a_{uz})$, $(\gamma^{left}_{v} = a_{uv} - a_{vz}) > (\gamma^{right}_{v} = a_{vz})$ and $(\gamma^{left}_{z} = -a_{uz} - a_{vz}) > (\gamma^{right}_{z} = + a_{vz})$. We see that $a_{uv}$ (resp. $a_{uz}$ and $a_{vz}$) cannot be negative (resp. positive).
    \item[\textit{j)}] We have $(\gamma^{left}_{u} = a_{uv}) > (\gamma^{right}_{u} =  - a_{uz})$, $(\gamma^{left}_{v} = a_{uv}) > (\gamma^{right}_{v} = -a_{vz})$ and $(\gamma^{left}_{z} = 0) > (\gamma^{right}_{z} = -a_{uz} + a_{vz})$. We see that $a_{uv}$ and $a_{vz}$ (resp. $a_{uz}$) must be positive (resp. negative).
    \item[\textit{k)}] We have $(\gamma^{left}_{u} = a_{uv}) > (\gamma^{right}_{u} = 0)$, $(\gamma^{left}_{v} = a_{uv} - a_{vz}) > (\gamma^{right}_{v} = 0)$ and $(\gamma^{left}_{z} = 0) > (\gamma^{right}_{z} = a_{vz})$. We see that $a_{uv}$ and $a_{vz}$ (resp. $a_{uz}$) must be positive (resp. negative).
    \item[\textit{l)}] We have $(\gamma^{left}_{u} = -a_{uv}) > (\gamma^{right}_{u} = a_{uz})$, $(\gamma^{left}_{v} = -a_{vz}) > (\gamma^{right}_{v} = a_{uv})$ and $(\gamma^{left}_{z} = -a_{uz}) > (\gamma^{right}_{z} = a_{vz})$. We see that $a_{uv}$ and $a_{vz}$ (resp. $a_{uz}$) must be positive (resp. negative).
    \item[\textit{m)}] We have $(\gamma^{left}_{u} = -a_{uv}) > (\gamma^{right}_{u} = 0)$, $(\gamma^{left}_{v} = -a_{vz}) > (\gamma^{right}_{v} = a_{uv})$ and $(\gamma^{left}_{z} = 0) > (\gamma^{right}_{z} = a_{vz})$. We see that $a_{uv}$ and $a_{vz}$ (resp. $a_{uz}$) must be positive (resp. negative).
    \item[\textit{n)}] We have $(\gamma^{left}_{u} = -a_{uv}) > (\gamma^{right}_{u} = a_{uv} - a_{uz})$, $(\gamma^{left}_{v} = -a_{uv}) > (\gamma^{right}_{v} = a_{uv} + a_{vz})$ and $(\gamma^{left}_{z} = -a_{vz}) > (\gamma^{right}_{z} = -a_{uz})$. We see that $a_{uv}$ (resp. $a_{uz}$ and $a_{vz}$) cannot be negative (resp. positive).
    \item[\textit{o)}] We have $(\gamma^{left}_{u} = -a_{uv}) > (\gamma^{right}_{u} = 0)$, $(\gamma^{left}_{v} = 0) > (\gamma^{right}_{v} = a_{uv} - a_{vz})$ and $(\gamma^{left}_{z} = 0) > (\gamma^{right}_{z} = -a_{vz})$. We see that $a_{uv}$ (resp. $a_{uz}$ and $a_{vz}$) cannot be negative (resp. positive).
    \item[\textit{p)}] We have $(\gamma^{left}_{u} = -a_{uv}) >  (\gamma^{right}_{u} = -a_{uz})$, $(\gamma^{left}_{v} = 0) > (\gamma^{right}_{v} = a_{uv} + a_{vz})$ and $(\gamma^{left}_{z} = -a_{vz}) > (\gamma^{right}_{z} = -a_{uz})$. We see that $a_{uv}$ (resp. $a_{uz}$ and $a_{vz}$) cannot be negative (resp. positive).
    \item[\textit{r)}] We have $(\gamma^{left}_{u} = 0) > (\gamma^{right}_{u} = -a_{uv} - a_{uz})$, $(\gamma^{left}_{v} = 0) > (\gamma^{right}_{v} = -a_{uv} - a_{vz})$ and $(\gamma^{left}_{z} = 0) > (\gamma^{right}_{z} = -a_{uz} - a_{vz})$. We see that $a_{uv}$, $a_{uz}$ and $a_{vz}$ must be positive.
\end{itemize}

\phantomsection\addcontentsline{toc}{section}{References}
\printbibliography

\end{document}